\providecommand{\keywords}[1]{\textbf{\textit{Keywords :}} #1} %command for keywords
\providecommand{\MSC}[1]{\textbf{\textit{MSC2020 :}} #1} %command for MSC2020
\title{Phase transitions for support recovery under local differential privacy}
\author[,1]{Cristina Butucea\thanks{Financial support from the French National Research Agency (ANR) under the grant Labex Ecodec (ANR-11-LABEX-0047)} }
\author[,2]{Amandine Dubois \thanks{Corresponding author. Financial support from GENES and from the French ANR grant ANR-18-EURE-0004} }
\author[2]{Adrien Saumard \thanks{Financial support from the French ANR grant ANR-18-EURE-0004}}
\affil[1]{CREST, ENSAE, Institut Polytechnique de Paris, 5 avenue Henry Le Chatelier, F-91120 Palaiseau.
cristina.butucea@ensae.fr}
\affil[2]{CREST, ENSAI, Campus de Ker-Lann - Rue Blaise Pascal - BP 37203 - 35172 BRUZ cedex. amandine.dubois@ensai.fr, adrien.saumard@ensai.fr}
\date{}
\begin{document}
\maketitle
\begin{abstract}
We address the problem of variable selection in a high-dimensional but sparse mean model, under the additional constraint that only privatised data  are  available for inference. The original data are vectors with independent entries having a symmetric, strongly log-concave distribution on $\Rb$.
For this purpose, we adopt a recent generalisation of classical minimax theory to the framework of local $\alpha-$differential privacy. We provide lower and upper bounds on the rate of convergence for the expected Hamming loss over classes of at most $s$-sparse vectors whose non-zero coordinates are separated from $0$ by a constant $a>0$.
As corollaries, we derive necessary and sufficient conditions (up to log factors) for exact recovery and
%nearly optimal by a factor $1/s$
for almost full recovery.
When we restrict our attention to non-interactive mechanisms that act independently on each coordinate our lower bound shows that, contrary to the non-private setting, both exact and almost full recovery are impossible whatever the value of $a$ in the high-dimensional regime such that $n \alpha^2/ d^2\lesssim 1$.
However, in the regime $n\alpha^2/d^2\gg \log(d)$ we can exhibit a critical value $a^*$ (up to a logarithmic factor) such that exact and almost full recovery are possible for all $a\gg a^*$ and impossible for $a\leq a^*$.
We show that these results can be improved when allowing for all non-interactive (that act globally on all coordinates) locally $\alpha-$differentially private mechanisms in the sense that phase transitions occur at lower levels.
\end{abstract}

\keywords{Hamming loss, Local differential privacy, Minimax rates, Phase transition, Support recovery, Variable selection, Strong log-concavity}

\MSC{62G05, 62G20}
\section{Introduction}

We consider the problem of distributed support recovery of the sparse mean of $n$ independent, identically distributed (i.i.d.) random vectors.
Precisely, for $i=1,\ldots,n$, the $i$th data holder observes a random vector $X^i=(X^{i}_j)_{j=1,\ldots,d}\in\Rb^d$ issued from a rescaled and shifted vector $\xi^i$: $X^i = \theta+ \sigma \xi^i$. The noise is supposed to have independent coordinates $\xi^i_j, \, j=1,...,d$ identically distributed with a symmetric and strongly log-concave distribution of variance 1 (see Section \ref{subsec:SLC} below for definition and details). Note that the standard Gaussian distribution belongs to our model, but the symmetric and strongly log-concave probability density functions form a large non-parametric class of functions.

The mean vector $\theta$ is assumed to be $(s,a)$-sparse in the sense that $\theta$ belongs to one of the following sets:
\begin{multline*}
\Theta_d^+(s,a)=\{\theta \in \Rb^d: \text{there exists a set } S\subseteq\{1,\ldots,d\} \text{ with at most } s \text{ elements }\\ \text{ such that }
\theta_j\geq a \text{ for all } j\in S, \text{ and } \theta_j=0 \text{ for all } j\notin S \},
\end{multline*}
or
\begin{multline*}
\Theta_d(s,a)=\{\theta \in \Rb^d: \text{there exists a set } S\subseteq\{1,\ldots,d\} \text{ with at most } s \text{ elements }\\ \text{ such that }
\vert\theta_j\vert \geq a \text{ for all } j\in S, \text{ and } \theta_j=0 \text{ for all } j\notin S \}.
\end{multline*}

\subsection{Differential Privacy}\label{Section Problem statement}

Nowadays, a large amount of data, such as internet browsing history, social media activity, location information from smartphones, or medical records, are collected and stored.
On the one hand, the analysis of these data can benefit to individuals, companies, or communities such as the scientific one.
For instance, companies can use data to improve their products and services, or health data can be used for medical research.
On the other hand, people are more and more concerned with the protection of their privacy and may be reluctant to share their sensitive data.
In this context, it seems essential to be able to understand the tradeoffs between the statistical utility of the collected data and the privacy of individuals from whom these data are obtained.
This requires a formal definition of privacy and differential privacy has been adopted by researchers in the computer science, machine learning, and statistics communities as a natural one.

Two kinds of differential privacy are discussed in the literature: central differential privacy which has been introduced by Dwork et al. in \cite{DworkMcSherryNissimSmith2006}, and local differential privacy.
We will focus in this paper on the second setting but we briefly discuss the difference between central and local privacy.
In both settings, $n$ individuals want their privacy to be preserved while their data, which will be denoted $X_1,\ldots,X_n$, are used for statistical analyses.
In the central setting, the $n$ data-holders share confidence in a common curator who has access to the original data $X_1,\ldots,X_n$ and use them to generate a private release $Z$.
In a nutshell, central differential privacy ensures that the probability of observing an output does not change much when a single data point of the original database is modified.
We refer to \cite{WassermanZhou2010differentialPrivacy} for the formal definition of differential privacy in the central setting.
In the local setting, data is privatized before it is shared with a data collector : for all $i\in\llbr 1,n\rrbr$, $X_i$ is transformed into a private data $Z_i$ directly on the $i$th individual's machine and the data collector or the statistician only have access to the private sample $Z_1,\ldots, Z_n$.
However, some interaction between the different data-holders is allowed.
Formally, the privatized data $Z_1,\ldots,Z_n$ are obtained by successively applying suitable Markov kernels :
given $X_i=x_i$ and $Z_1=z_1,\ldots,Z_{i-1}=z_{i-1}$, the $i$-th dataholder draws
\begin{equation*}
    Z_i \sim Q_i(\cdot \mid X_i=x_i, Z_1=z_1,\ldots,Z_{i-1}=z_{i-1})
\end{equation*}
for some Markov kernel $Q_i: \Zs \times \Xc \times \Zc^{i-1} \to [0,1]$ where the measure spaces of the non-private and private data are denoted with $(\Xc,\Xs)$ and $(\Zc,\Zs)$, respectively.
Such randomizations are known as sequentially interactive.
We say that the sequence of Markov kernels $(Q_i)_{i=1,\ldots,n}$ provides $\alpha$-local differential privacy or that $Z_1,\ldots,Z_n$ are $\alpha$-local differentially private views of $X_1,\ldots,X_n$ if
\begin{equation}\label{alphaLDPconstraint}
    \sup_{A \in \Zs} \frac{Q_i(A \mid X_i=x,Z_1=z_1,\ldots,Z_{i-1}=z_{i-1})}{Q_i(A \mid X_i=x^\prime,Z_1=z_1,\ldots,Z_{i-1}=z_{i-1})} \leq \exp(\alpha) \quad \forall i\in\llbr 1,n\rrbr, \, \forall x, x^\prime \in \Xc.
\end{equation}
In this paper, we will focus on the special case of non-interactive local differential privacy where $Z_i$ depends only on $X_i$ but not on $Z_k$ for $k<i$.
In this scenario, we have
$$
Z_i \sim Q_i(\cdot \mid X_i = x_i),
$$
and condition \eqref{alphaLDPconstraint} becomes
$$
\sup_{A \in \Zs} \frac{Q_i(A \mid X_i=x)}{Q_i(A \mid X_i=x^\prime)} \leq \exp(\alpha) \quad \forall i\in\llbr 1,n\rrbr, \, \forall x, x^\prime \in \Xc.
$$

The aim is that every data holder releases a private view $Z^{i}$ of $X^{i}$ such that the notion of local differential privacy is satisfied and that the support of $\theta$ can be estimated from the data $Z^{1},\ldots,Z^{n}$ in an optimal way.

{\bf Notation.} For two sequences $\{a_d\}_d$ and $\{b_d\}_d$ of non-negative real numbers, we write $a_d\lesssim b_d$ if there exists some constant $C>0$ such that $a_d\leq Cb_d$.
If $b_d >0$ we write $a_d\sim b_d$ if $a_d/b_d\rightarrow 1$ as $d\rightarrow \infty$ and we write $a_d \gg b_d$ if $a_d/b_d \rightarrow \infty$ as $d\rightarrow \infty$.
We recall that a centred Laplace distribution with parameter $\lambda>0$ has the probability density function defined by $f_\lambda(x)=\frac{1}{2\lambda}\exp\left(-\frac{\vert x\vert}{\lambda}\right)$ on $\Rb$.

\subsection{Motivation}

The problem of high-dimensional sparse vectors estimation has recently been studied in the framework of local differential privacy in \cite{DuchiJordanWainwright2018MinimaxOptimalProcedure}.
For the $1$-sparse mean estimation problem, the authors considered the set of distributions $P$ supported on $\Bb_\infty(r)$, i.e. the ball of  radius $r$ in $\Rb^d$ with respect to the sup norm $\|\cdot\|_\infty$, and having $\Vert \Eb_P[X]\Vert_0\leq 1$. They proved that the private minimax mean squared error for non-interactive $\alpha$-locally differentially private mechanisms is bounded from below by
$$
\min\left\{r^2,\frac{r^2d\log(2d)}{n(e^\alpha-1)^2}  \right\},
$$
proving that high-dimensional $1$-sparse mean estimation is impossible in this setting when both $r^2 \gtrsim 1$ and $r^2 d\log(2d) \gtrsim n(e^\alpha - 1)^2$. This result can be related to selecting the support of a 1-sparse mean vector of such a distribution $P$, under the same constraints. We generalize these results to symmetric and strongly log-concave distributions on the whole $\mathbb{R}^d$ and to arbitrary sparsity.

%In the case of Gaussian vectors, \cite{Aden_Ali_Ashtiani_Kamath_2020} gave upper bounds for the sample complexity of estimating both their mean and covariance without further constraints on these parameters. For sub-Gaussian vectors,

%In variable selection we look for the least value separating 0 from the non-zero coefficients of the mean vector such that it could be possible to estimate the support of the vector using one part of the private data.
%The estimator of the mean could then benefit from knowing the support and the reduced dimension to only $s$ coordinates and use the remaining private data.

Obvious applications of variable selection are the estimation of the set that supports the non-null coefficients in the mean vector, or the estimation of its size.
We propose to use our procedure to build a private mean estimator of $s-$sparse vectors in two steps: use one part of the sample to recover the support and the other part to estimate the mean values of the selected variables, that is a vector of reduced size. Moreover these results are a benchmark for working on more realistic models such as  high-dimensional linear regression and clustering of high-dimensional vectors, see \cite{Ndaoud_Tsybakov_2020_optimal_variable_selection} and \cite{Ndaoud_2018_Gaussian_clustering}.

\subsection{Strongly log-concave distributions}\label{subsec:SLC}
Log-concave measures play a significant role in many areas of pure and applied mathematics, such as convex geometry \cite{Guedon:12}, functional inequalities \cite{Bobkov00}, optimal transport theory \cite{Caff92,Caff00}, random matrix theory \cite{Adam10}, Monte-Carlo sampling \cite{Dalal17,Durmus17}, Bayesian inference \cite{paninski:04} or non-parametric estimation \cite{Cule_Samworth_10,DossWellner:2013,Han21}. The log-concavity assumption arises also naturally in various modelisation contexts, such as survival and reliability analysis \cite{Jones15} or econometrics \cite{Bagnoli:95}, since it possesses many interesting properties subject to interpretation, such as monotone likelihood ratio or non-decreasing hazard rate function for instance. For further applications and references, see \cite{Bagnoli:95,SauWell14}.

Let us now state the definitions related to log-concavity that will be in force in this article. A probability distribution $P$ on $\mathbb{R}$ is log-concave if it admits a density $p$ with respect to the Lebesgue measure, that writes $p=\exp(-\phi)$, with $\phi$ a convex function on $\mathbb{R}$. The function $\phi$ is called the potential of the density $p$ and of the probability measure $P$.

Furthermore, a function $\phi:\mathbb{R} \to \mathbb{R}$ is $c-$strongly convex for some constant $c>0$ if, for all $(x,y)\in\mathbb{R}^{2}$ and $t\in(0,1)$,
\begin{equation}
\phi(t x+(1-t)y)- \left[t\phi(x) + (1-t)\phi(y) \right] \leq-\frac{c}{2} t(1-t)(x-y)^{2}.\label{eq:SC}
\end{equation}
 Note that the parameter $c$ in \eqref{eq:SC} gives a positive lower bound on the \textit{curvature} of the convex function $\phi$. In the case where the function $\phi$ is two times differentiable, condition \eqref{eq:SC} indeed corresponds to a lower bound on the second derivative: $\inf_{x\in \mathbb{R}}\left\{ \phi^{\prime\prime}(x)\right\}\geq c>0$.

 A probability measure $P$ is said to be $c-$strongly log-concave if it admits a density function $p:\mathbb{R}\to (0, + \infty)$ which is $c-$strongly log-concave with potential $\phi$, in the sense that $p=\exp(-\phi)$ and $\phi$ is a $c-$strongly convex potential. This is equivalent to assuming that $p(x)=\exp(-\phi_0(x))\exp(-cx^2/2)$, for all $x\in \mathbb{R}$, with $\phi_0$ being a finite convex function.

We consider the problem of support recovery of the sparse mean $\theta$ of a random vector $X=\theta + \sigma\xi$ of distribution $P_{\theta}$, where $\xi$ has i.i.d. coordinates $\xi_j$, $j=1,...,d$, distributed according to a $c-$strongly log-concave distribution $P^{\xi_1}$ for some constant $c>0$, with unit variance and that is symmetric around zero.  As $\xi_1$ is assumed to be symmetric, this amounts to require that the $c-$strongly convex potential $\phi$ of $p$ is even, or again that $p(x)=\exp(-\phi_0(x))\exp(-cx^2/2)$, for all $x\in \mathbb{R}$, where $\phi_0$ is a finite even convex function.

When dealing with some minimax lower bounds in the sequel, we will need to assume that the normalized noise distribution $p$ is not too peaked around its mean, in the sense that its curvature is bounded from above. More precisely, we will assume in this case that $p=\exp(-\phi)$, where $\phi$ is a finite convex potential satisfying, for a constant $c_+>0$, for all $(x,y)\in\mathbb{R}^{2}$ and $t\in(0,1)$,
\begin{equation}
\phi(t x+(1-t)y) - \left[ t \phi(x) + (1-t)\phi(y) \right] \geq-\frac{c_{+}}{2} t (1-t)(x-y)^{2}.
\label{eq:upper_bound_hessian_intro}
\end{equation}
When the potential $\phi$ is two times differentiable, condition \eqref{eq:upper_bound_hessian_intro} can be equivalently formulated as an upper bound on the second derivative of $\phi$: $\sup_{x\in \mathbb{R}}\left\{ \phi^{\prime\prime}\right\}\leq c_+$.

Such framework provides a non-parametric generalization of the Gaussian assumption, where $\phi_0$ would be assumed to be a constant function and the unit variance of $\xi$ would correspond to the value $c=c_+=1$. Note that when $\xi$ is only assumed to be centered and strongly log-concave, with unit variance and scaling parameter $c$, we have in general $c\leq 1$ and the equality case $c=1$ characterizes the normal distribution $\mathcal{N}(0,1)$, see \cite{Hillion_Johnsone_Saumard_18}. Finally, let us denote $\Phi$ the cumulative distribution function of the normal distribution.

\subsection{Minimax framework}
Let $X^{i}$, $i=1,\ldots,n$ be i.i.d random vectors of $\Rb^d$ {with distribution $P_{\theta}$}. We assume
that the vectors $X^{i}=(X^{i}_j)_{j=1,\ldots,d}$ for $i=1,\ldots,n$ are observed by $n$ distinct data holders who refuse to share their respective observations.
The statistician does not have access to these data but only to $\alpha$-locally differentially private views $Z^{1},\ldots Z^{n}$.
We assume that $\theta$ belongs to one of the sets $\Theta_d^+(s,a)$ or $\Theta_d(s,a)$ introduced in Section \ref{Section Problem statement} and we study the problem of selecting the relevant components of $\theta$, that is, of estimating the vector
$$
\eta=\eta(P_\theta)=\left(I(\theta_j\neq 0)\right)_{j=1,\ldots,d},
$$
where $I(\cdot)$ is the indicator function.
Our goal is to estimate the vector $\eta$ by a \textit{selector} $\hat{\eta}$, that is a measurable function $\hat{\eta}=\hat{\eta}(Z^{1},\ldots, Z^{n})$ taking values in $\{0,1\}^d$, where $Z^{1},\ldots, Z^{n}$ are $\alpha$-locally differentially private views of $X^{1},\ldots, X^{n}$.
We judge the quality of a selector $\hat{\eta}$ as an estimator of $\eta$ by the Hamming loss between $\hat{\eta}$ and $\eta$ which counts the number of positions at which $\hat{\eta}$ and $\eta$ differ :
$$
\vert \hat{\eta}-\eta\vert := \sum_{j=1}^d \vert \hat{\eta}_j-\eta_j\vert=\sum_{j=1}^d I(\hat{\eta}_j\neq \eta_j).
$$
For the support recovery problem, we consider only $\alpha$-locally differentially private mechanisms which transform  each $X^{i}\in\Rb^d$ into a  private release $Z^{i}$ taking also values in $\Rb^d$, that are known as non-interactive privacy mechanisms. However, we distinguish between privacy mechanisms that act on each coordinate of $X^i$ either separately, locally or globally.
More specifically, we will consider the two following scenarios:
\begin{itemize}
    \item {\bf Coordinate Local (CL) Privacy Mechanisms }: there is a sequence $Q=(Q^i)_{i=1,\ldots,n}$ of Markov kernels providing $\alpha$-local differential privacy such that $Z^i \sim Q^i(\cdot \mid X^i = x^i)$ for all $i\in\llbr 1,n\rrbr$, and $Q^i$ is obtained as product of coordinate-wise kernels as follows:
    $$
    \text{for all } i\in\llbr 1,n\rrbr \text{ and all } j\in\llbr 1,d\rrbr, Z^{i}_j\sim Q^{i}_j(\cdot \mid X^{i}_j=x)
    $$ for some $(\alpha/d)$-differentially private mechanism $Q^{i}_j$.
    We denote by ${\Qc}^{CL}_{\alpha}$ the set of all privacy mechanisms $Q=(Q^1,\ldots,Q^n)$ satisfying these assumptions.
    \item {\bf Coordinate Global (CG) Privacy Mechanisms }: there is a sequence $Q=(Q^i)_{i=1,\ldots,n}$ of Markov kernels providing $\alpha$-local differential privacy such that $Z^i \sim Q^i(\cdot \mid X^i = x^i)$ for all $i\in\llbr 1,n\rrbr$.
    We denote by $\Qc_{\alpha}$ the set of all privacy mechanisms $Q=(Q^1,\ldots,Q^n)$ satisfying this assumption.
\end{itemize}
In other words, in the Coordinate Local case, we consider only non-interactive $\alpha$-locally differentially private mechanisms that act coordinates by coordinates.
This scenario is easier to study than the second one for which any non-interactive $\alpha$-locally differentially private mechanism is allowed to be used.

For both scenarios, if $P_\theta$ denotes the distribution of  $X^{i}$ then we denote by $Q^{i}P_\theta$ the distribution of  $Z^{i}$.
Since the distribution of $(X^1,\ldots,X^n)$ is $P_\theta^{\otimes n}$, the distribution of $(Z^1,\ldots,Z^n)$ will be denoted by $Q(P_\theta^{\otimes n})$.
In the Coordinate Local case, we denote by $P_{\theta_j}$ the distribution of  $X^{i}_j$ and by $Q^{i}_jP_{\theta_j}$ the distribution of  $Z^{i}_j$.

We say that a selector $\hat{\eta}=(\hat{\eta}_1, \ldots, \hat{\eta}_d)$ is  \textit{separable} if for all $j=1,\ldots,d$ its $j$th component $\hat{\eta}_j$ depends only on $(Z^{i}_j)_{i=1,\ldots,n}$ .
We  denote by $\Tc$ the set of all separable selectors.
We are interested in the study of the following private minimax risks
\begin{equation}\label{MinimaxRisk scenario 1}
{\Rc}^{CL}_n(\alpha, \Theta)=\inf_{Q\in {\Qc}^{CL}_{\alpha}}\inf_{\hat{\eta}=\hat{\eta}(Z^{1},\ldots,Z^{n})\in\Tc}\sup_{\theta\in\Theta}\frac{1}{s}\Eb_{Q(P_\theta^{\otimes n})}\vert\hat{\eta}(Z^{1},\ldots,Z^{n})-\eta\vert,
\end{equation}
in the coordinate local case, and
\begin{equation}\label{MinimaxRisk scenario 2}
\Rc_n(\alpha, \Theta)=\inf_{Q\in \Qc_{\alpha}}\inf_{\hat{\eta}=\hat{\eta}(Z^{1},\ldots,Z^{n})\in\Tc}\sup_{\theta\in\Theta}\frac{1}{s}\Eb_{Q(P_\theta^{\otimes n})}\vert\hat{\eta}(Z^{1},\ldots,Z^{n})-\eta\vert,
\end{equation}
in the coordinate global case, for $\Theta=\Theta_d^+(s,a)$ and $\Theta=\Theta_d(s,a)$.

We are interested in the study of two asymptotic properties : \textit{almost full recovery} and \textit{exact recovery}, that we define here.
Let $(\Theta_d^+(s_d,a_d))_{d\geq 1}$ be a sequence of classes of sparse vectors.
We will say that \textit{almost full recovery is possible} for $(\Theta_d^+(s_d,a_d))_{d\geq 1}$ in the Coordinate Local case if there exists $Q\in {\Qc}^{CL}_\alpha$ and a selector $\hat{\eta}$ such that
$$
\lim_{d\rightarrow \infty} \sup_{\theta\in\Theta_d^+(s_d,a_d)}\frac{1}{s_d}\Eb_{Q(P_\theta^{\otimes n})}\vert\hat{\eta}-\eta\vert=0.
$$
We will say that \textit{almost full recovery is impossible} for $(\Theta_d^+(s_d,a_d))_{d\geq 1}$ in the Coordinate Local case if
$$
\liminf_{d\rightarrow +\infty} \inf_{Q\in {\Qc}^{CL}_{\alpha}}\inf_{\hat{\eta}=\hat{\eta}(Z^{1},\ldots,Z^{n})\in\Tc}\sup_{\theta\in\Theta_d^+(s,a)}\frac{1}{s_d}\Eb_{Q(P_\theta^{\otimes n})}\vert\hat{\eta}-\eta\vert >0.
$$
We will say that \textit{exact recovery is possible} for $(\Theta_d^+(s_d,a_d))_{d\geq 1}$ in the Coordinate Local case if there exists $Q\in{\Qc}^{CL}_\alpha$ and a selector $\hat{\eta}$ such that
$$
\lim_{d\rightarrow \infty} \sup_{\theta\in\Theta_d^+(s_d,a_d)}\Eb_{Q(P_\theta^{\otimes n})}\vert\hat{\eta}-\eta\vert=0.
$$
We will say that \textit{exact recovery is impossible} for $(\Theta_d^+(s_d,a_d))_{d\geq 1}$ in the Coordinate Local case if
$$
\liminf_{d\rightarrow +\infty} \inf_{Q\in {\Qc}^{CL}_{\alpha}}\inf_{\hat{\eta}=\hat{\eta}(Z^{1},\ldots,Z^{n})\in\Tc}\sup_{\theta\in\Theta_d^+(s,a)}\Eb_{Q(P_\theta^{\otimes n})}\vert\hat{\eta}-\eta\vert >0.
$$
We use similar definitions in the Coordinate Global case with ${\Qc}^{CL}_\alpha$ replaced by $\Qc_\alpha$.

\subsection{Related work}

Variable selection with Hamming loss in the Gaussian mean model in $\Rb^d$ has been studied in the non-private setting in \cite{butucea2018variableSelectionHammingLoss}.
The authors provide non-asymptotic lower and upper bounds on the non-private version of minimax risk \eqref{MinimaxRisk scenario 1}.
As corollaries, they derive necessary and sufficient conditions for almost full recovery and exact recovery to be possible.  If $s,\, d \to \infty$ such that $s/d \to 0$, they highlight a critical value $a^*=(\sigma/\sqrt{n})\sqrt{2\log(d/s-1)}(1+\delta)$ for a specific sequence $\delta = \delta(d,s) \to 0$ such that almost full recovery is possible for $a\geq a^*$ and impossible for $a<a^*$.
Similar results have been obtained for exact recovery with the greater critical value $a^*=(\sigma/\sqrt{n})(\sqrt{2\log(d-s)}+\sqrt{2\log s})$.
In the present paper, we will see how these results are affected by the privacy constraints.

For estimating the $1-$sparse mean of high-dimensional vectors with distribution supported on a compact support it is known that the rates are deteriorates by a factor $d$ under local differential privacy, see \cite{DuchiJordanWainwright2018MinimaxOptimalProcedure}. Under a relaxation of central differential privacy called  $(\alpha,\delta)-$approximate differential privacy -  see for instance \cite{Dwork2006ourData_ourselves}, \cite{Aden_Ali_Ashtiani_Kamath_2020} and \cite{Biswas_Dong_Kamath_Ullman_2020} have provided estimators of the mean and the covariance of high-dimensional Gaussian vectors and theoretical guarantees that do not require additional assumptions on the parameters. In some regimes the rates are not deteriorated and it is therefore difficult to anticipate the role of privacy on each particular problem.

A few papers tackle a slightly different selection problem under privacy constraints mostly under central differential privacy constraints. They are interested in the largest sum of $k$ coordinates of the common mean value $\theta$ of a vector supported on $\{0,1 \}^d$. We are mainly interested in recovering the position of significant coordinates in the $s-$sparse mean vector $\theta$. \\
In \cite{Steinke_Ullman_2017_GDPselection}, the authors study top-$k$ selection under a relaxation of central differential privacy called $({\alpha},\delta)$-approximate differential privacy. However, they use a weighted Hamming loss as described below.
Precisely, if $X_1,\ldots,X_n$ are drawn i.i.d. from some distribution $P$
on $\{0,1\}^d$, they want to find the $k$ greatest coordinates of the mean vector $\theta=\Eb_P[X_1]$ while respecting $({\alpha},\delta)$-differential privacy constraints.
They prove the existence of a $(1,1/(nd))$-differentially private mechanism that outputs $Z\in\{0,1\}^d$ with $k$ non zero coordinates such that
$$
\Eb\left[\sum_{j=1}^d \theta_j {I}(Z_j=1)\right]\geq \max_{\eta\in\{0,1\}^d : \Vert \eta\Vert_1=k}\sum_{j=1}^d\theta_j {I}(\eta_j=1)-\beta
$$
requires $n\gtrsim \sqrt{k}\log d$ samples in the low accuracy regime where $\beta=k/10$.
Moreover, repeated use of the classical exponential mechanism solves this problem with $n=O(\sqrt{k}\log d)$ samples.
In \cite{Bafna_Ullman_2017_PriceGDPselection}, the authors study an empirical version of the problem studied in \cite{Steinke_Ullman_2017_GDPselection}: they want to find the top-k coordinates of the vector $q\in\Rb^d$ defined by $q_j=(1/n)\sum_{i=1}^nX_{i,j}$, $j=1,\ldots,d$ while respecting $({\alpha}, \delta)$-differential privacy constraints.
Let $\tau$ be the $k$-th largest value among the coordinates $\{q_1,\ldots,q_k\}$.
They prove the existence of a $({\alpha},\delta)$-differentially private mechanism that outputs a set $S\subset\llbr 1,d\rrbr$ of $k$ elements such that $q_j\geq \tau-\beta$ for all $j\in S$ requires $n\gtrsim k\log(d)$ samples in the high-accuracy regime where $\beta \asymp \sqrt{\log d/n}$.
In \cite{Ullman_2018_LDPselection}, the author studies the same problem as \cite{Steinke_Ullman_2017_GDPselection} for $k=1$ under non-interactive $\alpha$-local differential privacy constraints. If we consider the low-accuracy regime considered by \cite{Steinke_Ullman_2017_GDPselection}, this result shows that estimating the largest coordinate of a 1-sparse mean $\theta$ under non-interactive $\alpha$-local differential privacy requires $n\gtrsim d\log d/\alpha^2$ samples, which is by a factor $d$ larger than in the central model of $({\alpha},\delta)$-approximate differential privacy.

\subsection{Description of results}

We address the problem of variable selection in a symmetric, strongly log-concave model in $\Rb^d$ under local differential privacy constraints.
We provide lower and upper bounds on the rate of convergence for the expected Hamming loss over classes of at most $s$-sparse vectors whose non-zero coordinates are separated from $0$ by a constant $a>0$.

When we restrict our attention to non-interactive mechanisms that act independently on each coordinate ({\it coordinate local privacy mechanisms}) we have proved that, contrary to the non-private setting, almost full recovery and exact recovery are impossible whatever the value of $a$ in the high-dimensional regime when $n \alpha^2 \lesssim d^2$. This is due to the fact that the loss of information due to privacy may reduce the effective sample size $N:= n \alpha^2/d^2$ under the value 1, and this does not allow support recovery neither exact nor almost full. This result is significantly different from the non-private case where \cite{butucea2018variableSelectionHammingLoss} shows that variable selection is always possible, even for $n=1$ observation for significant enough mean value $a$.\\
However, in the regime $n\alpha^2/d^2\gg \log(d)$ we exhibit a critical value $a^*$ (up to a logarithmic factor) such that exact recovery is possible for all $a\gg a^*$ and impossible for all $a\leq a^*$.
We also prove that these results can be improved when allowing for all non-interactive locally differentially private mechanisms, that we also call {\it coordinate global}. The effective sample size is $Nd$ in this case and it is larger than $N$.\\
Let us note that the separable selectors that we propose are free of the sparsity parameter $s$. They depend on $a$ and methods could be made adaptive to $a$, but this is beyond the scope of this work.\\
For many estimation problems, allowing for sequentially interactive privacy mechanisms, that randomize each vector $X_i$ by using also the publicly available information $Z_1,...,Z_{i-1}$, $i=2,...,n$,  does not improve substantially over non-interactive minimax rates.
This includes for instance density estimation \cite{Butucea_Dubois_Kroll_Saumard}, one-dimensional mean estimation \cite{DuchiJordanWainwright2018MinimaxOptimalProcedure}, and estimation of a linear functional of the true distribution \cite{Rohde_Steinberger_2020_geometrizing}.
However, for some estimation problems (see for instance the estimation of the integrated square of a density, \cite{ButuceaRohdeSteinberger2020quadraticFunctional}) and some testing problems (see \cite{Berrett_Butucea_2020_testing_discrete_distrib} and \cite{ButuceaRohdeSteinberger2020quadraticFunctional}) allowing for sequentially interaction between data-holders can substantially improve over non-interactive minimax rates of estimation or non-interactive minimax rates of testing.
We consider here only non-interactive privacy mechanisms for each vector $X_i$, but we conjecture that the exact and almost full recovery would be improved for interactive privacy mechanisms. It is left for future work to study whether that is indeed the case.\\
%The selectors we proposed depend on the parameter $a$ which is unknown in practice and on $s$ for almost full recovery. Future possible work could then consist in exhibiting adaptive optimal selectors.
%Future possible work could also consist in exhibiting optimal selectors that are adaptive to the parameters $a$ and $s$.

The paper is organised as follows. In Section \ref{Section Scenario 1}, we study the minimax risk \eqref{MinimaxRisk scenario 1}.
We first provide a lower bound  which enables us to derive necessary conditions for almost full recovery and exact recovery to be possible in the case where only coordinate local privacy mechanisms are used.
In particular, we prove that almost full recovery is impossible in this case as soon as the quantity $n\alpha^2/d^2$ is bounded from above.
We then provide non-asymptotic upper bounds on the minimax risks in propositions and state more explicit asymptotic sufficient conditions for almost full recovery and exact recovery to be possible in our corollaries.
These conditions and associated results are summarised in Table \ref{Table Exact recovery scenario 1}.
In Section \ref{Section Scenario 2}, we study the minimax risk \eqref{MinimaxRisk scenario 2} and prove that the results of Section \ref{Section Scenario 1} can be improved when any non-interactive (coordinate global) $\alpha$-locally differentially private mechanism is allowed.
See Table \ref{Table Exact recovery scenario 2} for a summary of these results.
Detailed proofs can be found in the Appendix.

\begin{table}
\centering
\begin{tabular}{|l|l|p{4cm}|p{2,8cm}|}
    \hline
        & $a\lesssim \frac{\sigma}{\sqrt{N}}$ & $\frac{\sigma}{\sqrt{N}}\ll a\leq 2\sigma$  & $a\geq 2 \sigma$\\
    \hline
    $N:=\frac{n\alpha^2}{d^2} \lesssim 1$ & impossible & impossible & impossible\\
    \hline
    $N:=\frac{n\alpha^2}{d^2} \gg 1$ & impossible &
    \begin{tabular}{l}
          possible, as soon as \\
          \\
         $a \gg  \frac{\sigma}{\sqrt{N}} \sqrt{ \log(d) } $
         \\
         \\
         if moreover \\
         \\
         $N \gg { \log\left(d \right)} $
    \end{tabular}
    &
    \begin{tabular}{l}
          possible, if \\
           \\
          $\frac{\log(d)}{N} \lesssim 1$
    \end{tabular}
     \\
    \hline
\end{tabular}
\caption{Exact recovery of $\theta$ in either $\Theta^+_d(s,a)$ or $\Theta_d(s,a)$ in the Coordinate Local case.  Similar results hold for almost full recovery with $\log(d)$ replaced by $\log(d/s)$.
}
\label{Table Exact recovery scenario 1}
\end{table}

\begin{table}
\centering
\begin{tabular}{|l|l|p{3,7cm}|p{3,7cm}|}
    \hline
        & $a\lesssim \sigma\sqrt{\frac{\log d}{Nd}}$ & $\sigma\sqrt{\frac{\log d}{Nd}}\ll a\leq 2\sigma$  & $a\geq 2 \sigma$\\
    \hline
    $\frac{Nd}{\log d} \lesssim 1$ & impossible & impossible &
    \begin{tabular}{l}
    impossible if\\
    $a \leq \sigma\sqrt{\log\left(1+\frac{\log d}{16Nd} \right)} $\\
    \\
    \end{tabular}
    \\
    \hline
    $\frac{Nd}{\log d} \gg 1$ &
    impossible
%    \begin{tabular}{l}
 %         impossible if \\
 %         \\
 %         $a \lesssim \frac {\sigma}{s} \sqrt{\frac{\log d}{Nd}}$ \\
 %         \\
 %         \\
 %         \\
 %   \end{tabular}
 &
    \begin{tabular}{l}
          possible, as soon as \\
          \\
         $a \gg  \sigma\sqrt{\frac{\log d}{Nd}}  $,
         \\
         \\
         if moreover \\
         \\
         $Nd \gg { \log(d)} $
    \end{tabular}
    &
    \begin{tabular}{l}
          possible
    \end{tabular}
     \\
    \hline
\end{tabular}
\caption{Exact recovery of $\theta$ in either $\Theta^+_d(s,a)$ or $\Theta_d(s,a)$ in the Coordinate Global case. We have set $N=n\alpha^2/d^2$ for a better comparison with the Coordinate Local case. }
\label{Table Exact recovery scenario 2}
\end{table}

\section{Coordinate local non-interactive privacy mechanisms}\label{Section Scenario 1}

In this section, we provide a lower bound on the private minimax risk \eqref{MinimaxRisk scenario 1}.
This enables us to obtain necessary conditions for almost full recovery and exact recovery to be possible in the Coordinate Local scenario.
In particular, we prove that almost full recovery is impossible in the private setting of the Coordinate Local case if the quantity $N:= n\alpha^2/d^2$ is bounded from above.
We then provide upper bounds on the minimax risk that entail sufficient conditions for almost full recovery and exact recovery to be possible.

\subsection{Lower bound}

We first state our lower bound.

\begin{theorem}\label{Thrm Lower Bound Scenario 1}
Assume that the measure $P^{\xi_1}$ of the noise coordinates, is log-concave with a density $p=\exp(-\phi)$, where the potential $\phi$ has a curvature bounded from above by a constant $c_+>0$, that satisfies inequality \eqref{eq:upper_bound_hessian_intro}.
%and satisfying for some constant $c_+>0$, for all $(x,y)\in\mathbb{R}^{2}$ and $t\in(0,1)$,
%\begin{equation}
%\phi(t x+(1-t)y) - \left[ t \phi(x) + (1-t)\phi(y) \right] \geq-\frac{c_{+}}{2} t %(1-t)(x-y)^{2}.
%\label{eq:upper_bound_hessian_th}
%\end{equation}\textcolor{blue}{déjà donné en (3)}
%where $\phi$ is a finite strongly convex potential of the noise coordinates $\xi_i$ (see %Section \ref{subsec:SLC}).
Then for any $a>0$, $\alpha>0$,  $1\leq s\leq d$, $n\geq 1$, we have
\begin{equation}\label{Eq Lower Bound Scenario 1}
  \Rc^{CL}_n(\alpha, \Theta_d^+(s,a)) \geq \left(1-\frac{s}{d}\right)\exp\left( -4n(e^{\alpha/d}-1)^2\min\left\{\frac{c_+ a^2}{4\sigma^2},1 \right\}\right).
\end{equation}
\end{theorem}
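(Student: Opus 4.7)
\noindent My plan is to reduce the minimax problem to $d$ independent coordinate-wise binary tests via a Bayesian prior, and then apply a privacy-aware contraction inequality together with the strong log-concavity of the noise to control the divergences that govern these tests. The key structural fact I will exploit is that, in the Coordinate Local scenario with separable selectors and independent noise coordinates, the sub-problems for different coordinates fully decouple: $\hat{\eta}_j$ is a function of $(Z^{i}_j)_{i=1,\ldots,n}$ only, and $(Z^{i}_j)_{i=1,\ldots,n}$ is an $(\alpha/d)$-locally differentially private view of the one-dimensional i.i.d.\ sample $(X^{i}_j)_{i=1,\ldots,n}$ with marginal $P_{\theta_j}$.

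First, I would lower bound the supremum over $\Theta_d^+(s,a)$ by the Bayes risk under the prior $\pi$ that picks a subset $S\subseteq\{1,\ldots,d\}$ of cardinality exactly $s$ uniformly at random and sets $\theta = a\,\mathbf{1}_S$. Marginally each $\eta_j$ is Bernoulli with parameter $s/d$, and conditionally on $\eta_j$ the law of $(Z^{i}_j)_{i=1}^n$ is a fixed product measure, so the Hamming risk splits coordinate-wise:
$$\Eb_{\pi}\Eb_{Q(P_\theta^{\otimes n})}|\hat{\eta}-\eta| \;=\; \sum_{j=1}^d \bigl[(1-s/d)\,\alpha_j + (s/d)\,\beta_j\bigr],$$
where $\alpha_j$ and $\beta_j$ are the type-I and type-II errors of $\hat{\eta}_j$ for the binary test $H_0:\theta_j=0$ vs.\ $H_1:\theta_j=a$. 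I would then lower bound each summand by combining the elementary convex identity
$$(1-s/d)\,\alpha_j + (s/d)\,\beta_j \;\geq\; (1-s/d)(s/d)\,(\alpha_j+\beta_j),$$
with a Bretagnolle--Huber testing inequality of the form $\alpha_j+\beta_j \gtrsim \exp(-K_j)$, where $K_j = \mathrm{KL}\bigl(Q(P_0^{\otimes n})\,\Vert\, Q(P_a^{\otimes n})\bigr)$ is the KL between the $n$-sample privatized laws on coordinate $j$. Summing over $j$ and dividing by $s$ yields exactly the $(1-s/d)$ prefactor stated in the theorem.

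It remains to upper bound $K_j$, and this is where I combine the privacy constraint with the log-concavity hypothesis. Non-interactivity and $(\alpha/d)$-local privacy let me apply the Duchi--Jordan--Wainwright contraction $\mathrm{KL}(QP_0\Vert QP_a)\leq 4(e^{\alpha/d}-1)^2\, TV(P_0,P_a)^2$, and tensorization over the $n$ i.i.d.\ samples gives $K_j \leq 4n(e^{\alpha/d}-1)^2\, TV(P_0,P_a)^2$. The main obstacle I anticipate is producing a dimensionless Hellinger (hence TV) bound using only the upper-curvature hypothesis \eqref{eq:upper_bound_hessian_intro} on the potential $\phi$ of $P^{\xi_1}$, without assuming a parametric form. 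For this I would use the midpoint Taylor estimate
$$\phi(x) + \phi(x - a/\sigma) \;\leq\; 2\,\phi\!\bigl(x - a/(2\sigma)\bigr) + c_+\, a^2/(4\sigma^2),$$
which follows from applying $c_+$-smoothness of $\phi$ at $x-a/(2\sigma)$ in both directions. Exponentiating and integrating gives $\int \sqrt{p_0 p_a}\geq \exp(-c_+ a^2/(8\sigma^2))$, hence $H^2(P_0,P_a)\leq 2 - 2\exp(-c_+ a^2/(8\sigma^2))\leq c_+a^2/(4\sigma^2)$ by $1-e^{-t}\leq t$. Combining with the trivial $TV^2\leq 1$ produces the $\min\{c_+a^2/(4\sigma^2),1\}$ in the exponent, and assembling the three steps delivers the claimed lower bound.
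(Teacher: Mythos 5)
Your overall strategy is the same as the paper's: pass to a uniform prior over exactly-$s$-sparse vectors, decouple the Hamming risk into $d$ coordinate-wise binary tests thanks to the coordinate-local structure and separable selectors, apply the Duchi--Jordan--Wainwright contraction at privacy level $\alpha/d$ to bound the per-coordinate KL by $4n(e^{\alpha/d}-1)^2\,TV(P_0,P_a)^2$, and finally bound $TV(P_0,P_a)$ using the upper-curvature hypothesis \eqref{eq:upper_bound_hessian_intro}. Two places differ from the paper, and one of them is a genuine gap in matching the stated constant.

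The gap is in the testing step. You first push the weighted Bayes error through the crude convexity bound $(1-s/d)\alpha_j+(s/d)\beta_j\geq(1-s/d)(s/d)(\alpha_j+\beta_j)$ and then invoke Bretagnolle--Huber, $\alpha_j+\beta_j\gtrsim\exp(-K_j)$, which in its standard form carries a factor $1/2$ (indeed the hidden constant is exactly $1/2$, coming from $\int\min\{p,q\}\,d\mu\geq\tfrac12(\int\sqrt{pq}\,d\mu)^2$). Assembling your steps therefore gives $\tfrac12(1-s/d)\exp(-\cdot)$, not the prefactor $(1-s/d)$ claimed in \eqref{Eq Lower Bound Scenario 1}; your statement that the argument ``yields exactly the $(1-s/d)$ prefactor'' is not correct as written. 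The paper avoids this loss by using the Bayesian Neyman--Pearson characterisation $\inf_T[(1-s/d)\alpha(T)+(s/d)\beta(T)]=\int\min\{(1-s/d)m_0,(s/d)m_a\}\,d\mu$ together with a \emph{weighted} version of the Tsybakov lemma (Lemma \ref{LBlemma1}), namely $\int\min\{bp,cq\}\,d\mu\geq\frac{bc}{b+c}(\int\sqrt{pq}\,d\mu)^2$. With $b=1-s/d$, $c=s/d$ this gives $(1-s/d)(s/d)(\int\sqrt{m_0m_a})^2$ directly, with no lost factor of $2$; summing over $j$ and dividing by $s$ then produces $(1-s/d)$ exactly. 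To repair your argument, replace the detour through $\alpha_j+\beta_j$ by a direct lower bound on the weighted minimum of densities via such a weighted lemma.

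Your treatment of the log-concavity step, by contrast, is correct and a genuine alternative to the paper's. The paper bounds $TV(P_0,P_a)\leq\sqrt{\KL(P_0,P_a)/2}$ (Pinsker) and then shows $\KL(P_0,P_a)\leq c_+a^2/(2\sigma^2)$ (Lemma \ref{lem:Bound_KL}), whereas you bound $TV^2\leq H^2(P_0,P_a)=2-2\int\sqrt{p_0p_a}$ and control the Hellinger affinity by a midpoint Taylor estimate: using $c_+$-smoothness at $m=x-a/(2\sigma)$ in both directions gives $\phi(x)+\phi(x-a/\sigma)\leq2\phi(m)+c_+a^2/(4\sigma^2)$, hence $\int\sqrt{p_0p_a}\geq\exp(-c_+a^2/(8\sigma^2))$ by a change of variables, hence $TV^2\leq H^2\leq c_+a^2/(4\sigma^2)$. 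Both routes deliver the same bound $\min\{c_+a^2/(4\sigma^2),1\}$; your Hellinger version is arguably slightly cleaner in that it avoids Pinsker entirely, and it is also consonant with the paper's Lemma \ref{lem:bound_chi2} (the $\chi^2$ analogue) used in Section \ref{Section Scenario 2}.
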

%\textcolor{red}{Note that in this theorem we do not require that $\phi$ is (strongly) convex, but rather that it has a curvature that is bounded from above by some constant $c_+>0$.[ToDo: comments on regularity and covergence toward zero at infinity that can be avoid with log-concavity]
%}
The proof of Theorem \ref{Thrm Lower Bound Scenario 1} can be found in Appendix \ref{App Proof Lower bound Scenario 1}.
Some auxiliary results used for the proof of Theorem \ref{Thrm Lower Bound Scenario 1} can be found in Appendix \ref{App Auxialiary Results Lower bound scenario 1}.
Note that since $\Theta_d^+(s,a)\subset \Theta_d(s,a)$ we have $ \Rc^{CL}_n(\alpha, \Theta_d^+(s,a)) \leq  \Rc^{CL}_n(\alpha, \Theta_d(s,a))$, thus the right hand side of \eqref{Eq Lower Bound Scenario 1} is also a lower bound for $\Rc^{CL}_n(\alpha, \Theta_d(s,a))$.

A careful look at the proof of Theorem \ref{Thrm Lower Bound Scenario 1} shows that log-concavity is in fact not needed in the previous result, if we assume the existence of a positive density, converging to zero at infinity, and with a two times continuously differentiable potential achieving \eqref{eq:upper_bound_hessian_intro}.

For better confidentiality in practice, the parameter $\alpha$ must not be too large.
In particular, we assume that $\alpha/d \rightarrow 0$ when $d\rightarrow +\infty$.
We thus have $n(e^{\alpha/d}-1)^2 \sim n\alpha^2/d^2$ and Theorem \ref{Thrm Lower Bound Scenario 1} immediately shows the following.

\begin{corollary}\label{Coro coordinate local Lower bound}
Grant assumptions of Theorem \ref{Prop Upper Bound on the risk for large a Scenario 1}. Let $\alpha>0$,  $1\leq s\leq d$, $n\geq 1$ be such that $s/d\leq C_0$ for some constant $C_0\in (0,1)$, and $\alpha/d\rightarrow 0$ when $d\rightarrow \infty$.
Then, if $n\alpha^2/d^2\leq C_1$ for some constant $C_1>0$ or if $n\alpha^2/d^2\rightarrow \infty$ as $d\rightarrow \infty$ and $a^2\leq C_2 \sigma^2 d^2/n\alpha^2$ for some constant $C_2>0$ depending only on $c_+$, it holds
$$
\Rc^{CL}_n(\alpha, \Theta) \geq C
$$
for some constant $C>0$, where $\Theta=\Theta_d^+(s,a)$ or $\Theta=\Theta_d(s,a)$.
\end{corollary}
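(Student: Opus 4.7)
The plan is to apply Theorem \ref{Thrm Lower Bound Scenario 1} directly and show that under each of the two stated regimes the exponent in the right-hand side of \eqref{Eq Lower Bound Scenario 1} is bounded by a universal constant, so the whole lower bound stays bounded away from zero.

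First I would handle the prefactor: the assumption $s/d\leq C_0<1$ gives $(1-s/d)\geq 1-C_0>0$, so it suffices to control the exponent
\[
E_{n,d} := 4n(e^{\alpha/d}-1)^2\min\left\{\frac{c_+ a^2}{4\sigma^2},\,1\right\}.
\]
Since $\alpha/d\to 0$, a Taylor expansion yields $e^{\alpha/d}-1 = (\alpha/d)(1+o(1))$, hence $n(e^{\alpha/d}-1)^2\leq 2\, n\alpha^2/d^2$ for $d$ large enough. I would isolate this once and for all so that the two regimes reduce to bounding $n\alpha^2/d^2$ times the $\min$ term.

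In the first regime, $n\alpha^2/d^2\leq C_1$, I would simply use $\min\{c_+a^2/(4\sigma^2),1\}\leq 1$ to get $E_{n,d}\leq 8C_1$. In the second regime, $n\alpha^2/d^2\to\infty$ and $a^2\leq C_2\sigma^2 d^2/(n\alpha^2)$, I would instead use $\min\{c_+a^2/(4\sigma^2),1\}\leq c_+a^2/(4\sigma^2)$, so that
\[
E_{n,d}\;\leq\; 2\cdot\frac{n\alpha^2}{d^2}\cdot\frac{c_+ a^2}{\sigma^2}\;\leq\; 2c_+ C_2,
\]
where the last inequality uses $n\alpha^2/d^2\cdot a^2/\sigma^2\leq C_2$ by the hypothesis on $a^2$. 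In either case $E_{n,d}\leq K$ for a constant $K$ depending only on $c_+,C_1,C_2$, and plugging into \eqref{Eq Lower Bound Scenario 1} yields
\[
\Rc^{CL}_n(\alpha,\Theta_d^+(s,a))\;\geq\;(1-C_0)e^{-K}\;=:\;C>0.
\]

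The statement for $\Theta=\Theta_d(s,a)$ would then follow from the inclusion $\Theta_d^+(s,a)\subset \Theta_d(s,a)$ already noted after the theorem, which gives $\Rc^{CL}_n(\alpha,\Theta_d^+(s,a))\leq \Rc^{CL}_n(\alpha,\Theta_d(s,a))$. There is no genuine obstacle: the only care needed is in making the $o(1)$ from $e^{\alpha/d}-1\sim \alpha/d$ quantitative (any absolute upper bound like the factor $2$ used above works), and in choosing $C_2$ depending on $c_+$ so the two bounds on the $\min$ can be combined cleanly.
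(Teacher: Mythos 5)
Your proof is correct and follows exactly the route the paper intends: the authors simply note that $\alpha/d\to 0$ gives $n(e^{\alpha/d}-1)^2\sim n\alpha^2/d^2$ and say the corollary then "immediately" follows from Theorem \ref{Thrm Lower Bound Scenario 1}, which is precisely what you make explicit by bounding the exponent $E_{n,d}$ by a constant in each regime and using $\Theta_d^+(s,a)\subset\Theta_d(s,a)$ for the second class.
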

Corollary \ref{Coro coordinate local Lower bound} shows that almost full recovery is impossible under local differential privacy constraints if the quantity $n\alpha^2/d^2$ is bounded from above.
In particular, almost full recovery is impossible under local differential privacy constraints in the high-dimensional setting, that is when $n\leq d$, whatever the value of $a$.
Corollary \ref{Coro coordinate local Lower bound} also proves that if $n\alpha^2/d^2 \rightarrow +\infty$ then almost full recovery is impossible if $a\lesssim \sigma d/\sqrt{n\alpha^2}$.

This underlines a strong difference between the private setting and the classical setting, since \cite{butucea2018variableSelectionHammingLoss} proved that in the non-private setting almost full recovery is possible for values of $|a|$ large enough, even if $n=1$. However, both almost full and exact recovery are impossible for any signal value $a$ when the effective size $N=n\alpha^2/d^2 \lesssim 1$  under privacy constraints.

\subsection{Privacy mechanism}\label{Subsection Privacy Mechanism scenario 1}

In this section, we introduce a non-interactive privacy mechanism creating private views $Z^{1},\ldots,Z^{n}$ of the original data $X^{1},\ldots,X^{n}$ that satisfy the local differential privacy constraint of level $\alpha$.
These privatized data will then be used to define a private selector whose risk will be studied in Section \ref{Subsection Upper Bound scenario 1}.
%This will enable us to obtain sufficient conditions so that almost full recovery is possible under local differential privacy constraints.

To obtain the privatized data, we first censor the unbounded random variables $X^{i}_j$, for $i=1,\ldots,n$, $j=1,\ldots,d$,  and then make use of an appropriately scaled version of the classical Laplace mechanism.
For all $i\in\llbr 1,n\rrbr$ and $j\in\llbr 1,d\rrbr$ define
\begin{equation}\label{Privacy mechanism}
Z^{i}_j=\sgn[X^{i}_j]+\frac{2 d}{\alpha}W^{i}_j,
\end{equation}
where $\sgn[x]=1$, for $x \geq 0$, and 0, for $x<0$, the $W^{i}_j$'s are i.i.d Laplace$(1)$ random variables,  and $W^{i}_j$ is independent from $X^{i}_j$.
%The censoring level $T$ needs to be properly chosen and will be specified later.

Note that the privacy mechanism defining $(Z^{i})_{i=1,\ldots,n}$ is non-interactive since $Z^{i}$ does only depend on $X^{i}$ and not on $Z^{k}$ for $k\neq i$.
This is also a coordinate local mechanism since $Z^{i}_j$ depends on $X^{i}_j$ but not on the $X^{i}_l$ for $l\neq j$.
The following Proposition shows that it satisfies the condition of $\alpha$-local differential privacy.

\begin{proposition}
For all $i\in\llbr 1,n\rrbr$ and $j\in \llbr 1,d\rrbr$, $Z^{i}_j$ is an $\alpha/d$-differentially private view of $X^{i}_j$.
Consequently, for all $i\in\llbr 1,n\rrbr$ $Z^{i}=(Z^{i}_j)_{j=1,\ldots,d}$ is an $\alpha$-differentially private view of $X^{i}$.
\end{proposition}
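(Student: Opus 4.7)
The plan is to verify the per-coordinate privacy first, and then upgrade to full vector privacy by exploiting the conditional independence of the coordinate-wise noises.

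First, I would compute the conditional density of $Z^{i}_j$ given $X^{i}_j=x$. Since $W^{i}_j$ is Laplace$(1)$, the rescaled noise $(2d/\alpha) W^{i}_j$ is Laplace with scale parameter $2d/\alpha$, so $Z^{i}_j \mid X^{i}_j=x$ has density
\begin{equation*}
q_{x}(z)=\frac{\alpha}{4d}\exp\!\left(-\frac{\alpha}{2d}\,\bigl|z-\sgn[x]\bigr|\right),\qquad z\in\mathbb{R}.
\end{equation*}
For any $x,x^{\prime}\in\mathbb{R}$ and any $z\in\mathbb{R}$, the reverse triangle inequality gives
\begin{equation*}
\frac{q_{x}(z)}{q_{x^{\prime}}(z)}=\exp\!\left(\frac{\alpha}{2d}\bigl(|z-\sgn[x^{\prime}]|-|z-\sgn[x]|\bigr)\right)\leq \exp\!\left(\frac{\alpha}{2d}\,\bigl|\sgn[x]-\sgn[x^{\prime}]\bigr|\right),
\end{equation*}
and since $\sgn$ takes values in $\{0,1\}$ the last quantity is at most $\exp(\alpha/d)$. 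Integrating this pointwise ratio bound over any measurable set $A\subseteq\mathbb{R}$ yields
\begin{equation*}
\sup_{A\in\mathcal{B}(\mathbb{R})}\frac{Q^{i}_{j}(A\mid X^{i}_j=x)}{Q^{i}_{j}(A\mid X^{i}_j=x^{\prime})}\leq \exp(\alpha/d),
\end{equation*}
i.e.\ each $Q^{i}_j$ is $\alpha/d$-locally differentially private.

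For the joint mechanism $Q^{i}$, the key observation is that conditional on $X^{i}=x=(x_1,\ldots,x_d)$ the coordinates $Z^{i}_1,\ldots,Z^{i}_d$ are independent, because the noises $W^{i}_1,\ldots,W^{i}_d$ are independent and each $Z^{i}_j$ depends only on $(X^{i}_j,W^{i}_j)$. Hence $Q^{i}(\,\cdot\mid X^{i}=x)$ admits the product density
\begin{equation*}
q_{x}^{\otimes}(z_1,\ldots,z_d)=\prod_{j=1}^{d} q_{x_j}(z_j).
\end{equation*}
Applying the one-dimensional bound coordinate by coordinate, for every $z\in\mathbb{R}^d$ and every $x,x^{\prime}\in\mathbb{R}^d$,
\begin{equation*}
\frac{q_{x}^{\otimes}(z)}{q_{x^{\prime}}^{\otimes}(z)}=\prod_{j=1}^{d}\frac{q_{x_j}(z_j)}{q_{x^{\prime}_j}(z_j)}\leq \prod_{j=1}^{d}\exp(\alpha/d)=\exp(\alpha).
\end{equation*}
Integrating this pointwise inequality over any measurable $A\subseteq\mathbb{R}^d$ gives the desired bound $Q^{i}(A\mid X^{i}=x)/Q^{i}(A\mid X^{i}=x^{\prime})\leq \exp(\alpha)$, establishing that $Z^{i}$ is an $\alpha$-differentially private view of $X^{i}$.

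There is no real obstacle here — the proof is essentially a textbook Laplace-mechanism computation, and the only point worth being careful about is that the "composition" step should be done via the product density of the conditional law (made possible by the independence of the Laplace noises across coordinates) rather than invoking a generic sequential composition theorem, which would also work but is heavier machinery than is needed.
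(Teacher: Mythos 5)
Your proof is correct and follows essentially the same route as the paper's: compute the Laplace conditional density, bound the pointwise ratio via the (reverse) triangle inequality, and then pass to the vector case by factorizing the conditional law as a product over coordinates. The only cosmetic difference is that you note $\sgn$ takes values in $\{0,1\}$ (so the single-coordinate bound is actually the slightly tighter $\exp(\alpha/(2d))$), whereas the paper uses the looser bound $|\sgn[x']-\sgn[x]|\leq 2$; both yield $\exp(\alpha/d)$, so this is immaterial.
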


\begin{proof}
Set $r=2 d/\alpha$.
By definition of the privacy mechanism \eqref{Privacy mechanism}, the conditional density of $Z^{i}_j$ given $X^{i}_j=x$ can be written as
$$
q^{Z^{i}_j\mid X^{i}_j=x}(z)= \frac{1}{2r}\exp\left(-\frac{\vert z-\sgn[x]\vert}{r} \right).
$$
Thus, by the reverse and the ordinary triangle inequality it holds for all $i\in \llbr 1,n \rrbr$, $j\in\llbr 1,d\rrbr$ and all $x,x',z\in\Rb$,
\begin{align*}
\frac{q^{Z^{i}_j\mid X^{i}_j=x}(z)}{q^{Z^{i}_j\mid X^{i}_j=x'}(z)}&=\exp\left(\frac{\vert z-\sgn[x']\vert}{r} - \frac{\vert z- \sgn[x]\vert}{r} \right)\\
&\leq \exp\left(\frac{\vert \sgn[x']- \sgn[x] \vert}{r}  \right)\\
&\leq \exp\left(\frac{2}{r}  \right)
\leq \exp\left(\frac{\alpha}{d}  \right).
\end{align*}
This proves that $Z^{i}_j$ is an $\alpha/d$-differentially private view of $X^{i}_j$.
Let us check that $Z^{i}$ is an $\alpha$-differentially private view of $X^{i}$. Denote by $q^{Z^{i}\mid X^{i}=x}$ the conditional density of $Z^{i}$ given $X^{i}=x$ and note that for all $x,x',z\in\Rb^d$ it holds
$$
\frac{q^{Z^{i}\mid X^{i}=x}(z)}{q^{Z^{i}\mid X^{i}=x'}(z)}=\prod_{j=1}^d\frac{q^{Z^{i}_j\mid X^{i}_j=x_j}(z_j)}{q^{Z^{i}_j\mid X^{i}_j=x_j'}(z_j)}\leq e^\alpha,
$$
using the independence of the coordinates $X_1^i,...,X_d^i$ and the conditional independence of $Z_1^i,...,Z_d^i$ given $X^i$.
\end{proof}

\subsection{Upper bounds}\label{Subsection Upper Bound scenario 1}

Using these privatized data, we define two selectors that will provide upper bounds on the minimax risk \eqref{MinimaxRisk scenario 1}.
For the class $\Theta_d^+(s,a)$, we will use the selector $\hat{\eta}^+$ with the components
\begin{equation}\label{Selecteur1}
\hat{\eta}^+_j=I\left(\frac{1}{n} \sum_{i=1}^n Z^{i}_j\geq \tau\right), \quad j=1,\ldots,d,
\end{equation}
where the threshold $\tau$ has to be properly chosen, later on.
For the class $\Theta_d(s,a)$, we will use the selector $\hat{\eta}$ with the components
\begin{equation}\label{Selecteur2}
\hat{\eta}_j=I\left(\left \vert\frac{1}{n} \sum_{i=1}^n Z^{i}_j \right\vert\geq \tau\right), \quad j=1,\ldots,d,
\end{equation}
where $\tau$ to be defined later on.
Note that $\hat{\eta}^+$ and $\hat{\eta}$ are separable selectors since $\hat{\eta}^+_j$ and $\hat{\eta}_j$ depend only on $(Z^{i}_j)_{i=1,\ldots,n}$ and not on the $Z^{i}_k$ for $k\neq j$.
We now study the performances of these selectors. Recall that $\Phi$ is c.d.f. of the normal distribution.

%We begin with the case were $a>b\sigma$ for some $b>0$.

\begin{proposition}\label{Prop Upper Bound on the risk for large a Scenario 1}
Assume that $a \geq 2 \sigma$. Set $C_1:= 2\Phi( 2 \sqrt{c} )-1>0$.
If $\tau$ is chosen such that
$$  C_1-\tau > 0, \, \tau\alpha/(8d)\leq 1 \text{ and } \alpha(C_1-\tau)/(8d)\leq1,
$$
then it holds for all $\theta\in\Theta_d^+(s,a)$,
\begin{multline}\label{Eq Upper bound large a scenario 1 theta+}
\Eb\left[\frac{1}{s}\vert\hat{\eta}^+-\eta\vert \right] 
\leq \frac{d-\vert S\vert}{s}\left[\exp\left(-\frac{n\tau^2}{2^3 }\right)
+\exp\left(-\frac{\tau^2n\alpha^2}{2^7 d^2}  \right)\right]\\
 +\frac{\vert S\vert}{s}\left[\exp\left(-\frac{n(C_1-\tau)^2}{2^3 }\right)
 +\exp\left(-\frac{(C_1-\tau)^2n\alpha^2}{2^7 d^2}  \right) \right],
\end{multline}
and for all $\theta\in\Theta_d(s,a)$ it holds
\begin{multline}\label{Eq Upper bound large a scenario 1 theta}
\Eb\left[\frac{1}{s}\vert\hat{\eta}-\eta\vert \right]\leq 2\frac{d-\vert S\vert}{s}\left[\exp\left(-\frac{n\tau^2}{2^3 }\right)
+\exp\left(-\frac{\tau^2n\alpha^2}{2^7 d^2}  \right)\right]\\
 + 2\frac{\vert S\vert}{s}\left[\exp\left(-\frac{n(C_1-\tau)^2}{2^3 }\right) +\exp\left(-\frac{(C_1-\tau)^2n\alpha^2}{2^7 d^2}  \right) \right],
\end{multline}
where $S$ denotes the support of $\theta$.
\end{proposition}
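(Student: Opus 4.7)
Since $\hat\eta^+$ is separable, the expected Hamming loss decomposes coordinatewise as
\begin{equation*}
\Eb\left[\frac{1}{s}|\hat\eta^+-\eta|\right] = \frac{1}{s}\sum_{j\notin S}\Pb(\bar Z_j \geq \tau) + \frac{1}{s}\sum_{j\in S}\Pb(\bar Z_j < \tau),
\end{equation*}
where $\bar Z_j = \bar S_j + (2d/\alpha)\bar W_j$, with $\bar S_j = n^{-1}\sum_{i=1}^n \sgn[X^i_j]$ and $\bar W_j = n^{-1}\sum_{i=1}^n W^i_j$. The task reduces to bounding these two one-dimensional tail probabilities uniformly in $j$.

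The central quantity is $\mu_j := \Eb[\sgn[X^i_j]] = 2\Pb(\xi^i_j \geq -\theta_j/\sigma)-1$. By symmetry of $\xi^i_j$, $\mu_j = 0$ whenever $\theta_j = 0$, and for $\theta_j \geq a \geq 2\sigma$ the same symmetry gives $\mu_j \geq 2\Pb(\xi^i_j\geq -2)-1 = \Pb(|\xi^i_j|\leq 2)$. The main step is then to show $\Pb(|\xi^i_j|\leq 2) \geq C_1 = 2\Phi(2\sqrt c)-1$, which I would deduce from Caffarelli's contraction theorem: a symmetric $c$-strongly log-concave measure on $\Rb$ is the pushforward of $\mathcal{N}(0,1/c)$ by an odd $1$-Lipschitz map $T$, whence $|T(z)|\leq |z|$ and $\Pb(|\xi^i_j|\leq 2)\geq \Pb(|\mathcal{N}(0,1/c)|\leq 2)=C_1$. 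This is the one place where strong log-concavity enters substantively, and it is what I anticipate as the main obstacle.

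Given $\mu_j \in \{0\}\cup[C_1,+\infty)$, I split each tail via the triangle inequality: for $j\notin S$, $\{\bar Z_j\geq \tau\}\subseteq \{\bar S_j \geq \tau/2\}\cup\{\bar W_j\geq \alpha\tau/(4d)\}$, while for $j\in S$, $\{\bar Z_j<\tau\}\subseteq \{\mu_j-\bar S_j \geq (C_1-\tau)/2\}\cup\{-\bar W_j \geq \alpha(C_1-\tau)/(4d)\}$, the relevant gap here being $C_1-\tau>0$. Hoeffding's inequality applied to the bounded variables $\sgn[X^i_j]\in\{-1,1\}$ yields the terms $\exp(-n\tau^2/8)$ and $\exp(-n(C_1-\tau)^2/8)$. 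For the Laplace part, the MGF $\Eb[e^{\lambda W^i_j}]=(1-\lambda^2)^{-1}$ gives $\log \Eb[e^{\lambda W^i_j}]\leq 2\lambda^2$ for $|\lambda|\leq 1/\sqrt{2}$, whence Cram\'er--Chernoff produces $\Pb(|\bar W_j|\geq t)\leq 2\exp(-nt^2/8)$ in the quadratic regime $t\leq 2\sqrt{2}$. This regime is precisely what the hypotheses $\tau\alpha/(8d)\leq 1$ and $\alpha(C_1-\tau)/(8d)\leq 1$ enforce once we substitute $t=\alpha\tau/(4d)$ and $t=\alpha(C_1-\tau)/(4d)$, producing the terms $\exp(-\tau^2 n\alpha^2/(128d^2))$ and $\exp(-(C_1-\tau)^2 n\alpha^2/(128d^2))$. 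Summing over $j\notin S$ and $j\in S$ yields \eqref{Eq Upper bound large a scenario 1 theta+}.

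For the estimator $\hat\eta$ on $\Theta_d(s,a)$, the only modification is that $\{|\bar Z_j|\geq \tau\}$ (for $j\notin S$) and $\{|\bar Z_j|<\tau\}$ (for $j\in S$) are each bounded by the union of two one-sided events, $\{\bar Z_j \geq \tau\}\cup\{\bar Z_j\leq -\tau\}$ and $\{\bar Z_j<\tau\}\cup\{\bar Z_j>-\tau\}$ respectively; the negative-signal case $\theta_j\leq -a$, now possible under $\Theta_d(s,a)$, is treated symmetrically using $\mu_j\leq -C_1$. These union bounds account for the extra factor of $2$ in \eqref{Eq Upper bound large a scenario 1 theta}.
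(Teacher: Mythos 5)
Your proof is correct and follows essentially the same route as the paper: a coordinatewise decomposition, a triangle-inequality split separating the sign statistic from the Laplace noise, Hoeffding for the bounded signs, and Caffarelli's contraction theorem to get $\Eb[\sgn[a+\sigma\xi_1]]\geq 2\Phi(2\sqrt c)-1=C_1$ once $a\geq 2\sigma$. The only technical variation is in the Laplace tail, where you use the Cram\'er--Chernoff bound $\log\Eb[e^{\lambda W}]\leq 2\lambda^2$ for $|\lambda|\leq 1/\sqrt2$ while the paper invokes Bernstein's inequality with $v=2n$, $u=1$; both yield the identical quadratic tail $\exp(-nt^2/8)$ in the regime enforced by the hypotheses on $\tau$. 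A minor point on the two-sided case: for $j\in S$ your argument selects one one-sided event according to $\sgn(\theta_j)$ and thus gives no factor of $2$ on the $\lvert S\rvert/s$ term (actually a slightly tighter bound than stated), whereas the paper obtains the factor $2$ there by keeping the absolute value and applying two-sided concentration; either way the claimed inequality follows.
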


The proof of Proposition \ref{Prop Upper Bound on the risk for large a Scenario 1} is given in Section \ref{App proof Upper Bound Large a scenario 1} in the Appendix.
Some auxiliary results used in the proof of Proposition \ref{Prop Upper Bound on the risk for large a Scenario 1} can be found in Appendix \ref{App Auxiliary results upper bounds scenario 1}.
The following Corollary gives sufficient conditions so that almost full recovery and exact recovery are possible under local differential privacy in the Coordinate Local case when $a\geq 2\sigma$.

\begin{corollary}\label{Corollary Sufficient Conditions for AFR large a scenario 1}
Set $C_1=2\Phi(2 \sqrt{c})-1>0$. Assume  that
$$
\alpha/d\rightarrow 0, n\alpha^2/d^2\rightarrow +\infty \text{ and } \limsup \frac{\log(d/s)}{n\alpha^2/d^2}<\frac{C_1^2}{2^9}.
$$
Then the selector $\hat{\eta}^+$ defined by \eqref{Selecteur1} with $\tau=C_1/2$ satisfies
\begin{equation}\label{Eq AFR large a scenario 1}
  \sup_{\theta\in\Theta}\frac{1}{s}\Eb_{Q(P_\theta^{\otimes n})}\vert\hat{\eta}^+(Z^{1},\ldots,Z^{d})-\eta\vert\rightarrow 0,
\end{equation}
for all $a\geq 2\sigma$, where $\Theta=\Theta_d^+(s,a)$ or $\Theta=\Theta_d(s,a)$.
If, in addition, $ \limsup \frac{\log(d)}{n\alpha^2/d^2}<\frac{C_1^2}{2^9}$, then
\begin{equation}\label{Eq ER large a scenario 1}
    \sup_{\theta\in\Theta}\Eb_{Q(P_\theta^{\otimes n})}\vert\hat{\eta}^+(Z^{1},\ldots,Z^{d})-\eta\vert\rightarrow 0,
\end{equation}
for all $a\geq 2\sigma$.
\end{corollary}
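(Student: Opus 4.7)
The plan is to specialise Proposition \ref{Prop Upper Bound on the risk for large a Scenario 1} to the choice $\tau = C_1/2$ and show that the resulting bound tends to zero under the stated assumptions. First I would check that this $\tau$ is admissible in Proposition \ref{Prop Upper Bound on the risk for large a Scenario 1}: clearly $C_1 - \tau = C_1/2 > 0$, and both requirements $\tau\alpha/(8d)\le 1$ and $\alpha(C_1-\tau)/(8d)\le 1$ reduce to $C_1\alpha/(16d)\le 1$, which holds for all $d$ large enough since $\alpha/d \to 0$ by hypothesis.

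With $\tau = C_1/2$ one has $\tau^2 = (C_1-\tau)^2 = C_1^2/4$, so the four exponential terms in \eqref{Eq Upper bound large a scenario 1 theta+} collapse pairwise and
$$
\Eb\!\left[\frac{1}{s}|\hat{\eta}^+-\eta|\right] \;\leq\; \frac{d}{s}\left[\exp\!\left(-\frac{nC_1^2}{2^5}\right) + \exp\!\left(-\frac{C_1^2\, n\alpha^2}{2^9 d^2}\right)\right]
$$
for every $\theta\in\Theta_d^+(s,a)$ with $a\geq 2\sigma$. Since $\alpha/d\to 0$, we have $\alpha^2/d^2 \leq 1$ eventually, so $nC_1^2/2^5 \geq C_1^2 n\alpha^2/(2^9 d^2)$ and the second exponential dominates. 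Taking logarithms, the bound behaves like $\log(d/s) - C_1^2 n\alpha^2/(2^9 d^2) + O(1)$. By the hypothesis $\limsup \log(d/s)/(n\alpha^2/d^2) < C_1^2/2^9$ together with $n\alpha^2/d^2 \to \infty$, this quantity diverges to $-\infty$, yielding \eqref{Eq AFR large a scenario 1}. For $\Theta_d(s,a)$ the exact same reasoning applies to \eqref{Eq Upper bound large a scenario 1 theta}, the only change being a harmless extra factor $2$.

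For exact recovery, I would multiply the Hamming loss bound by $s$: the risk bound becomes
$$
\sup_{\theta\in\Theta}\Eb\,|\hat{\eta}^+-\eta| \;\leq\; d\left[\exp\!\left(-\frac{nC_1^2}{2^5}\right) + \exp\!\left(-\frac{C_1^2\, n\alpha^2}{2^9 d^2}\right)\right],
$$
whose logarithm is $\log(d) - C_1^2 n\alpha^2/(2^9 d^2) + O(1)$; under the strengthened hypothesis $\limsup \log(d)/(n\alpha^2/d^2) < C_1^2/2^9$ this tends to $-\infty$, proving \eqref{Eq ER large a scenario 1}. Since the two exponents in Proposition \ref{Prop Upper Bound on the risk for large a Scenario 1} already encode the relevant concentration, no genuine difficulty arises; the only care needed is the bookkeeping that identifies the coordinate-local privacy term $\exp(-C_1^2 n\alpha^2/(2^9 d^2))$ as the dominating one, which is where the critical constant $2^9$ in the $\limsup$ condition comes from.
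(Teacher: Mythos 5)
Your proof is correct and follows essentially the same route as the paper's: specialise Proposition \ref{Prop Upper Bound on the risk for large a Scenario 1} to $\tau=C_1/2$, verify the admissibility conditions using $\alpha/d\to 0$, and check that the resulting bound $\tfrac{d}{s}\bigl[\exp(-nC_1^2/2^5)+\exp(-C_1^2 n\alpha^2/(2^9 d^2))\bigr]$ tends to zero by writing it in the form $\exp\bigl(\tfrac{n\alpha^2}{d^2}[\tfrac{\log(d/s)}{n\alpha^2/d^2}-\tfrac{C_1^2}{2^9}]\bigr)$ and invoking the $\limsup$ hypothesis. The only cosmetic difference is that you first note the privacy exponential dominates the sub-Gaussian one (since $\alpha/d\le 1$ eventually) before taking logarithms, whereas the paper handles the two exponential terms symmetrically; both handle the extra factor $2$ for $\Theta_d(s,a)$ via \eqref{Eq Upper bound large a scenario 1 theta} and the exact-recovery case by dropping the $1/s$, exactly as you do.
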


The proof of Corollary \ref{Corollary Sufficient Conditions for AFR large a scenario 1} is given in Section \ref{App Corollary Upper Bound Large a scenario 1} in the Appendix.
Since we have seen that almost full recovery is impossible when $n\alpha^2/d^2$ is bounded from above or when $n\alpha^2/d^2 \rightarrow +\infty$ and $a\lesssim (\sigma d)/(\sqrt{n}\alpha)$, it remains to study the case where $n\alpha^2/d^2 \rightarrow +\infty$ and $\sigma d/(\sqrt{n}\alpha)\ll a\leq 2 \sigma$.
This is done below.

\begin{proposition}\label{Prop Upper Bound for small a scenario 1}
 Let $a>0$. If $\tau$ is chosen such that
 %$$ T\geq a+\sigma\sqrt{\frac 2c \log\left( \max\left\{ e, \frac{4\sigma}{ca\sqrt{2}} \right\} \right)}, $$
 $\tau < 2 a/\sigma p(2)$, $\tau\alpha/(8d)<1$ and $\alpha(a/\sigma p(2)-\tau/2)/(4d)\leq 1$
 then it holds for all $\theta\in\Theta_d^+(s,a)$,
\begin{align*}
\Eb\left[\frac{1}{s}\vert\hat{\eta}^+-\eta\vert \right]&\leq \frac{d-\vert S\vert}{s}\left[\exp\left(-\frac{n\tau^2}{2^3}\right)+\exp\left(-\frac{\tau^2 n\alpha^2}{2^7 d^2}  \right)\right]\\
& +\frac{\vert S\vert}{s}\left[\exp\left(-\frac{n(a/\sigma p(2)-\tau/2)^2}{2^3 }\right) +\exp\left(-\frac{(a/\sigma p(2)-\tau/2)^2 n\alpha^2}{2^5 d^2}  \right) \right],
\end{align*}
where $S$ denotes the support of $\theta$.
\end{proposition}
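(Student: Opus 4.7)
The strategy is to mirror the proof of Proposition~\ref{Prop Upper Bound on the risk for large a Scenario 1}, with the sole modification being the lower bound on $\Eb[\sgn[X^i_j]]$ for $j\in S$, which must now accommodate small values of $a$. I would begin by writing
\[
\Eb\bigl[\tfrac{1}{s}\vert\hat{\eta}^+-\eta\vert\bigr] = \tfrac{1}{s}\sum_{j\notin S}\Pb(\bar Z_j\geq\tau)+\tfrac{1}{s}\sum_{j\in S}\Pb(\bar Z_j<\tau),
\]
where $\bar Z_j=\bar S_j+\bar W_j$ with $\bar S_j=\tfrac{1}{n}\sum_{i=1}^n \sgn[X^i_j]$ and $\bar W_j=\tfrac{2d}{\alpha n}\sum_{i=1}^n W^i_j$.

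For $j\notin S$, the analysis is identical to that of Proposition~\ref{Prop Upper Bound on the risk for large a Scenario 1}: by the symmetry of $P^{\xi_1}$, $\Eb[\bar S_j]=0$, and the union bound applied at the split $\tau=\tau/2+\tau/2$ yields $\Pb(\bar Z_j\geq\tau) \leq \Pb(\bar S_j\geq\tau/2)+\Pb(\bar W_j\geq\tau/2)$. Hoeffding's inequality applied to the bounded $\bar S_j$ yields the exponent $n\tau^2/2^3$, while a Bernstein inequality for the sub-exponential sum $\bar W_j$ (scaled Laplaces have MGF $(1-\lambda^2)^{-1}$ for $\vert\lambda\vert<1$) yields $n\tau^2\alpha^2/(2^7 d^2)$, the latter valid under $\tau\alpha/(8d)\leq 1$. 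This reproduces the ``null'' terms in the stated bound.

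For $j\in S$ the argument diverges from Proposition~\ref{Prop Upper Bound on the risk for large a Scenario 1}. Since $p=\exp(-\phi)$ is log-concave with even potential $\phi$, the density $p$ is non-increasing on $[0,+\infty)$, so for $a/\sigma\leq 2$,
\[
F(a/\sigma)-\tfrac{1}{2} = \int_0^{a/\sigma}p(t)\,dt \geq \tfrac{a}{\sigma}\,p(a/\sigma) \geq \tfrac{a}{\sigma}\,p(2),
\]
with $F$ the CDF of $\xi^i_j$. Setting $\mu := \tfrac{a}{\sigma}p(2)$, since $\theta_j\geq a$ and $F$ is non-decreasing we obtain $\Eb[\bar S_j]=2F(\theta_j/\sigma)-1\geq 2\mu$. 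The assumption $\tau<2\mu$ validates the split $\tau=(\tau/2+\mu)+(\tau/2-\mu)$, whose second summand is negative, yielding
\[
\Pb(\bar Z_j<\tau) \leq \Pb(\bar S_j<\tau/2+\mu)+\Pb(\bar W_j<\tau/2-\mu),
\]
where both events correspond to deviations of size $\mu-\tau/2>0$ below the respective means. Hoeffding applied to $\bar S_j$ gives the exponent $n(\mu-\tau/2)^2/2^3$, and Bernstein applied to $-\bar W_j$ gives $n(\mu-\tau/2)^2\alpha^2/(2^5 d^2)$ under $\alpha(\mu-\tau/2)/(4d)\leq 1$. Summing the contributions over $j\notin S$ and $j\in S$ then produces the claimed bound.

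The main technical point is the linear-in-$a$ lower bound on $F(a/\sigma)-1/2$ obtained from the unimodality of $p$; the large-$a$ analysis of Proposition~\ref{Prop Upper Bound on the risk for large a Scenario 1} instead leverages a comparison of $F$ with the Gaussian CDF $\Phi$ (via strong log-concavity), yielding a constant lower bound $C_1/2$ that is too coarse when $a/\sigma$ is small. Once this linear bound is in hand, the concentration step (Hoeffding for the bounded signs, Bernstein for the Laplace noise) is mechanical and identical to the large-$a$ case.
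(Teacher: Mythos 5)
Your proof is correct and follows the paper's argument essentially verbatim: the same decomposition over $j\in S$ and $j\notin S$, the same two-term union bound with Hoeffding/Lemma \ref{Lemme Troncation Sous Gaussienne} for the sign averages and Bernstein for the Laplace noise, and crucially the same improved lower bound $\Eb[\sgn[a+\sigma\xi_1]]\geq 2(a/\sigma)p(2)$ obtained from the monotonicity of the symmetric unimodal density $p$ on $[0,\infty)$. The only (inherited) cosmetic point is that Hoeffding at deviation $\mu-\tau/2$ actually gives exponent $n(\mu-\tau/2)^2/2$, which the paper generously weakens to $2^3$; your write-up reproduces the paper's stated constant without comment, which is harmless.
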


The proof of Proposition \ref{Prop Upper Bound for small a scenario 1} can be found in Section \ref{App Proof Upper Bound Small a scenario 1} in the Appendix.
Note that as for the case $a\geq 2\sigma$, if $\theta\in\Theta_d(s,a)$ we use $\hat{\eta}$ instead of $\hat{\eta}^+$ and we can prove the same result with  an  extra multiplicative factor 2.
The next corollary gives new sufficient conditions so that almost full recovery and exact recovery are possible.

\begin{corollary}\label{Corollary Sufficient Conditions for AFR small a}
Assume that $\alpha/d\rightarrow 0$, $n\alpha^2/d^2\rightarrow +\infty$ and $\sigma d/(\sqrt{n}\alpha)\ll a\leq 2 \sigma$.
The selector $\hat{\eta}^+$ defined by \eqref{Selecteur1} with  $\tau= p(2)a /\sigma $ satisfies for $d$ large enough
$$
\sup_{\theta\in\Theta_d^+(s,a)}\frac{1}{s}\Eb_{Q(P_\theta^{\otimes n})}\vert\hat{\eta}^+(Z^{1},\ldots,Z^{d})-\eta\vert\leq 2\exp\left(\log\left(\frac{d}{s} \right)-\frac{p^2(2) a^2n\alpha^2}{2^{9}\sigma^2 d^2} \right).
$$
In particular, if $a \gg \frac{\sigma d}{ \alpha \sqrt{n}}  \log^{1/2}\left(\frac{d}{s} \right)$ it holds
\begin{equation}\label{Eq AFR small a scenario 1}
  \sup_{\theta\in\Theta_d^+(s,a)}\frac{1}{s}\Eb_{Q(P_\theta^{\otimes n})}\vert\hat{\eta}^+(Z^{1},\ldots,Z^{d})-\eta\vert\rightarrow 0.
\end{equation}
Moreover, if $a \gg \frac{\sigma d}{ \alpha \sqrt{n}} \log^{1/2}\left(d \right)$ then
\begin{equation}\label{Eq ER small a scenario 1}
  \sup_{\theta\in\Theta_d^+(s,a)}\Eb_{Q(P_\theta^{\otimes n})}\vert\hat{\eta}^+(Z^{1},\ldots,Z^{d}))-\eta\vert\rightarrow 0 .
\end{equation}
\end{corollary}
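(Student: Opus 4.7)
The plan is to derive the two asymptotic conclusions as direct consequences of Proposition \ref{Prop Upper Bound for small a scenario 1} applied with the choice $\tau = p(2) a/\sigma$. I would proceed in three stages: admissibility of $\tau$, identification of the dominant exponential term, and translation of the resulting bound into the almost full / exact recovery statements.

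First I would check that $\tau = p(2) a / \sigma$ satisfies the three admissibility constraints stated in Proposition \ref{Prop Upper Bound for small a scenario 1}. The condition $\tau < 2a/(\sigma p(2))$ reduces to $p(2)^2 < 2$, which is automatic since $p$ is a probability density (even bounded by $1/\sqrt{2\pi c}$ under the strong log-concavity assumption, by comparison with a Gaussian). The two remaining conditions $\tau \alpha / (8d) < 1$ and $\alpha \bigl( a/(\sigma p(2)) - \tau/2 \bigr)/(4d) \le 1$ both hold for $d$ large enough because $a \le 2\sigma$ makes $\tau$ and $a/(\sigma p(2)) - \tau/2$ bounded constants, while $\alpha/d \to 0$ by assumption.

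Next I would substitute $\tau = p(2) a/\sigma$ into the four exponentials produced by Proposition \ref{Prop Upper Bound for small a scenario 1} and identify the slowest-decaying one. Since $\alpha/d \to 0$, eventually $\alpha^2/d^2 \le 1$, so the term $\exp\bigl(-\tau^2 n \alpha^2/(2^7 d^2)\bigr)$ dominates $\exp(-n\tau^2/2^3)$ in the first bracket. For the second bracket, observe that $a/(\sigma p(2)) - \tau/2 = (a/\sigma)\bigl(1/p(2) - p(2)/2\bigr) > \tau/2$ because $p(2) < 1$, which gives $(a/(\sigma p(2)) - \tau/2)^2 / 2^5 > \tau^2/2^7$ after squaring and dividing; hence both exponentials in the second bracket are smaller than the dominant one in the first. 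Bounding each pair by twice its dominant term and using $(d-|S|)/s + |S|/s \le d/s$ yields
\[
\mathbb{E}\Bigl[\tfrac{1}{s}|\hat{\eta}^+ - \eta|\Bigr] \;\le\; \frac{2d}{s}\,\exp\!\Bigl(-\tfrac{p(2)^2 a^2 n \alpha^2}{2^7 \sigma^2 d^2}\Bigr),
\]
which (possibly after rewriting $\tau = p(2)a/\sigma$ via $\tau/2$ to absorb a benign factor, giving $2^9$ in the denominator as stated) becomes the displayed inequality.

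Finally the two recovery statements are immediate from this exponential bound. Under the condition $a \gg \sigma d \log^{1/2}(d/s) /(\alpha \sqrt{n})$ the exponent $\log(d/s) - p(2)^2 a^2 n\alpha^2/(2^9 \sigma^2 d^2)$ tends to $-\infty$, giving \eqref{Eq AFR small a scenario 1}. For exact recovery, the unnormalised expectation is $s$ times the normalised one, so multiplying by $s$ replaces $\log(d/s)$ by $\log d$ in the exponent, and the stronger hypothesis $a \gg \sigma d \log^{1/2}(d)/(\alpha \sqrt{n})$ delivers \eqref{Eq ER small a scenario 1}. The only genuinely delicate step is the comparison between the four exponentials in Proposition \ref{Prop Upper Bound for small a scenario 1}; verifying that the term carrying the factor $\alpha^2/d^2$ in its numerator is the smallest in magnitude is what forces the rate to scale with $\sigma d/(\alpha\sqrt{n})$ rather than the classical $\sigma/\sqrt{n}$, and this is the essential mechanism behind the phase-transition level in the coordinate-local regime.
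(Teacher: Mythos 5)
Your proposal follows the same route as the paper: substitute $\tau = p(2)a/\sigma$ into Proposition \ref{Prop Upper Bound for small a scenario 1}, identify the slowest-decaying exponential as the term carrying $\alpha^2/d^2$, and read off the two recovery thresholds from the resulting bound of the form $(d/s)\exp(-c\,a^2n\alpha^2/(\sigma^2 d^2))$. One small caveat on a step in your identification of the dominant term: the paper's expression $a/\sigma\,p(2)-\tau/2$ means $(a/\sigma)p(2)-\tau/2$ (this is clear from the derivation $\Eb[\sgn(a+\sigma\xi_1)]\geq 2(a/\sigma)p(2)$ inside the proof of the proposition), not $a/(\sigma p(2))-\tau/2$ as you parsed it; with $\tau=p(2)a/\sigma$ this quantity equals $\tau/2$ exactly, so the second bracket contributes $\exp(-n\tau^2/2^5)+\exp(-\tau^2 n\alpha^2/(2^7d^2))$, and the dominance of the $\alpha^2/d^2$ term among all four exponentials still holds once $\alpha^2/d^2\leq 4$, yielding your bound $2(d/s)\exp(-p^2(2)a^2n\alpha^2/(2^7\sigma^2d^2))$, which is (slightly sharper than but) consistent with the $2^9$ stated in the corollary.
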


If $n\alpha^2/d^2\rightarrow \infty$ with $(n\alpha^2/d^2) \gg \log(d/s)$, then Corollary \ref{Corollary Sufficient Conditions for AFR small a} combined with Corollary \ref{Corollary Sufficient Conditions for AFR large a scenario 1} and with the lower bound \eqref{Eq Lower Bound Scenario 1} prove a phase transition result (up to $\log$ factors) at the value $a^*=a^*(n,\alpha,d,\sigma)=\sigma d/(\alpha \sqrt{n})$. Indeed, we get that almost full recovery is impossible in the Coordinate Local case for all $a\leq C a^*$ and is possible for all $a\gg a^* \log^{1/2}(d/s)$.

\section{Coordinate global non-interactive privacy mechanisms}\label{Section Scenario 2}

In this section, we study the minimax risk \eqref{MinimaxRisk scenario 2}.
We prove that in the Coordinate Global case, almost full recovery and exact recovery are possible under weaker assumptions than the one we obtained for the Coordinate Local case.

\subsection{Privacy mechanism}

We describe in this section the privacy mechanism we use to obtain private data that will be used to design a private selector and to obtain upper bounds on the minimax risk \eqref{MinimaxRisk scenario 2} in the Coordinate Global case.

For all $i\in\llbr 1,n\rrbr$, the private view $Z^{i}$ of $X^{i}$ is obtained using the following steps:
\begin{itemize}
\item Compute $f(X^{i})=(\sgn[X^{i}_j])_{j=1,\ldots,d}$. For short, let us denote $\tilde{X}^{i}=f(X^i)$.
%with  coordinates
%$$
%\tilde{X}^{i}_j\sim
%\left\lbrace \begin{array}{ll}
%1 &\text{ with probability } \frac{1}{2}+\frac{\sgn[X^{i}_j]}{2} \\
%-1 & \text{ with probability } \frac{1}{2}-\frac{\sgn[X^{i}_j]}{2}.
%\end{array}\right.
%$$
\item Sample $Y^{i}\sim \Bc(\pi_\alpha)$ where $\pi_\alpha=e^\alpha/(e^\alpha+1)$ and generate $\tilde{Z}^{i}$ uniformly distributed on the set
$$
\left\{ \tilde{z}\in\left\{-B,B\right\}^d \, \mid \, \langle \tilde{z}, \tilde{X}^{i} \rangle> 0 \text{ or } (\langle \tilde{z}, \tilde{X}^{i} \rangle=0 \text{ and }\tilde{z}_1=B\tilde{X}^{i}_1)  \right\} $$
 if  $Y^{i}=1$, respectively on the set
$$
\left\{ \tilde{z}\in\left\{-B,B\right\}^d \, \mid \, \langle \tilde{z}, \tilde{X}^{i} \rangle< 0 \text{ or } (\langle \tilde{z}, \tilde{X}^{i} \rangle=0 \text{ and }\tilde{z}_1=-B\tilde{X}^{i}_1)  \right\}
$$
if $Y^{i}=0$, with
\begin{equation}
    \label{Kd}
B=\frac{e^\alpha+1}{e^\alpha-1}K_d, \text{ where } \frac{1}{K_d}=
\begin{cases}
\frac{1}{2^{d-1}}\binom{d-1}{\frac{d-1}{2}} & \text{ if $d$ is odd}\\
\frac{(d-2)!(d-2)}{2^{d-1}(\frac{d}{2}-1)!\frac{d}{2}!} & \text{ if $d$ is even}.
\end{cases}
\end{equation}
\item Define the vector $Z^{i}$ by $Z^{i}=\tilde{Z}^{i}$ if $d$ is odd, and by its components
$$
Z^{i}_j=
\begin{cases}
\frac{d-2}{2(d-1)}\tilde{Z}^{i}_1 & \text{if } j=1\\
\tilde{Z}^{i}_j & \forall j\in\llbr 2,d \rrbr,
\end{cases}
$$
if $d$ is even.
\end{itemize}
This mechanism is strongly inspired by the one proposed by Duchi et al. \cite{DuchiJordanWainwright2018MinimaxOptimalProcedure} for mean estimation  on the set of distributions $P$ supported on $\Bb_\infty(r)\subset \Rb^d$ with $\Vert\Eb[X]\Vert_0\leq s$.
In particular, if $d$ is odd, the event $\{\langle \tilde{z}, \tilde{X}^{i} \rangle=0 \}$ has probability zero for all $\tilde{z}\in\{-B,B\}^d$ and our mechanism coincides in this case  with the one proposed by Duchi et al. \cite{DuchiJordanWainwright2018MinimaxOptimalProcedure} applied to $\sgn(X^i)$ instead of $X^i$. 
%Since  in our framework the $X^{i}$ are not compactly supported, this censoring step is needed so that the probabilities appearing in the definition of $\tilde{X^{i}}$ are non-negative.

%We note that the censoring step is analogous to the one in the Coordinate Local case, and it is a general procedure that can be used for distributions with unbounded support when tackling classification-type problems, as support recovery for us. 

\begin{proposition}\label{Prop alpha DP scenario 2}
For all $i\in\llbr 1,n\rrbr$, $Z^{i}$ is an $\alpha$-differentially private view of $X^{i}$.
\end{proposition}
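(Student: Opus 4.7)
\medskip

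\textbf{Proof plan.} Because the map $X^{i}\mapsto \tilde X^{i}=\mathrm{sgn}(X^{i})$ is deterministic, and the final rescaling step from $\tilde Z^{i}$ to $Z^{i}$ is also deterministic, the post-processing property of differential privacy implies that it suffices to prove the following reduced claim: the conditional distribution of $\tilde Z^{i}$ given $\tilde X^{i}$ satisfies $\alpha$-local differential privacy as a Markov kernel from $\{-1,1\}^{d}$ to $\{-B,B\}^{d}$. I would state this reduction up front and then forget about $X^{i}$ entirely.

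The second step is to write down the conditional PMF of $\tilde Z^{i}$ explicitly. For $\tilde x\in\{-1,1\}^{d}$ set
\[
H_{+}(\tilde x)=\bigl\{\tilde z\in\{-B,B\}^{d}:\langle \tilde z,\tilde x\rangle>0\text{ or }(\langle \tilde z,\tilde x\rangle=0,\ \tilde z_{1}=B\tilde x_{1})\bigr\},
\]
and let $H_{-}(\tilde x)$ be its complement in $\{-B,B\}^{d}$, as in the description of the mechanism. Conditioning on $Y^{i}$, the PMF is $\pi_{\alpha}/|H_{+}(\tilde x)|$ on $H_{+}(\tilde x)$ and $(1-\pi_{\alpha})/|H_{-}(\tilde x)|$ on $H_{-}(\tilde x)$.

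The key combinatorial step, which I anticipate to be the only delicate point, is to show that $|H_{+}(\tilde x)|=|H_{-}(\tilde x)|=2^{d-1}$ for every $\tilde x$. For $d$ odd this is immediate: $\langle \tilde z,\tilde x\rangle=B(2k-d)$ where $k$ counts sign agreements, so it never vanishes, and the involution $\tilde z\mapsto -\tilde z$ pairs $H_{+}(\tilde x)$ with $H_{-}(\tilde x)$. For $d$ even the ties $\langle \tilde z,\tilde x\rangle=0$ correspond to $k=d/2$, of which there are $\binom{d}{d/2}$; the tie-breaking via $\tilde z_{1}$ sends exactly $\binom{d-1}{d/2-1}=\tfrac{1}{2}\binom{d}{d/2}$ of these into $H_{+}(\tilde x)$, so that
\[
|H_{+}(\tilde x)|=\tfrac{1}{2}\bigl(2^{d}-\tbinom{d}{d/2}\bigr)+\tfrac{1}{2}\tbinom{d}{d/2}=2^{d-1}.
\]
(This is precisely why the tie-breaking rule is included in the definition: without it the two sets would have different sizes for $d$ even and the mechanism would leak more than $\alpha$.)

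With this in hand, the PMF simplifies to $\pi_{\alpha}/2^{d-1}$ on $H_{+}(\tilde x)$ and $(1-\pi_{\alpha})/2^{d-1}$ on $H_{-}(\tilde x)$ for every $\tilde x$. Hence for any two inputs $\tilde x,\tilde x'\in\{-1,1\}^{d}$ and any $\tilde z\in\{-B,B\}^{d}$, the likelihood ratio is either $1$, or $\pi_{\alpha}/(1-\pi_{\alpha})$, or its reciprocal; by the definition $\pi_{\alpha}=e^{\alpha}/(e^{\alpha}+1)$ this ratio is bounded by $e^{\alpha}$. Taking suprema over measurable sets closes the proof of the reduced claim, and invoking post-processing both for $\tilde X^{i}=\mathrm{sgn}(X^{i})$ and for the deterministic rescaling $\tilde Z^{i}\mapsto Z^{i}$ yields the conclusion.
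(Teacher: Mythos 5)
Your proof is correct and takes essentially the same approach as the paper: reduce, via deterministic pre- and post-processing, to bounding the kernel $\tilde{X}^i\mapsto\tilde{Z}^i$, compute its conditional PMF as $\pi_\alpha/2^{d-1}$ or $(1-\pi_\alpha)/2^{d-1}$ on the two halves of the hypercube, and observe that the likelihood ratio is at most $\pi_\alpha/(1-\pi_\alpha)=e^\alpha$. The only substantive addition is that you explicitly verify the cardinality identity $|H_+(\tilde{x})|=|H_-(\tilde{x})|=2^{d-1}$ (including the binomial computation justifying the tie-breaking rule when $d$ is even), which the paper states without proof; the paper instead keeps $X^i$ in the picture and averages over $\Pb(\tilde{X}=\tilde{x}\mid X=x)$, but since $\tilde{X}=\mathrm{sgn}(X)$ is deterministic this reduces to your simpler formulation.
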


The following proposition will be useful in the analysis of the selector proposed in Section \ref{Subsection Upper bound scenario 2}.

\begin{proposition}\label{Prop Expectation of the private data scenario 2}
For all $i\in\llbr 1,n\rrbr$, it holds
$$
\Eb[Z^{i}\mid X^{i}]=f(X^{i}).
$$
\end{proposition}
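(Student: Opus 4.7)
Work conditionally on $X^i$ and set $v := f(X^i) \in \{-1,+1\}^d$. Writing
$$
S_1(v) := \{\tilde z \in \{-B,B\}^d : \langle \tilde z, v\rangle > 0\} \cup \{\tilde z : \langle \tilde z, v\rangle = 0,\ \tilde z_1 = Bv_1\}
$$
and $S_0(v)$ for the analogue with reversed inequalities, the mechanism gives $\tilde Z^i \mid \{X^i,Y^i=1\} \sim \mathrm{Unif}(S_1(v))$ and $\tilde Z^i \mid \{X^i,Y^i=0\} \sim \mathrm{Unif}(S_0(v))$. The involution $\tilde z \mapsto -\tilde z$ is a bijection $S_1(v) \leftrightarrow S_0(v)$, so $\Eb[\tilde Z^i \mid X^i, Y^i=0]=-\Eb[\tilde Z^i \mid X^i, Y^i=1]$; averaging over $Y^i \sim \Bc(\pi_\alpha)$ and using $2\pi_\alpha-1=(e^\alpha-1)/(e^\alpha+1)$ reduces the problem to computing $\Eb[\tilde Z^i \mid X^i, Y^i=1]$. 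A second symmetry, the coordinate sign-flip $\tilde z\mapsto\tilde z':=(v_j\tilde z_j)_j$, is a bijection $S_1(v)\to S_1(\mathbf{1})$ (it preserves $\langle\cdot,v\rangle$ and turns the tie-breaker $\tilde z_1=Bv_1$ into $\tilde z'_1=B$), so $\Eb[\tilde Z^i_j \mid X^i, Y^i=1] = v_j\,T_j$ where $T_j:=\Eb_{U\sim\mathrm{Unif}(S_1(\mathbf 1))}[U_j]$ depends only on $j$ and $d$.

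The remainder is combinatorial. Pairing $\tilde z\leftrightarrow-\tilde z$ and splitting the tie set $\{\langle\tilde z,\mathbf 1\rangle=0\}$ evenly via the rule on $\tilde z_1$ gives $|S_1(\mathbf 1)|=2^{d-1}$ in both parities. For the signed cardinality $N_j^+-N_j^-:=\#\{\tilde z\in S_1(\mathbf 1):\tilde z_j=B\}-\#\{\tilde z\in S_1(\mathbf 1):\tilde z_j=-B\}$, I would enumerate over the number of $+B$ entries in the coordinates distinct from $\{1,j\}$ and compare the thresholds imposed by the strict-inequality and tie-breaking parts of $S_1(\mathbf 1)$. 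This yields $N_j^+-N_j^-=\binom{d-1}{(d-1)/2}$ for every $j$ when $d$ is odd; when $d$ is even, $N_j^+-N_j^-=2\binom{d-2}{d/2}$ for $j\neq 1$ while $N_1^+-N_1^-=2\binom{d-1}{d/2}=\tfrac{2(d-1)}{d-2}\cdot 2\binom{d-2}{d/2}$ by Pascal's rule, the discrepancy reflecting the privileged role of coordinate $1$ in the tie-breaker. Dividing by $|S_1(\mathbf 1)|=2^{d-1}$ and comparing with the explicit formula \eqref{Kd} (which identifies $\binom{d-1}{(d-1)/2}/2^{d-1}$ and $\binom{d-2}{d/2}/2^{d-2}$ with $1/K_d$ in the two parities) gives $T_j=B/K_d$ for $j\neq 1$ (and for every $j$ when $d$ is odd), and $T_1=\tfrac{2(d-1)}{d-2}\cdot B/K_d$ when $d$ is even.

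Substituting $B=\tfrac{e^\alpha+1}{e^\alpha-1}K_d$ and combining with the factor $\tfrac{e^\alpha-1}{e^\alpha+1}$ from the $Y^i$ averaging yields $\Eb[\tilde Z^i_j \mid X^i]=v_j$ for $j\neq 1$ (and for every $j$ when $d$ is odd); when $d$ is even, the extra factor $2(d-1)/(d-2)$ in $T_1$ is cancelled exactly by the rescaling $Z^i_1=\tfrac{d-2}{2(d-1)}\tilde Z^i_1$, so $\Eb[Z^i_1 \mid X^i]=v_1$ as well, giving $\Eb[Z^i \mid X^i]=v=f(X^i)$. The main obstacle is the bookkeeping around the tie-breaking rule in the even case: the constant $K_d$ and the correction factor $(d-2)/(2(d-1))$ on coordinate $1$ are calibrated precisely so that the relevant binomial identities match up across the two parities, and carefully verifying this cancellation is the only nontrivial step.
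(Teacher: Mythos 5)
Your plan is correct and follows essentially the same route as the paper: both first use the involution $\tilde z\mapsto-\tilde z$ (combined with averaging over $Y^i$) to reduce to $\mathrm{Unif}(S_1)$, both establish $|S_1|=2^{d-1}$, and both then compute signed binomial counts of $\{\tilde z_j=\pm B\}$ separately in the odd and even cases, with the even case requiring the tie-breaking bookkeeping and the cancellation of the factor $2(d-1)/(d-2)$ by the rescaling of $Z^i_1$. The one genuine (if small) streamlining in your version is the coordinate sign-flip bijection $\tilde z\mapsto(v_j\tilde z_j)_j$ reducing everything to $v=\mathbf 1$; the paper instead carries $\tilde x_j$ as an explicit factor through the binomial identity $\sum_{\langle\tilde z,\tilde x\rangle=kB}\tilde z_j=B\bigl[\binom{d-1}{(d+k)/2-1}-\binom{d-1}{(d+k)/2}\bigr]\tilde x_j$ and sums over the admissible values of $k$, which is the same combinatorics expressed without the normalization. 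All the binomial evaluations you state (including $\binom{d-2}{d/2-2}=\binom{d-2}{d/2}$, $\binom{d}{d/2}=2\binom{d-1}{d/2}$, and $\binom{d-1}{d/2}/\binom{d-2}{d/2}=2(d-1)/(d-2)$, and the identification of $1/K_d$ in each parity) check out against the paper's; the only thing still owed is actually carrying out the enumeration "over the number of $+B$ entries in coordinates distinct from $\{1,j\}$" that you defer, which is exactly the content of the paper's displayed identity.
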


The proofs of Proposition \ref{Prop alpha DP scenario 2} and Proposition \ref{Prop Expectation of the private data scenario 2} can be found respectively in Section \ref{App Proof alpha DP scenario 2} and \ref{App Proof expectation of the private data scenario 2} of the Appendix.
Note that it also holds $\Eb[Z^{i}\mid X^{i}]=f(X_i)$ when $Z_i$ is produced via the Laplace mechanism described in Subsection \ref{Subsection Privacy Mechanism scenario 1}.
However the variance $\Var(Z^{i}_j\mid X^{i})$ is slower by a multiplicative factor $d$ when $Z^{i}$ is produced with the Laplace mechanism than when it is obtained with the above coordinate global mechanism.
Indeed,  if $Z^{i}$ is produced with the above mechanism, then we have $\Var(Z^{i}_j\mid X^{i})\leq B^2$. Stirling's approximation yields $K_d^2\lesssim d$ for $d$ large enough, see Lemma \ref{Lem Asymptotic Property Kd} in Appendix \ref{App Proof Asymptotic Property Kd} for details. Thus, if $\alpha$ is bounded, we obtain $\Var(Z^{i}_j\mid X^{i})\leq d/\alpha^2$.
Now, if $Z^{i}$ is produced with the Laplace mechanism then it holds $\Var(Z^{i}_j\mid X^{i})=8d^2/\alpha^2$. This faster variance  explains that we will obtain better results when allowing for coordinate global mechanisms.

\subsection{Upper bounds}\label{Subsection Upper bound scenario 2}

Using the privatized data of the previous subsection, we define two selectors that will enable us to obtain upper bounds on the minimax risk \eqref{MinimaxRisk scenario 2}.
As in the Coordinate Local case, for the class $\Theta_d^+(s,a)$, we will use the selector $\hat{\eta}^+$ with the components
\begin{equation}\label{Selecteur3}
\hat{\eta}^+_j=I\left(\frac{1}{n} \sum_{i=1}^n Z^{i}_j\geq \tau\right), \quad j=1,\ldots,d,
\end{equation}
where the threshold $\tau$ has to be chosen.
For the class $\Theta_d(s,a)$, we will use the selector $\hat{\eta}$ with the components
\begin{equation}\label{Selecteur4}
\hat{\eta}_j=I\left(\left \vert\frac{1}{n} \sum_{i=1}^n Z^{i}_j \right\vert\geq \tau\right), \quad j=1,\ldots,d.
\end{equation}
We now study the performances of these selectors.

The following result gives an upper bound on the risk of  selector \eqref{Selecteur3}  when $a\geq C\sigma$ and will enable us to obtain sufficient conditions so that almost full recovery is possible when $a\geq 2\sigma$ in the Coordinate Global case.

\begin{proposition}\label{Prop Upper Bound Large a scenario 2}
Assume that $a> 2 \sigma$ and set $C_1:= 2\Phi(2\sqrt{c})-1>0$. % where $\Phi$ denotes the standard Gaussian cumulative distribution function.
If $\tau$ is chosen such that
$
 C_1-\tau> 0,
$
then it holds for all $\theta\in\Theta_d^+(s,a)$,
$$
\Eb\left[\frac{1}{s}\vert\hat{\eta}^+-\eta\vert \right]\leq \frac{d-\vert S\vert}{s}\exp\left(-\frac{n\tau^2}{2B^2 }\right)+\frac{\vert S\vert}{s}\exp\left(-\frac{n(C_1-\tau)^2}{2B^2}\right) ,
$$
where $S$ denotes the support of $\theta$.
In particular, choosing $\tau=C_1/2$ yields
$$
\sup_{\theta\in\Theta_d^+(s,a)} \Eb\left[\frac{1}{s}\vert\hat{\eta}^+ -\eta\vert \right]\leq \frac{d}{s}\exp\left( -\frac{C_1^2n(e^\alpha-1)^2}{8(e^\alpha+1)^2 K_d^2} \right)
$$
for all $a\geq 2\sigma$.
\end{proposition}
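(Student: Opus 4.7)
The plan is to decompose the expected Hamming loss coordinate by coordinate and apply a concentration argument to each term. Writing
$$
\Eb\!\left[\frac{1}{s}\vert\hat{\eta}^+-\eta\vert\right] = \frac{1}{s}\sum_{j\notin S} \mathbb{P}(\hat{\eta}^+_j=1) + \frac{1}{s}\sum_{j\in S}\mathbb{P}(\hat{\eta}^+_j=0),
$$
the problem reduces, for each coordinate $j$, to controlling the deviations of $\overline{Z}_j := \frac{1}{n}\sum_{i=1}^n Z^{i}_j$ from its mean. The two ingredients I would rely on are (i) the explicit conditional expectation $\Eb[Z^{i}\mid X^{i}]=f(X^{i})=(\sgn[X^{i}_j])_j$ furnished by Proposition \ref{Prop Expectation of the private data scenario 2}, and (ii) the uniform bound $\vert Z^{i}_j\vert \leq B$, which follows immediately from the construction of $Z^{i}$ (for odd $d$ one has $Z^{i}_j\in\{-B,B\}$; for even $d$ the first coordinate is merely shrunk by a factor in $(0,1)$).

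For the mean, the tower property gives $\Eb[Z^{i}_j]=\mathbb{P}(X^{i}_j\geq 0)-\mathbb{P}(X^{i}_j<0)$. If $j\notin S$ then $X^{i}_j=\sigma\xi^{i}_j$ is symmetric around zero, so $\Eb[Z^{i}_j]=0$. If $j\in S$, then $\theta_j\geq a\geq 2\sigma$ and
$$
\Eb[Z^{i}_j]=1-2\mathbb{P}(\sigma\xi^{i}_j<-\theta_j) \geq 1-2\mathbb{P}(\xi^{i}_j>2).
$$
The Gaussian-type tail comparison available for a symmetric, unit-variance, $c$-strongly log-concave measure yields $\mathbb{P}(\xi^{i}_j>2)\leq 1-\Phi(2\sqrt{c})$, whence $\Eb[Z^{i}_j]\geq 2\Phi(2\sqrt{c})-1=C_1$.

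With the means in hand I apply Hoeffding's inequality to the i.i.d.\ sequence $(Z^{i}_j)_{i=1,\ldots,n}$, which is bounded in $[-B,B]$. For $j\notin S$ this gives $\mathbb{P}(\overline{Z}_j\geq \tau)\leq \exp(-n\tau^2/(2B^2))$, and for $j\in S$, using $C_1-\tau>0$,
$$
\mathbb{P}(\overline{Z}_j<\tau) \leq \mathbb{P}\!\left(\overline{Z}_j-\Eb[Z^{i}_j]\leq -(C_1-\tau)\right) \leq \exp\!\left(-\frac{n(C_1-\tau)^2}{2B^2}\right).
$$
Summing over $j$ and dividing by $s$ yields the displayed risk bound. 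Specializing to $\tau=C_1/2$ and substituting $B=(e^\alpha+1)K_d/(e^\alpha-1)$ produces the final uniform bound.

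The main technical subtlety is the strongly log-concave tail estimate $\mathbb{P}(\xi^{i}_j>2)\leq 1-\Phi(2\sqrt{c})$; the rest is a clean Hoeffding plus decomposition argument. The improvement over the Coordinate Local case is apparent at the concentration step: here the boundedness scale is $B\asymp K_d/\alpha \asymp \sqrt{d}/\alpha$, a factor $\sqrt{d}$ smaller than the scale $d/\alpha$ governing the Laplace mechanism of Section \ref{Section Scenario 1}, which is precisely the source of the stronger recovery guarantees in the coordinate global regime.
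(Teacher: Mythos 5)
Your proposal is correct and follows essentially the same route as the paper's proof: decompose the Hamming loss over coordinates in and out of the support, use Proposition \ref{Prop Expectation of the private data scenario 2} together with symmetry to get $\Eb[Z^i_j]=0$ off the support and the Cafarelli/Mill's bound \eqref{ineq_Mills} to get $\Eb[Z^i_j]\geq C_1$ on the support, then apply Hoeffding's inequality to the bounded variables $Z^i_j\in[-B,B]$. The only cosmetic difference is that you bound the tail probability directly from $\theta_j\geq a\geq 2\sigma$, while the paper first passes through the monotonicity of $\sgn$ to reduce $\theta_j$ to $a$; both are equivalent, and your explicit remark that $|Z^i_j|\leq B$ for even $d$ (first coordinate shrunk) is a correct and useful detail that the paper leaves implicit.
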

The proof of Proposition \ref{Prop Upper Bound Large a scenario 2} can be found in section \ref{App Proof upper bound large a scenario 2} of the Appendix.
Note that we can provide similar results on the class  $\Theta_d(s,a)$ considering the selector $\hat{\eta}$.
The  upper bounds are the same as for the class $\Theta_d^+(s,a)$ up to a multiplicative factor 2 that comes from the use in the proof of the two-sided Hoeffding's inequality instead of the one-sided inequality.
Since $K_d\leq C\sqrt d$ for $d$ large enough,  we obtain that a sufficient condition for almost full recovery to be possible when $a\geq 2\sigma$ is that $\frac{n(e^\alpha-1)^2}{(e^\alpha+1)^2 d} \gtrsim \log(d/s)$.
Moreover, using that $(e^\alpha-1)^2/(e^\alpha+1)^2\geq 0.2\alpha^2$ if $\alpha\leq 1$, we obtain that a sufficient condition for almost full recovery to be possible when $a\geq 2\sigma$ and $\alpha\leq 1$ is that $n\alpha^2/d \gtrsim \log(d/s)$.
This improves the result we obtained when we considered only privacy mechanisms acting coordinates by coordinates for which we needed $n\alpha^2/d^2\gtrsim \log(d/s)$.
We now deal with the case $a \ll \sigma$.
\begin{proposition}\label{Prop Upper Bound Small a scenario 2}
Let $a>0$ and $a\leq 2\sigma$. If $\tau$ is chosen such that $\tau< 2p(2)a/\sigma,
$
then it holds for all $\theta\in\Theta_d^+(s,a)$,
$$
\Eb\left[\frac{1}{s}\vert\hat{\eta}^+-\eta\vert \right]\leq \frac{d-\vert S\vert}{s}\exp\left(-\frac{n\tau^2}{2B^2 }\right)+\frac{\vert S\vert}{s}\exp\left(-\frac{n(2p(2)a/\sigma-\tau)^2}{2B^2}\right) ,
$$
where $S$ denotes the support of $\theta$.
\end{proposition}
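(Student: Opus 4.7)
The plan is to adapt the proof of Proposition \ref{Prop Upper Bound Large a scenario 2} to the small-$a$ regime by replacing the uniform lower bound $C_1$ on $\Eb[Z^{i}_j]$ for $j\in S$ by a quantity linear in $a$. I first decompose the expected Hamming loss coordinate-wise,
\begin{equation*}
\Eb\left[\frac{1}{s}\vert\hat{\eta}^+-\eta\vert\right]=\frac{1}{s}\sum_{j\notin S}\mathbb{P}\left(\frac{1}{n}\sum_{i=1}^{n}Z^{i}_j\geq\tau\right)+\frac{1}{s}\sum_{j\in S}\mathbb{P}\left(\frac{1}{n}\sum_{i=1}^{n}Z^{i}_j<\tau\right),
\end{equation*}
which separates false positives from false negatives.

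The next step is to control the mean $\mu_j:=\Eb[Z^{i}_j]$. By Proposition \ref{Prop Expectation of the private data scenario 2}, $\mu_j=\Eb[\sgn(X^{i}_j)]=2\mathbb{P}(X^{i}_j\geq 0)-1=2\int_0^{\theta_j/\sigma}p(u)\,du$, using the symmetry of $\xi^{i}_j$ around zero. For $j\notin S$ this yields $\mu_j=0$, while for $j\in S$ I claim $\mu_j\geq 2p(2)a/\sigma$. Since $p$ is symmetric and log-concave, its potential $\phi$ is convex and even, hence non-decreasing on $[0,\infty)$, so $p=\exp(-\phi)$ is non-increasing there. If $\theta_j/\sigma\leq 2$, then
\begin{equation*}
\mu_j=2\int_0^{\theta_j/\sigma}p(u)\,du\geq 2(\theta_j/\sigma)\,p(\theta_j/\sigma)\geq 2p(2)\,\theta_j/\sigma\geq 2p(2)a/\sigma,
\end{equation*}
and if $\theta_j/\sigma>2$, then $\mu_j\geq 2\int_0^{2}p(u)\,du\geq 4p(2)\geq 2p(2)a/\sigma$, invoking the assumption $a\leq 2\sigma$ in the last step.

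Finally, I apply Hoeffding's inequality to the i.i.d.\ random variables $Z^{1}_j,\ldots,Z^{n}_j$, which lie in $[-B,B]$ by construction of the coordinate global mechanism (the rescaling factor $(d-2)/(2(d-1))$ on the first coordinate when $d$ is even only tightens this bound). For $j\notin S$ this directly gives $\mathbb{P}(n^{-1}\sum_i Z^{i}_j\geq\tau)\leq\exp(-n\tau^2/(2B^2))$. For $j\in S$, recentering at $\mu_j$ and invoking $\tau<2p(2)a/\sigma\leq\mu_j$, Hoeffding yields
\begin{equation*}
\mathbb{P}\left(\frac{1}{n}\sum_{i=1}^{n}Z^{i}_j<\tau\right)\leq\mathbb{P}\left(\frac{1}{n}\sum_{i=1}^{n}(\mu_j-Z^{i}_j)>\mu_j-\tau\right)\leq\exp\left(-\frac{n(\mu_j-\tau)^2}{2B^2}\right)\leq\exp\left(-\frac{n(2p(2)a/\sigma-\tau)^2}{2B^2}\right).
\end{equation*}
Summing the $d-|S|$ null-coordinate contributions and the $|S|$ active-coordinate contributions and dividing by $s$ delivers exactly the announced bound.

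The main (and essentially only) technical point is the lower bound $\mu_j\geq 2p(2)a/\sigma$ for $j\in S$, which leverages monotonicity of a symmetric log-concave density on $[0,\infty)$ together with the constraint $a\leq 2\sigma$; everything else is a routine application of Hoeffding's inequality to bounded random variables, exactly as in Proposition \ref{Prop Upper Bound Large a scenario 2}.
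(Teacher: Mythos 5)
Your proof is correct and follows essentially the same route as the paper's: the same false-positive/false-negative decomposition, the same appeal to Proposition \ref{Prop Expectation of the private data scenario 2} and boundedness $Z^{i}_j\in[-B,B]$, Hoeffding on each side, and the same key lower bound $\Eb[\sgn(X^{i}_j)]\geq 2p(2)a/\sigma$ via unimodality of the symmetric log-concave density at $0$. The only cosmetic difference is that the paper first reduces to $\theta_j=a$ by monotonicity of $\sgn$ (so it needs only the single case $a/\sigma\leq 2$), whereas you argue directly with $\theta_j$ and split on whether $\theta_j/\sigma\leq 2$; the two are interchangeable.
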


The proof of Proposition \ref{Prop Upper Bound Small a scenario 2} can be found in Appendix \ref{App Proof upper bound small a scenario 2}.

\begin{corollary}\label{Corollary Sufficient Conditions for AFR small a, scenario 2}
Assume that $\alpha/d\rightarrow 0$, $n\alpha^2/d\rightarrow +\infty$ and $\sigma \sqrt{d}/(\alpha\sqrt{n})\ll a\leq 2 \sigma$.
The selector $\hat{\eta}^+$ defined by \eqref{Selecteur3} with $\tau=p(2)a/\sigma$ satisfies for $n,d$ large enough
$$
\sup_{\theta\in\Theta_d^+(s,a)}\frac{1}{s} \Eb_{Q(P_\theta^{\otimes n})} \vert\hat{\eta}^+(Z^{1},\ldots,Z^{d}))-\eta\vert
\leq \frac{d}{s}\exp\left(- \frac{ n(e^\alpha-1)^2p^2(2)a^2}{2\sigma^2(e^\alpha+1)^2K_d^2} \right).$$
In particular, if $\alpha\in(0,1]$, if $n\alpha^2/d\rightarrow +\infty$ with $n\alpha^2/d\gg \log(d) $ then it holds
\begin{equation*}\label{Eq ER small a scenario 2}
  \sup_{\theta\in\Theta_d^+(s,a)} \frac 1s \Eb_{Q(P_\theta^{\otimes n})} \vert\hat{\eta}^+(Z^{1},\ldots,Z^{d}))-\eta\vert
  \rightarrow 0,
\end{equation*}
for all $a$ satisfying $\sigma\sqrt{\frac{d}{n\alpha^2}}\sqrt{\log(d/s) }\ll a\leq 2\sigma$; and also
\begin{equation*}\label{Eq AFR small a scenario 2}
  \sup_{\theta\in\Theta_d^+(s,a)}\Eb_{Q(P_\theta^{\otimes n})}\vert\hat{\eta}^+(Z^{1},\ldots,Z^{d}))-\eta\vert\rightarrow 0,
\end{equation*}
for all $a$ satisfying $\sigma\sqrt{\frac{d}{n\alpha^2}}\sqrt{\log(d) }\ll a\leq 2\sigma$.
\end{corollary}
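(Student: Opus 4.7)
The plan is to derive this corollary as a direct consequence of Proposition \ref{Prop Upper Bound Small a scenario 2}, simplifying with the particular choice $\tau = p(2)a/\sigma$ and then extracting the asymptotic regimes. First I would verify the admissibility condition for $\tau$: since we set $\tau = p(2)a/\sigma$ and the hypothesis of Proposition \ref{Prop Upper Bound Small a scenario 2} requires $\tau < 2p(2)a/\sigma$, this holds trivially for $a > 0$. Plugging this $\tau$ into the proposition, both exponents become identical because $2p(2)a/\sigma - \tau = p(2)a/\sigma = \tau$, and so the two terms sum to
\[
\frac{d}{s}\exp\!\left(-\frac{np^2(2)a^2}{2\sigma^2 B^2}\right).
\]

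Next I would substitute the value $B = \frac{e^\alpha+1}{e^\alpha-1}K_d$ from \eqref{Kd}, yielding $1/B^2 = (e^\alpha-1)^2/((e^\alpha+1)^2 K_d^2)$, which produces exactly the displayed non-asymptotic bound of the corollary. At this point the proof becomes entirely asymptotic: I need to show that under the stated hypotheses the exponent dominates $\log(d/s)$ (for the first conclusion) or $\log(d)$ (for the second).

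For that step I would use two ingredients already available in the paper. First, the elementary inequality $(e^\alpha-1)^2/(e^\alpha+1)^2 \geq c'\alpha^2$ for $\alpha \in (0,1]$ (with $c'$ an absolute positive constant, mentioned in the discussion following Proposition \ref{Prop Upper Bound Large a scenario 2} with $c' = 0.2$). Second, Lemma \ref{Lem Asymptotic Property Kd}, which gives $K_d^2 \lesssim d$ for $d$ large enough. Combining these yields an exponent lower-bounded, up to a constant depending only on $c$ and $p(2)$, by $n\alpha^2 a^2/(\sigma^2 d)$. Hence the risk bound can be rewritten, for $n,d$ large and $\alpha\in(0,1]$, as
\[
\sup_{\theta \in \Theta_d^+(s,a)} \frac{1}{s}\,\mathbb{E}\,|\hat\eta^+ - \eta| \;\lesssim\; \exp\!\left(\log\!\left(\frac{d}{s}\right) - C\,\frac{n\alpha^2 a^2}{\sigma^2 d}\right).
\]

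The first asymptotic conclusion (almost full recovery) then follows immediately: if $a \gg \sigma\sqrt{d/(n\alpha^2)}\sqrt{\log(d/s)}$, the exponent diverges to $-\infty$. The second conclusion (exact recovery) requires absorbing the extra factor $1/s$, i.e.\ bounding $d\exp(\cdots) \to 0$, which happens under the stronger threshold $a \gg \sigma\sqrt{d/(n\alpha^2)}\sqrt{\log(d)}$. I do not anticipate any serious technical obstacle here, since Proposition \ref{Prop Upper Bound Small a scenario 2} already encapsulates the concentration argument via Hoeffding and Proposition \ref{Prop Expectation of the private data scenario 2} handles the bias; the corollary is essentially a bookkeeping exercise that matches the exponential bound against the relevant logarithmic scales. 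The only minor delicacy is checking that the hypothesis $a \gg \sigma\sqrt{d/(n\alpha^2)}$ guarantees that the threshold $\tau = p(2)a/\sigma$ is asymptotically large enough for the separation $\tau$ from zero on the null coordinates to yield a useful bound after invoking $K_d^2\lesssim d$, but this is automatic from the stated assumption.
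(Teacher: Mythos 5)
Your proposal is correct and follows exactly the paper's (very terse) argument: plug $\tau = p(2)a/\sigma$ into Proposition \ref{Prop Upper Bound Small a scenario 2}, note the two exponents coincide so the prefactor collapses to $d/s$, substitute $B = \frac{e^\alpha+1}{e^\alpha-1}K_d$ to get the displayed bound, and then pass to the asymptotic statements via $(e^\alpha-1)^2/(e^\alpha+1)^2 \geq 0.2\alpha^2$ for $\alpha\in(0,1]$ and $K_d^2 \lesssim d$ from Lemma \ref{Lem Asymptotic Property Kd}. Two cosmetic quibbles: the implicit constant in the final exponent depends on $p(2)$ and on the absolute constant from Stirling, not on the strong log-concavity parameter $c$; and the closing remark about $\tau$ being ``asymptotically large enough'' is unnecessary, since the only admissibility requirement is $\tau < 2p(2)a/\sigma$, which holds identically.
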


The first statement in Corollary \ref{Corollary Sufficient Conditions for AFR small a, scenario 2} is a direct consequence of Proposition \ref{Prop Upper Bound Small a scenario 2}.
The second statement is a direct consequence of the first one where we have used $(e^\alpha-1)^2/(e^\alpha+1)^2\geq 0.2\alpha^2$ for $\alpha\in(0,1]$ and $K_d\leq C\sqrt{d}$ for $d$ large enough.
In the next subsection, we complement these results with a lower bound.
This will enable us to exhibit a value $a^*$ such that exact recovery is impossible for all $a\leq a^*$ and possible for $a\gg a^*$ under the assumptions $\alpha\in(0,1]$ and  $n\alpha^2/d\rightarrow \infty$ with $n\alpha^2/d\gg \log(d)$.

\subsection{Lower bound}
Recall that $P_0$ denotes the distribution of $\sigma\xi_1$ and $P_a$ that of $a+ \sigma \xi_1$ and denote by $\chi^2(P_0,P_a)$ the chi-square discrepancy between the two distributions.
\begin{proposition}\label{Prop Lower Bound scenario 2}
%\textcolor{red}{Assume that for all $(x,y)\in\mathbb{R}^{2}$ and $\theta\in(0,1)$, \begin{equation} \limsup_{x\rightarrow \infty}{\left|\frac{\phi(x+a)}{\phi(x)}-1\right|}<1,\label{eq:cond_phi_bound_scenario} \end{equation} where $\phi$ is a finite strongly convex potential of the noise coordinates $\xi_i$ (see Section \ref{subsec:SLC}).}
For any $a>0$ such that $\chi^2(P_0,P_a) < \infty$, $\alpha>0$, $d\geq 4$, $1\leq s\leq d$, $n\geq 1$, we have

$$
\inf_{Q\in \Qc_{\alpha}}\inf_{\hat{\eta}\in\Tc}\sup_{\theta\in\Theta_d^+(s,a)}\Eb_{Q(P_\theta^{\otimes n})}\vert\hat{\eta}-\eta\vert\geq \frac{1}{4}\left(1- \frac{2n(e^\alpha-1)^2}{d\log(d)} \chi^2(P_0,P_a) \right).
$$
Assume now that the measure $P^{\xi_1}$ of the noise coordinates is strongly log-concave, with density $p=\exp(-\phi)$, with a potential $\phi$ that has a curvature bounded from above by a constant $c_+$ as in \eqref{eq:upper_bound_hessian_intro}. Then
\begin{equation}\label{eq:upper_bound_chi2}
    \chi^{2}\left(P_{0},P_{a}\right)\leq\exp\left(c_{+}\left(\frac{a}{\sigma}\right)^{2}\right)-1.
\end{equation}

%Note that condition \eqref{eq:cond_phi_bound_scenario} ensures that $\alpha$ is finite. If the noise is Gaussian, then it holds $\alpha=\exp(a^2/\sigma^2)$.

\end{proposition}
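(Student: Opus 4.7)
The statement splits into two independent parts --- the $\chi^2$ upper bound (purely analytic) and the minimax lower bound (information theoretic) --- which I plan to treat separately.

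For the $\chi^2$ bound, I would first rescale $u=x/\sigma$ to reduce the task to showing $\int p(u)^2/p(u-\tilde a)\,du\leq e^{c_+\tilde a^2}$, with $\tilde a=a/\sigma$ and $p=e^{-\phi}$ the standardised noise density. The upper-curvature condition yields the Taylor-type inequality $\phi(u-\tilde a)-\phi(u)\leq -\tilde a\,\phi'(u)+c_+\tilde a^2/2$, so the integral is at most $e^{c_+\tilde a^2/2}\,h(\tilde a)$ where $h(t):=\Eb_p[e^{-t\phi'(X)}]$. It then remains to prove $h(t)\leq e^{c_+t^2/2}$. The symmetry of $p$ gives $h(0)=1$ and $h'(0)=0$; differentiating under the integral and integrating by parts via $p'=-\phi' p$ (boundary terms vanish thanks to the Gaussian-rate decay of $p$ guaranteed by strong log-concavity) yields
\begin{equation*}
h'(t)=t\,\Eb_p\bigl[\phi''(X)\,e^{-t\phi'(X)}\bigr]\leq c_+\,t\,h(t),
\end{equation*}
and a Gr\"onwall argument then gives $h(t)\leq e^{c_+t^2/2}$, concluding the proof of \eqref{eq:upper_bound_chi2}.

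For the minimax lower bound, I would restrict the supremum to the $d$ one-sparse parameters $\theta^{(j)}=a\,e_j$, $j=1,\ldots,d$, which all lie in $\Theta_d^+(s,a)$, and place a uniform prior $J$ on $\{1,\ldots,d\}$. Since $|\hat\eta-e_j|\geq \mathbf{1}\{\hat\eta\neq e_j\}$, the supremum risk dominates $\mathbb P(\hat\eta\neq e_J)$. Defining a decoder $\hat J$ from $\hat\eta$ so that $\hat\eta=e_j\Rightarrow \hat J=j$, Fano's inequality gives
\begin{equation*}
\mathbb P(\hat\eta\neq e_J)\ \geq\ 1-\frac{I(J;Z^{1:n})+\log 2}{\log d}\ \geq\ \frac{1}{2}-\frac{I(J;Z^{1:n})}{\log d},
\end{equation*}
where the last step uses $\log 2/\log d\leq 1/2$ valid for $d\geq 4$. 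A short algebraic rearrangement then shows that this implies the stated $\frac{1}{4}(1-\cdots)$ form, provided one can establish $I(J;Z^{1:n})\leq n(e^\alpha-1)^2\chi^2(P_0,P_a)/d$ (the stated bound is trivial when the parenthesis is negative).

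The crux of the argument is therefore the mutual-information bound. The main algebraic input is the exact orthogonality $\int p_{ae_j}^{\otimes d}\,p_{ae_{j'}}^{\otimes d}/p_0^{\otimes d}\,dx=1$ for $j\neq j'$, which is immediate from the coordinate-wise product structure of the hypotheses and yields $\chi^2(\bar P,P_0^{\otimes d})=\chi^2(P_0,P_a)/d$ (using $\chi^2(P_a,P_0)=\chi^2(P_0,P_a)$ valid for even $p$) for the Bayes mixture $\bar P:=d^{-1}\sum_j P_{ae_j}^{\otimes d}$. Combining this with the Duchi--Jordan--Wainwright strong data-processing inequality $\mathrm{KL}(Q\mu\|Q\nu)\leq 4(e^\alpha-1)^2\,\mathrm{TV}(\mu,\nu)^2$ applied to $(\bar P,P_0^{\otimes d})$, together with $\mathrm{TV}^2\leq \chi^2/4$, produces the crucial $1/d$ factor that is absent from any naive per-hypothesis DJW contraction. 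The hardest step will be lifting this single-sample mixture contraction to the full $n$-sample mutual information: because $\bar P^{Z,n}$ is not a product measure, KL tensorisation does not apply directly, so one must combine the entropy subadditivity $I(J;Z^{1:n})\leq n\,I(J;Z^1)$ with a careful comparison between $I(J;Z^1)$ and the mixture-versus-null divergence $\mathrm{KL}(\bar P^{Z,1}\|P_0^{Z,1})$.
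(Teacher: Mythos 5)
Your proof of the $\chi^2$ bound \eqref{eq:upper_bound_chi2} is correct, but takes a different route from the paper. You apply one Taylor step to peel off $e^{c_+\tilde a^2/2}$, reduce to the exponential moment $h(t)=\Eb_p[e^{-t\phi'(X)}]$, and then establish $h(t)\le e^{c_+t^2/2}$ by a differential inequality $h'(t)\le c_+\,t\,h(t)$ (integration by parts, then Gr\"onwall). The paper instead applies the Taylor bound twice, once for $\phi(x)-\phi(x-\bar a)$ and once for $\phi(x-2\bar a)-\phi(x-\bar a)$, adds to get $\phi(x)-2\phi(x-\bar a)\le-\phi(x-2\bar a)+c_+\bar a^2$, and integrates directly; this avoids differentiating under the integral and the boundary-term justification. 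Both proofs are valid; yours is a touch more technical (you need to justify the integration by parts and Gr\"onwall, and the Gaussian decay of $p$ under strong log-concavity), while the paper's is a one-line pointwise estimate. Also note your remark that "$h'(0)=0$" plays no role in the Gr\"onwall argument.

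For the minimax lower bound there is a genuine gap at the step you yourself flag as the hardest. You correctly reduce, via Fano over the $d$ one-sparse hypotheses and the estimate $\log 2/\log d\le 1/2$ for $d\ge4$, to showing $I(J;Z^{1:n})\le n(e^\alpha-1)^2\chi^2(P_0,P_a)/d$, and you correctly compute the mixture orthogonality $\chi^2(\bar P,P_0^{\otimes d})=\chi^2(P_0,P_a)/d$. However, applying the DJW single-pair contraction $\mathrm{KL}(Q\mu\|Q\nu)\le4(e^\alpha-1)^2\mathrm{TV}(\mu,\nu)^2$ to the pair $(\bar P,P_0^{\otimes d})$ only controls the mixture-versus-null divergence $\mathrm{KL}(Q\bar P\,\|\,QP_0^{\otimes d})$, which is a genuinely different quantity from $I(J;Z^1)=\frac1d\sum_j\mathrm{KL}(QP_{ae_j}\,\|\,Q\bar P)$: the former measures how far the mixture is from a fixed reference, the latter the spread of the members around the mixture, and there is no general inequality between them (e.g., if the hypotheses are all equal, $I(J;Z^1)=0$ while the mixture-vs-null divergence can be large, and vice versa one can make the mixture coincide with the reference while the members differ). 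The variational bound $I(J;Z^1)\le\frac1d\sum_j\mathrm{KL}(QP_{ae_j}\,\|\,QP_0^{\otimes d})$ is valid but, as you note, applying DJW per hypothesis loses the $1/d$. What the paper actually uses is Proposition~2 of \cite{DuchiJordanWainwright2018MinimaxOptimalProcedure}, a \emph{mixture-level} private Fano bound in which the mutual information is controlled directly by $\frac{n(e^\alpha-1)^2}{d}\sup_{\gamma\in\Bb_\infty}\sum_i\varphi_{\omega_i}(\gamma)^2$; the paper's actual work is then the spectral computation showing that this supremum equals $\chi^2(P_0,P_a)$, which is where the $1/d$ and the orthogonality you identified enter rigorously. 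As written, your outline identifies the right structural ingredient (orthogonality of the alternatives in $\chi^2$) but does not supply the contraction step that converts it into a bound on $I(J;Z^{1:n})$, and the route you sketch (single-pair DJW on the Bayes mixture, then a ``careful comparison'') would fail.
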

Note that Inequality \eqref{eq:upper_bound_chi2} is sharp in the sense that in the Gaussian case, $c_+=1$ holds and Inequality \eqref{eq:lem_chi2} turns out to be an equality. Note also that log-concavity is actually not needed in Proposition \ref{Prop Lower Bound scenario 2}, since we only require an upper bound on the curvature of the potential $\phi$.

The proof of Proposition \ref{Prop Lower Bound scenario 2} is based on a private version of Fano's method, see Proposition~2 in \cite{DuchiJordanWainwright2018MinimaxOptimalProcedure}.
It can be found in Section \ref{App Proof Lower bound scenario 2} of the Appendix.
Using that $(e^\alpha-1)^2\leq 4\alpha^2$ for $\alpha\in(0,1)$ and $\exp(c_{+}x^2)-1\leq Lx^2$ for $0\leq x\leq 2$ and some constant $L$ only depending on $c_+$ (e.g. $L=(\exp(2c_+)-1)/2$), Proposition \ref{Prop Lower Bound scenario 2} immediately shows the following.
\begin{corollary}
Let $\alpha\in(0,1)$. If $n\alpha^2/(d\log d)\leq C/(32L)$ for some constant $C\in(0,1)$ then it holds
$$
\inf_{Q\in \Qc_{\alpha}}\inf_{\hat{\eta}\in\Tc}\sup_{\theta\in\Theta_d^+(s,a)}\Eb_{Q(P_\theta^{\otimes n})}\vert\hat{\eta}-\eta\vert\geq \frac{1}{4}\left(1-C\right)>0,
$$
for all $a\leq 2\sigma$.
\end{corollary}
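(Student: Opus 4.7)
The corollary is a direct quantitative consequence of Proposition~\ref{Prop Lower Bound scenario 2}: my plan is to substitute two elementary majorants, one for the privacy factor $(e^\alpha-1)^2$ and one for the chi-square $\chi^2(P_0,P_a)$, into the right-hand side of the lower bound, and then check that the hypothesis on the effective sample size $n\alpha^2/(d\log d)$ forces the subtracted quantity inside the parentheses to be at most $C$, so that what remains is $\tfrac14(1-C)>0$.

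For the privacy factor, I would use that $\alpha\mapsto e^\alpha-1$ is convex on $[0,1]$ and vanishes at $0$, which gives $e^\alpha-1\le (e-1)\alpha\le 2\alpha$ and hence $(e^\alpha-1)^2\le 4\alpha^2$ for $\alpha\in(0,1)$, matching the bound already mentioned just above the corollary. For the chi-square, Proposition~\ref{Prop Lower Bound scenario 2} itself supplies $\chi^2(P_0,P_a)\le \exp(c_+(a/\sigma)^2)-1$. Setting $u=(a/\sigma)^2\in[0,4]$ (using $a\le 2\sigma$) and invoking convexity of $u\mapsto \exp(c_+u)-1$ on $[0,4]$ together with its vanishing at $0$, I would get a linear majorant $\exp(c_+u)-1\le L\,u$ on $[0,4]$, with $L$ a constant depending only on $c_+$ (for instance $L=(\exp(4c_+)-1)/4$, obtained by linear interpolation between $0$ and $4$). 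In particular $\chi^2(P_0,P_a)\le L(a/\sigma)^2\le 4L$.

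Plugging both bounds into the lower bound of Proposition~\ref{Prop Lower Bound scenario 2} then yields
\[
\frac{2n(e^\alpha-1)^2}{d\log d}\,\chi^2(P_0,P_a)\ \le\ \frac{2n\cdot 4\alpha^2}{d\log d}\cdot 4L\ =\ \frac{32L\,n\alpha^2}{d\log d}\ \le\ C,
\]
the last inequality being the standing hypothesis $n\alpha^2/(d\log d)\le C/(32L)$. Substituting back into the proposition gives
\[
\inf_{Q\in\Qc_{\alpha}}\inf_{\hat\eta\in\Tc}\sup_{\theta\in\Theta_d^+(s,a)}\Eb_{Q(P_\theta^{\otimes n})}|\hat\eta-\eta|\ \ge\ \tfrac14(1-C)>0
\]
uniformly in $a\le 2\sigma$, which is the claim. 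There is essentially no obstacle here: all the substantive work is done in the preceding proposition (the private Fano argument and the chi-square estimate for strongly log-concave shifts), and the corollary only repackages that lower bound in a form that makes the phase transition in $n\alpha^2/(d\log d)$ immediately visible. The only mild care needed is to pick a clean linear majorant of $u\mapsto \exp(c_+u)-1$ on $[0,4]$ so that all dependence on $c_+$ is absorbed into the single constant $L$.
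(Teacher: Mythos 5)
Your proof is correct and takes essentially the same route as the paper: bound $(e^\alpha-1)^2\le 4\alpha^2$ for $\alpha\in(0,1)$, bound $\chi^2(P_0,P_a)$ by a linear (chord) majorant of $u\mapsto\exp(c_+u)-1$ over $u=(a/\sigma)^2\in[0,4]$, and substitute into Proposition~\ref{Prop Lower Bound scenario 2}. One useful remark: your choice $L=(\exp(4c_+)-1)/4$ is in fact the correct constant for the full range $a\le 2\sigma$, whereas the example $L=(\exp(2c_+)-1)/2$ quoted in the text just before the corollary satisfies $\exp(c_+(a/\sigma)^2)-1\le L(a/\sigma)^2$ only for $a\le\sqrt{2}\,\sigma$ (it is the chord of $u\mapsto\exp(c_+u)-1$ on $[0,2]$ rather than $[0,4]$), so your version quietly repairs a small slip in the paper's bookkeeping of constants.
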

This shows that exact recovery is impossible for all $a\leq 2\sigma$ if $n\alpha^2/(d\log d)\leq C/(32L)$ for some constant $C\in(0,1)$.
%Note that unlike the coordinate local case, the lower bound provided by Proposition \ref{Prop Lower Bound scenario 2} does not enable us to say that exact recovery is also impossible for $a\geq 2\sigma $ when $n\alpha^2/d\log d$ is bounded from above.
Proposition \ref{Prop Lower Bound scenario 2} also implies that exact recovery is impossible if $a\leq \sigma \sqrt{\log(1+Cd\log d/(8n\alpha^2))/c_+}$ for some constant $C\in(0,1)$. However, unlike the coordinate local case, the lower bound provided by Proposition \ref{Prop Lower Bound scenario 2} does not allow us to say that exact recovery is also impossible for $a\geq \max\{2\sigma,\sigma \sqrt{\log(1+Cd\log d/(8n\alpha^2))/c_+}\} $ when $n\alpha^2/(d\log d)$ is bounded from above.
The following corollary is also a direct consequence of Proposition \ref{Prop Lower Bound scenario 2}. It shows that when $n\alpha^2/(d\log d)\rightarrow \infty$, exact recovery is still impossible if $a$ is too small.
\begin{corollary}
If $\alpha\in(0,1)$, $n\alpha^2/d\rightarrow +\infty$ with $n\alpha^2/d\gg \log d$ and $a\leq (\sigma/(16L))\sqrt{d\log d/(n\alpha^2)} $ it holds
$$
\liminf_{d\rightarrow +\infty}\inf_{Q\in \Qc_{\alpha}}\inf_{\hat{\eta}\in\Tc}\sup_{\theta\in\Theta_d^+(s,a)}\Eb_{Q(P_\theta^{\otimes n})}\vert\hat{\eta}-\eta\vert\geq \frac{1}{8}.
$$
\end{corollary}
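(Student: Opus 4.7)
The plan is to obtain the corollary by directly substituting the chi-square upper bound \eqref{eq:upper_bound_chi2} into the first display of Proposition \ref{Prop Lower Bound scenario 2} and then estimating each factor using the hypotheses. The starting point is
$$
\inf_{Q\in \Qc_{\alpha}}\inf_{\hat{\eta}\in\Tc}\sup_{\theta\in\Theta_d^+(s,a)}\Eb_{Q(P_\theta^{\otimes n})}\vert\hat{\eta}-\eta\vert\geq \frac{1}{4}\left(1- \frac{2n(e^\alpha-1)^2}{d\log(d)} \chi^2(P_0,P_a) \right),
$$
so it suffices to show that, under the hypotheses of the corollary, the bracketed expression is bounded below by $1/2$ for all $d$ large enough.

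First I would use the regime assumption $n\alpha^2/d \gg \log d$, which yields $d\log d/(n\alpha^2)\to 0$, hence the constraint $a\leq (\sigma/(16L))\sqrt{d\log d/(n\alpha^2)}$ forces $a/\sigma\to 0$ and in particular $a/\sigma\leq 2$ for $d$ large enough. This is exactly the range in which the Taylor-type estimate used in the preceding corollary applies, namely
$$
\exp\!\bigl(c_+(a/\sigma)^2\bigr)-1\leq L(a/\sigma)^2,
$$
where $L=L(c_+)$ is the constant already introduced in the text. Combining this with \eqref{eq:upper_bound_chi2} and with the elementary estimate $(e^\alpha-1)^2\leq 4\alpha^2$ valid on $(0,1)$, and substituting the assumed bound on $a$, I get
$$
\frac{2n(e^\alpha-1)^2}{d\log d}\,\chi^2(P_0,P_a)\leq \frac{8\,L\,n\alpha^2}{d\log d}\Bigl(\frac{a}{\sigma}\Bigr)^2 \leq \frac{8\,L\,n\alpha^2}{d\log d}\cdot\frac{1}{(16L)^2}\cdot\frac{d\log d}{n\alpha^2}=\frac{1}{32L}.
$$

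Plugging this back into the lower bound of Proposition \ref{Prop Lower Bound scenario 2} yields, for all $d$ large enough,
$$
\inf_{Q\in \Qc_{\alpha}}\inf_{\hat{\eta}\in\Tc}\sup_{\theta\in\Theta_d^+(s,a)}\Eb_{Q(P_\theta^{\otimes n})}\vert\hat{\eta}-\eta\vert\geq \frac{1}{4}\Bigl(1-\frac{1}{32L}\Bigr),
$$
and, provided $L\geq 1/16$, this right-hand side is at least $1/8$, which gives the stated $\liminf$. The proof is essentially a chain of substitutions, so I do not expect any genuine obstacle; the only point meriting care is the linearization of the chi-square bound, which requires verifying $a/\sigma\leq 2$ for $d$ large, and this is precisely what the asymptotic hypothesis $n\alpha^2/d\gg\log d$ combined with the scaling of $a$ delivers.
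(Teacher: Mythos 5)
Your proof is correct and follows exactly the route the paper has in mind: the paper states the corollary is ``a direct consequence of Proposition \ref{Prop Lower Bound scenario 2},'' and your chain of substitutions --- the chi-square bound \eqref{eq:upper_bound_chi2}, the linearization $\exp(c_+x^2)-1\leq Lx^2$ on $[0,2]$, the elementary bound $(e^\alpha-1)^2\leq 4\alpha^2$ on $(0,1)$, and the assumed scaling of $a$ --- is precisely that argument, including the correct observation that $n\alpha^2/d\gg\log d$ forces $a/\sigma\to 0$ so the linearization is legitimate for large $d$.

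The one loose end is that you conclude conditionally, ``provided $L\geq 1/16$,'' without closing that loop. This is in fact automatic under the paper's normalization. The noise $\xi_1$ has unit variance, and by the Cram\'er--Rao bound together with $I(\xi_1)=\Eb[(\phi')^2]=\Eb[\phi'']\leq c_+$ one gets $1=\mathrm{Var}(\xi_1)\geq 1/I(\xi_1)\geq 1/c_+$, hence $c_+\geq 1$. Therefore $L=(e^{2c_+}-1)/2\geq (e^2-1)/2>3$, which is well above $1/16$, and the inequality $\tfrac14\left(1-\tfrac{1}{32L}\right)\geq\tfrac18$ follows unconditionally. You should add this sentence to make the proof self-contained; otherwise the argument is complete.
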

The lower bound of Proposition \ref{Prop Lower Bound scenario 2} combined with the upper bounds of Subsection \ref{Subsection Upper bound scenario 2}  exhibit a phase transition at the value $a^*$ (up to a logarithmic factor) such that exact recovery is impossible for all $a\leq a^*$ and possible for $a\gg a^*$ under the assumptions $\alpha\in(0,1]$ and  $n\alpha^2/d\rightarrow \infty$ with $ n\alpha^2/d %/\log(n\alpha^2/d)
\gg \log(d)$.
Precisely, set
$$
a^*=a^*(n,\alpha,d,\sigma)=\frac{\sigma}{16L}\sqrt{\frac{d\log d}{n\alpha^2}},
$$
where we recall that $L=(\exp(2c_+)-1)/2$. Proposition \ref{Prop Lower Bound scenario 2} combined with Corollary \ref{Corollary Sufficient Conditions for AFR small a, scenario 2} and Proposition \ref{Prop Upper Bound Large a scenario 2} give the following result.
\begin{corollary}
Assume that $\alpha\in(0,1]$ and $n\alpha^2/d\rightarrow +\infty$ with $ n\alpha^2/d %/\log(n\alpha^2/d)
\gg \log(d) $.
Then, exact recovery is impossible for all $a\leq a^*$ and is possible for all $a\gg a^*.$%\sqrt{\log(n\alpha^2/d)}$.
\end{corollary}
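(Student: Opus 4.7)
The plan is to assemble the final corollary directly from the lower bound of Proposition \ref{Prop Lower Bound scenario 2} and the upper bounds of Proposition \ref{Prop Upper Bound Large a scenario 2} and Corollary \ref{Corollary Sufficient Conditions for AFR small a, scenario 2}; no genuinely new ingredient is needed, only careful bookkeeping of constants and a split of the possibility direction according to whether $a$ is below or above $2\sigma$.

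For the impossibility half, fix any $a\leq a^*$. The assumption $n\alpha^2/d\gg \log d$ forces $a^*\to 0$, so $a\leq 2\sigma$ for $d$ large enough, which allows us to invoke the chi-square bound \eqref{eq:upper_bound_chi2} together with the inequality $\exp(c_+ x^2)-1\leq L x^2$ valid for $x\in[0,2]$ (for a suitable $L>0$ depending only on $c_+$). This gives $\chi^2(P_0,P_a)\leq L a^2/\sigma^2$. Plugging this bound and the elementary estimate $(e^\alpha-1)^2\leq 4\alpha^2$ (valid for $\alpha\in(0,1]$) into Proposition \ref{Prop Lower Bound scenario 2}, and substituting $a^2\leq (a^*)^2=\sigma^2 d\log d/(256 L^2 n\alpha^2)$, yields
\begin{equation*}
\frac{2n(e^\alpha-1)^2}{d\log d}\,\chi^2(P_0,P_a)\ \leq\ \frac{8 n\alpha^2}{d\log d}\cdot \frac{d\log d}{256\, L\, n\alpha^2}\ =\ \frac{1}{32 L}.
\end{equation*}
The constant $1/(16L)$ appearing in the definition of $a^*$ is precisely calibrated so that this quantity is strictly less than one; hence the right-hand side of Proposition \ref{Prop Lower Bound scenario 2} is bounded from below by a positive constant independent of $d$, and exact recovery is impossible.

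For the possibility half, consider any sequence $a=a_d$ with $a\gg a^*$ and split into two regimes. When $a\geq 2\sigma$, Proposition \ref{Prop Upper Bound Large a scenario 2} combined with $(e^\alpha-1)^2/(e^\alpha+1)^2\geq 0.2\alpha^2$ (valid for $\alpha\in(0,1]$) and $K_d^2\lesssim d$ gives
\begin{equation*}
\sup_{\theta\in\Theta_d^+(s,a)}\mathbb{E}\bigl|\hat{\eta}^+-\eta\bigr|\ \leq\ d\,\exp\!\bigl(-C\,n\alpha^2/d\bigr)
\end{equation*}
for some constant $C>0$, which tends to $0$ under the hypothesis $n\alpha^2/d\gg \log d$. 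When instead $a^*\ll a<2\sigma$, note that $a\gg a^*$ is equivalent to $a\gg 16L\cdot a^* = \sigma\sqrt{d\log d/(n\alpha^2)}$ since $L$ is a fixed constant, so Corollary \ref{Corollary Sufficient Conditions for AFR small a, scenario 2} applies in its exact recovery form and delivers the conclusion directly. Together the two regimes cover every $a\gg a^*$, which completes the proof.

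The only delicate point is matching constants in the lower bound so that the coefficient of $a^*$ is indeed small enough to produce a strictly positive residual in $\frac{1}{4}(1-1/(32L))$; the rest is a direct application of previously established propositions, so I expect no substantive obstacle beyond this calibration.
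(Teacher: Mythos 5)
Your proposal is correct and follows the same route the paper itself indicates: the corollary is stated as a direct combination of Proposition~\ref{Prop Lower Bound scenario 2} (lower bound, giving impossibility for $a\leq a^*$), Corollary~\ref{Corollary Sufficient Conditions for AFR small a, scenario 2} (upper bound for $a\ll 2\sigma$), and Proposition~\ref{Prop Upper Bound Large a scenario 2} (upper bound for $a\geq 2\sigma$), and the paper provides no separate proof beyond that remark. Your bookkeeping — noting $a^*\to 0$ so that $a\leq 2\sigma$ eventually, plugging $(e^\alpha-1)^2\leq 4\alpha^2$ and $\chi^2(P_0,P_a)\leq L(a/\sigma)^2$ into the lower bound to arrive at $1/(32L)$, and splitting the possibility direction at $a=2\sigma$ while observing that $a\gg a^*$ and $a\gg \sigma\sqrt{d\log d/(n\alpha^2)}$ are equivalent since they differ by the fixed constant $16L$ — matches exactly what the paper has in mind.
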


Note that Proposition \ref{Prop Lower Bound scenario 2} does not allow us to obtain impossibility results for almost full recovery in the regime $n\alpha^2/(d\log d)\gg 1$.
Its proof relies on  a private Fano's method  (Proposition 2 in \cite{DuchiJordanWainwright2018MinimaxOptimalProcedure}) applied with the family of distributions $\left\{\Nc(a\omega_i,\sigma^2 I_d), i=1,\ldots,d\right\}$ where $\omega_i\in \{0,1\}^d$ is defined by $\omega_{ij}=\delta_{ij}$ and $\delta$ is the Kronecker delta.
The same proof with $\omega_i$ defined by $\omega_{ij}=1$ if $j\in\llbr (i-1)s+1, is\rrbr$ and $\omega_{ij}=0$ otherwise for $i=1,\ldots \lfloor d/s\rfloor$, provides the following lower bound.
\begin{proposition}\label{Prop Lower bound AFR scenario 2}
%\textcolor{red}{Assume that for all $(x,y)\in\mathbb{R}^{2}$ and $\theta\in(0,1)$, \begin{equation*} \limsup_{x\rightarrow \infty}{\left|\frac{\phi(x+a)}{\phi(x)}-1\right|}<1, \end{equation*} where $\phi$ is a finite strongly convex potential of the noise coordinates $\xi_i$ (see Section \ref{subsec:SLC}).}
For any $a>0$ such that $\chi^2(P_0, P_a) < \infty$, $\alpha>0$, $n\geq 1$. If $d/s\leq 4$ then we have
$$
\inf_{Q\in \Qc_{\alpha}}\inf_{\hat{\eta}\in\Tc}\sup_{\theta\in\Theta_d^+(s,a)}\frac{1}{s}\Eb_{Q(P_\theta^{\otimes n})}\vert\hat{\eta}-\eta\vert\geq \frac{1}{4}\left(1- \frac{2n(e^\alpha-1)^2}{\lfloor d/s\rfloor \log(\lfloor d/s\rfloor)} \chi^2(P_0^{\otimes s},P_a^{\otimes s})
%\left[ \alpha^s-1\right]
\right).
$$
If the noise has a potential $\phi$ that is two times continuously differentiable, with curvature bounded from above by a constant $c_+$ as in \eqref{eq:upper_bound_hessian_intro}, then it holds $\chi^2( P_0^{\otimes s},P_a^{\otimes s} )=\exp(s \cdot c_+a^2/\sigma^2) - 1$.
\end{proposition}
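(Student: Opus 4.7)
My plan is to follow the proof of Proposition \ref{Prop Lower Bound scenario 2} essentially verbatim, merely replacing the single-coordinate hypotheses $\omega_i=e_i$ by the disjoint-block family $\omega_i$ described just before the statement, and then propagating the induced combinatorial changes (number of hypotheses, Hamming separation, chi-square between pairs).

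\textbf{Step 1 (reduction to testing).} Set $M=\lfloor d/s\rfloor$ and let $\theta_v=a\omega_v\in\Theta_d^+(s,a)$ for $v\in\llbr 1,M\rrbr$. Given a selector $\hat\eta$, define the test $\hat V$ as any index in $\llbr 1,M\rrbr$ minimising $|\hat\eta-\omega_v|$ in Hamming distance. Because the supports of distinct $\omega_v,\omega_{v'}$ are disjoint blocks of size $s$, one has $|\omega_v-\omega_{v'}|=2s$, and the triangle inequality for the Hamming distance gives $|\hat\eta-\omega_v|\geq s\cdot I(\hat V\neq v)$, hence
\[
\frac{1}{s}\Eb_{Q(P_{\theta_v}^{\otimes n})}|\hat\eta-\omega_v|\geq P_v(\hat V\neq v).
\]
Taking the maximum over $v$ on both sides bounds the worst-case normalised Hamming risk from below by the maximum testing error.

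\textbf{Step 2 (private Fano).} I would invoke Proposition~2 of \cite{DuchiJordanWainwright2018MinimaxOptimalProcedure} for the family $\{P_{\theta_v}^{\otimes n}\}_{v=1}^M$ observed through an $\alpha$-locally differentially private channel, exactly as in the proof of Proposition \ref{Prop Lower Bound scenario 2}, to obtain
\[
\max_{v\in\llbr 1,M\rrbr}P_v(\hat V\neq v)\geq\frac{1}{4}\left(1-\frac{2n(e^\alpha-1)^2\,\chi^2_{\max}}{M\log M}\right),
\]
where $\chi^2_{\max}$ is the relevant chi-square divergence produced by the DJW private mutual information bound applied to the distributions $P_{\theta_v}$.

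\textbf{Step 3 (tensorisation of $\chi^2$ and log-concave bound).} Since the coordinates of $X$ are independent and $\theta_v$ differs from $0$ in exactly the $s$ coordinates of the $v$-th block, the tensorisation identity $1+\chi^2(\otimes P_j,\otimes Q_j)=\prod_j(1+\chi^2(P_j,Q_j))$ yields
\[
\chi^2(P_{\theta_v},P_0^{\otimes d})=(1+\chi^2(P_0,P_a))^s-1=\chi^2(P_0^{\otimes s},P_a^{\otimes s}).
\]
Inserting this into Step 2 delivers the announced lower bound, with the factor $d$ of Proposition \ref{Prop Lower Bound scenario 2} naturally replaced by $M=\lfloor d/s\rfloor$. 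For the strongly log-concave complement, raising the bound $1+\chi^2(P_0,P_a)\leq\exp(c_+a^2/\sigma^2)$ (established in the proof of Proposition \ref{Prop Lower Bound scenario 2}) to the power $s$ gives $\chi^2(P_0^{\otimes s},P_a^{\otimes s})\leq\exp(sc_+a^2/\sigma^2)-1$, which is tight in the Gaussian case.

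\textbf{Main obstacle.} The only delicate point is to check that the chi-square term appearing in the private Fano bound of \cite{DuchiJordanWainwright2018MinimaxOptimalProcedure}, applied to the new family, reduces cleanly to $\chi^2(P_0^{\otimes s},P_a^{\otimes s})$ and not to its square, since the pairwise divergence between $P_{\theta_v}$ and $P_{\theta_{v'}}$ involves $2s$ differing coordinates. As in Proposition \ref{Prop Lower Bound scenario 2}, this is resolved by routing the private mutual information through a single reference distribution (the null $P_0^{\otimes d}$) rather than through pairwise differences; once this reduction is set up identically to the scalar case, the rest of the argument is mere bookkeeping.
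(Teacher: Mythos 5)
Your proposal follows the paper's own route: the paper dispatches Proposition \ref{Prop Lower bound AFR scenario 2} by remarking that the proof of Proposition \ref{Prop Lower Bound scenario 2} applies unchanged once the singleton hypotheses $\omega_i=e_i$ are replaced by disjoint $s$-blocks, and your three steps (reduction to testing via the $2s$-separated packing, private Fano \`a la \cite{DuchiJordanWainwright2018MinimaxOptimalProcedure}, tensorisation of $\chi^2$) carry out exactly that replacement with the induced bookkeeping $d\mapsto\lfloor d/s\rfloor$ and $\chi^2(P_0,P_a)\mapsto\chi^2(P_0^{\otimes s},P_a^{\otimes s})$. Two minor remarks. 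First, in the log-concave case you obtain (correctly, from Lemma \ref{lem:bound_chi2}) the \emph{inequality} $\chi^2(P_0^{\otimes s},P_a^{\otimes s})\leq\exp(sc_+a^2/\sigma^2)-1$, whereas the proposition states an equality; the equality only holds in the Gaussian case and looks like a slip in the paper, so your version is what is actually proved and what the lower bound needs. Second, your phrase ``routing the private mutual information through the null $P_0^{\otimes d}$'' slightly misdescribes the mechanism: in the proof of Proposition \ref{Prop Lower Bound scenario 2} the reference inside $\varphi_{\omega_i}$ is the uniform mixture $\bar f$, while $\bar p=P_0^{\otimes d}$ only serves as the base measure for the $L_2$ space, and it is the operator-norm computation on $K$ (which goes through unchanged because the block supports are disjoint, so $\langle f_{\omega_i}/\bar p,\,f_{\omega_j}/\bar p\rangle_{L_2(\bar p)}$ again takes only two values depending on whether $i=j$) that produces $\chi^2(P_0^{\otimes s},P_a^{\otimes s})$ as the unique nonzero eigenvalue — but since you already flag that this must be set up ``identically to the scalar case,'' this is an imprecision of description, not a gap.
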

Note that $\chi^2(P_0^{\otimes s},P_a^{\otimes s}) = (\chi^2(P_0,P_a)+1)^s - 1$.

However, this bound turns out to be suboptimal in the sense that when it holds $ n\alpha^2/d %/\log(n\alpha^2/d)
\gg \log(d/s) $ the combination of this bound with upper bounds in Proposition \ref{Prop Upper Bound Large a scenario 2} and Corollary \ref{Corollary Sufficient Conditions for AFR small a, scenario 2} allows us to exhibit the critical value $a^*$ for almost full recovery only up to a logarithmic factor times the sparsity $s$.
Indeed, on the one hand Proposition \ref{Prop Upper Bound Large a scenario 2} and Corollary \ref{Corollary Sufficient Conditions for AFR small a, scenario 2} prove that almost full recovery is possible for all $a\gg \sigma\sqrt{d/(n\alpha^2)}\sqrt{
\log(d/s) }$ in the regime  $ n\alpha^2/d \gg \log(d/s) $.
On the other hand Proposition \ref{Prop Lower bound AFR scenario 2} proves that, in the same regime, almost full recovery is impossible for $a\lesssim (\sigma/s)\sqrt{d/(n\alpha^2)}\sqrt{\log(d/s) }$ but does not allow us to say what happens for $(\sigma/s)\sqrt{d/(n\alpha^2)}\sqrt{\log(d/s) }\ll a \lesssim\sigma\sqrt{d/(n\alpha^2)}\sqrt{\log(d/s) }$.

\begin{table}
\centering
\begin{tabular}{|l|l|p{4cm}|l|}
    \hline
        & $a\lesssim \frac{\sigma}{s} \sqrt{\frac{\log (d/s)}{Nd}}$
        &  $\frac{\sigma}{s} \sqrt{\frac{\log (d/s)}{Nd}} \ll a \lesssim \frac{\sigma}{\sqrt{s}}$ & $a\gg \frac \sigma{\sqrt{s}}$\\
    \hline
   $s \frac{Nd}{\log (d/s)} \lesssim 1$ & impossible & impossible &  ? \\
   \hline
& & &\\
    $s \frac{Nd}{\log (d/s)} \gg 1$ &
    impossible &
    \begin{tabular}{l}
         possible, as soon as \\
         \\
         $a \gg \sigma \sqrt{\frac{\log (d/s)}{Nd}} $ \\
        \\
        if moreover\\
          \\
          $Nd \gg \log(d/s)$
    \end{tabular}
 & possible \\
          \hline
\end{tabular}
\caption{Almost full recovery of $\theta$ in either $\Theta^+_d(s,a)$ or $\Theta_d(s,a)$ in the Coordinate Global case. We have set $N=n\alpha^2/d^2$. }
\label{Table Almost full recovery scenario 2}
\end{table}

%%%%%%%%%%%%%%%%%%%%%%%%%%%%%%%%%%%

\appendix

\section{Proofs of Section \ref{Section Scenario 1}}

\subsection{Some auxiliary results for the proof of the lower bound}\label{App Auxialiary Results Lower bound scenario 1}

The proof of Theorem \ref{Thrm Lower Bound Scenario 1} strongly relies on the following result known as the Bayesian version of the Neyman-Pearson lemma.

\begin{theorem}[\cite{LehmannRomano2006testingStatisticalHypotheses}, Problem 3.10]\label{BayesianNeaymanPearson}
Let $P_0$ and $P_1$ be probability distributions possessing densities $p_0$ and $p_1$ with respect to a measure $\mu$.
Consider the problem of testing $H_0:P=P_0$ against $H_1 : P=P_1$, and suppose that known probabilities $\pi$ and $1-\pi$ can be assigned to $H_0$ and $H_1$ prior to the experiment.
Then the test $T^*$ given by
$$
T^*(X)=I((1-\pi) p_1(X)>\pi p_0(X))
$$
is a minimizer of the overall probability of error resulting from the use of a test $T$,
$$
\pi\Eb_0[T(X)]+(1-\pi)\Eb_1[1-T(X)].
$$
\end{theorem}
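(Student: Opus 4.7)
The plan is to reduce the Bayes risk to an integral with a signed integrand and then minimize it pointwise; this is the standard Bayes-decision argument for two simple hypotheses, a close cousin of the classical Neyman--Pearson lemma but without the constrained size.

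First, writing the expectations as integrals against $\mu$ and linearizing in $T$, one gets
\begin{align*}
\pi \Eb_0[T(X)] + (1-\pi)\Eb_1[1-T(X)]
&= \int T(x)\, \pi p_0(x)\, d\mu(x) + \int \bigl(1 - T(x)\bigr)(1-\pi) p_1(x)\, d\mu(x) \\
&= (1-\pi) + \int T(x)\bigl[\pi p_0(x) - (1-\pi) p_1(x)\bigr] d\mu(x).
\end{align*}
Setting $g(x) := \pi p_0(x) - (1-\pi) p_1(x)$, the additive constant $(1-\pi)$ does not depend on $T$, so minimizing the overall error is equivalent to minimizing $\int T(x) g(x)\, d\mu(x)$ over measurable tests $T:\Xc\to\{0,1\}$. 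The argument in fact extends to $[0,1]$-valued randomized tests since the objective remains linear in $T$.

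Second, I would carry out the pointwise minimization of the integrand. For every $x\in\Xc$ the quantity $T(x) g(x)$ is minimized by choosing $T(x) = 1$ when $g(x) < 0$ and $T(x) = 0$ when $g(x) \geq 0$. This rule is exactly $T^*(x) = I\bigl((1-\pi) p_1(x) > \pi p_0(x)\bigr) = I(g(x) < 0)$; measurability of $T^*$ follows from measurability of $g$, which is a linear combination of the two densities. Integrating the resulting pointwise inequality $T^*(x) g(x) \leq T(x) g(x)$ against $\mu$ and adding back the constant $(1-\pi)$ yields $\pi \Eb_0[T^*(X)] + (1-\pi)\Eb_1[1-T^*(X)] \leq \pi \Eb_0[T(X)] + (1-\pi)\Eb_1[1-T(X)]$, which is the asserted optimality of $T^*$.

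The only delicate point is the tie set $\{g = 0\}$, on which both $T(x)=0$ and $T(x)=1$ contribute zero to $\int T(x) g(x)\, d\mu(x)$; the strict inequality in the definition of $T^*$ is therefore harmless. Beyond this there is essentially no obstacle: the proof is a short integral decomposition followed by a pointwise minimization. Should one wish to extend the statement to randomized tests $T:\Xc\to[0,1]$, the same decomposition applies verbatim since the minimum of $t\mapsto t\, g(x)$ over $t\in[0,1]$ is attained at an extreme point determined by the sign of $g(x)$.
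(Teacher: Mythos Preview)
Your argument is correct and is the standard proof: write the Bayes risk as $(1-\pi)+\int T(x)\bigl[\pi p_0(x)-(1-\pi)p_1(x)\bigr]\,d\mu(x)$ and minimize the integrand pointwise, handling the tie set $\{g=0\}$ as you do. The paper does not give its own proof of this theorem; it simply cites it from Lehmann--Romano (Problem~3.10), so there is nothing to compare against beyond noting that your derivation is exactly the intended one.
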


The following lemmas are also useful to prove the lower bound.

\begin{lemma}\label{LBlemma1}
Let $b,c>0$.
Let $P$ and $Q$ be two probability measures having densities $p$ and $q$  with respect to some measure $\mu$.
It holds
$$
\int \min\{b p(x),c q(x) \} \dd \mu(x) \geq \frac{bc}{b+c}\left( \int \sqrt{p(x)q(x)}\dd \mu(x)\right)^2.
$$
\end{lemma}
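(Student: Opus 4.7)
The plan is to apply the Cauchy--Schwarz inequality with a carefully chosen factorization of the integrand $\sqrt{pq}$. On the set where both $p$ and $q$ are positive, I would write pointwise
\[
\sqrt{p(x)q(x)} \;=\; \sqrt{\min\{bp(x),cq(x)\}}\;\cdot\;\sqrt{\frac{p(x)q(x)}{\min\{bp(x),cq(x)\}}},
\]
noting that the set where $p=0$ or $q=0$ contributes nothing to $\int\sqrt{pq}\,d\mu$, so it may be harmlessly excluded from the integration.

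Applying Cauchy--Schwarz then gives
\[
\left(\int \sqrt{p(x)q(x)}\,d\mu(x)\right)^{2} \;\leq\; \int \min\{bp(x),cq(x)\}\,d\mu(x)\;\cdot\;\int \frac{p(x)q(x)}{\min\{bp(x),cq(x)\}}\,d\mu(x).
\]
The core observation is an elementary identity: if $bp(x)\leq cq(x)$ then $pq/\min\{bp,cq\} = q/b$, while if $bp(x)\geq cq(x)$ then $pq/\min\{bp,cq\} = p/c$. Hence pointwise
\[
\frac{p(x)q(x)}{\min\{bp(x),cq(x)\}} \;=\; \max\left\{\frac{q(x)}{b},\frac{p(x)}{c}\right\} \;\leq\; \frac{p(x)}{c} + \frac{q(x)}{b}.
\]
Integrating against $d\mu$ and using that $p$ and $q$ are densities yields the bound $1/c + 1/b = (b+c)/(bc)$.

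Plugging this back into the Cauchy--Schwarz estimate and rearranging gives exactly the stated inequality
\[
\int \min\{bp(x),cq(x)\}\,d\mu(x) \;\geq\; \frac{bc}{b+c}\left(\int \sqrt{p(x)q(x)}\,d\mu(x)\right)^{2}.
\]
There is no real obstacle here: the only nontrivial step is guessing the right factorization inside Cauchy--Schwarz, after which everything reduces to the pointwise identity relating $\min$, $\max$, and product of positive numbers. Care need only be taken on the set $\{p=0\}\cup\{q=0\}$, which is handled by restricting the integrals to $\{p>0, q>0\}$ without loss.
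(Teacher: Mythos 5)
Your proof is correct and follows essentially the same strategy as the paper: apply Cauchy--Schwarz after factoring $\sqrt{pq}$ through $\sqrt{\min\{bp,cq\}}$, and bound the complementary factor by observing that a max is dominated by a sum. The only cosmetic difference is that the paper writes the factorization as $\sqrt{\min\{bp,cq\}}\sqrt{\max\{bp,cq\}}$ and pulls the constant $bc$ out front, whereas you keep the $bc$ inside the second factor; the two are algebraically identical since $pq/\min\{bp,cq\}=\max\{bp,cq\}/(bc)$.
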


The case $b=c=1$ can be found in \cite{TsybakovIntroNonparametricEstimation} (lemma 2.3).
We generalize the proof for any $b,c>0$.

\begin{proof}
The Cauchy-Schwarz inequality yields
\begin{align*}
bc\left( \int \sqrt{p(x)q(x)}\dd \mu(x)\right)^2&=\left( \int \sqrt{bp(x)\cdot cq(x)}\dd \mu(x)\right)^2\\
&=\left( \int \sqrt{\min\{bp(x),cq(x)\}}\sqrt{\max\{bp(x),cq(x)\}}\dd \mu(x)\right)^2\\
&\leq \int \min\{bp(x),cq(x)\}\dd \mu(x) \int \max\{bp(x),cq(x)\}\dd \mu(x).
\end{align*}
In order to finish, we use that $\max \{u, \,  v\} \leq u+v$ and get $\int \max\{bp(x),cq(x)\}\dd \mu(x) \leq b \int p(x) \dd \mu(x) + c\int q(x) \dd \mu(x) = b+c$.
%Set $A=\{x: bp(x)\geq cq(x)\}$ and note that
%$$
% \int \min\{bp,cq\}\dd \mu+ \int \max\{bp,cq\}\dd \mu =\int_A cq \dd \mu+\int_{A^C}bp\dd \mu +\int_A bp\dd \mu  +\int_{A^C}cq\dd \mu =b+c.
%$$
%Thus,
%\begin{align*}
% bc\left( \int \sqrt{p(x)q(x)}\dd \mu(x)\right)^2 & \leq \int \min\{bp(x),cq(x)\}\dd \mu(x) \left[b+c-\int \min\{bp(x),q(x)\}\dd \mu(x)  \right]\\
% &\leq (b+c)\int \min\{bp(x),q(x)\}\dd \mu(x).
%\end{align*}
\end{proof}

In the proof of the lower bound, Lemma \ref{LBlemma1} will be combined with the following result whose proof can be found in \cite{TsybakovIntroNonparametricEstimation}.

\begin{lemma}\label{LBlemma2}
Let $P$ and $Q$ be two probability measures having densities $p$ and $q$  with respect to some measure $\mu$.
It holds
$$
\left( \int \sqrt{p(x)q(x)}\dd \mu(x)\right)^2\geq \exp(-\KL(P,Q)).
$$
\end{lemma}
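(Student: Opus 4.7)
The plan is to rewrite the squared Hellinger affinity as the square of an expectation under $P$ and then apply Jensen's inequality to the concave function $\log$. Concretely, I will first observe that on the set $\{p>0\}$ we have
\[
\int \sqrt{p(x)q(x)}\,\mathrm{d}\mu(x) = \int \sqrt{\tfrac{q(x)}{p(x)}}\, p(x)\,\mathrm{d}\mu(x) = \mathbb{E}_P\!\left[\sqrt{\tfrac{q}{p}(X)}\right],
\]
after adopting the standard convention that $\sqrt{q/p}=0$ wherever $p=0$ (the contribution of the set $\{p=0\}$ to the left-hand side integral is also zero, so no mass is lost). This converts the quantity of interest into a shape to which Jensen's inequality applies directly.

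Next I would take logarithms and square. Since $t\mapsto \log t$ is concave, Jensen's inequality yields
\[
\log\left(\int \sqrt{p q}\,\mathrm{d}\mu\right)^{2} = 2\log \mathbb{E}_P\!\left[\sqrt{\tfrac{q}{p}(X)}\right] \;\geq\; 2\,\mathbb{E}_P\!\left[\log\sqrt{\tfrac{q}{p}(X)}\right] = \mathbb{E}_P\!\left[\log \tfrac{q}{p}(X)\right].
\]
Recognising the right-hand side as $-\mathrm{KL}(P,Q)$ and exponentiating gives the claimed inequality
\[
\left(\int \sqrt{p(x)q(x)}\,\mathrm{d}\mu(x)\right)^{2} \geq \exp\!\left(-\mathrm{KL}(P,Q)\right).
\]

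There is essentially no serious obstacle: the only point requiring a word of care is the treatment of the null set $\{p=0\}$ (and of $\{q=0,p>0\}$, where $\log(q/p)=-\infty$). In the former case one uses the convention above so that the expectation under $P$ is unaffected, and in the latter case the inequality is trivial because $\mathrm{KL}(P,Q)=+\infty$ and the right-hand side becomes zero. Once this convention is in place, the argument is a one-line application of Jensen's inequality and nothing further is needed.
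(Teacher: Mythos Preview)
Your proof is correct and is exactly the standard Jensen's-inequality argument; the paper does not give its own proof but simply refers to \cite{TsybakovIntroNonparametricEstimation}, where precisely this argument appears. Your handling of the degenerate sets $\{p=0\}$ and $\{q=0,\,p>0\}$ is also appropriate.
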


\subsection{Proof of Theorem \ref{Thrm Lower Bound Scenario 1}}\label{App Proof Lower bound Scenario 1}

Let $Q\in\Qc_\alpha^{CL}$ and let  $\hat{\eta}$ be a separable selector.
Since $\hat{\eta}_j$ depends only on $(Z^{i}_j)_{i=1,\ldots,n}$, it holds
$$
\Eb_{Q(P_\theta^{\otimes n})}\vert \hat{\eta}(Z)-\eta\vert = \sum_{j=1}^d \Eb_{\otimes_{i=1}^nQ^{i}_jP_{\theta_j}}\vert \hat{\eta}_j(Z^{1}_j,\ldots,Z^{n}_j )-\eta_j\vert.
$$
Following the proof of Theorem 2.2 in \cite{butucea2018variableSelectionHammingLoss}, we denote by $\Theta'$ the set of all $\theta$ in $\Theta_d^+(s,a)$ such that exactly $s$ components of $\theta$ are equal to $a$ and the remaining $d-s$ components are equal to $0$.
Since $\Theta'$ is a subset of $\Theta_d^+(s,a)$, it holds
\begin{align*}
\sup_{\theta\in\Theta_d^+(s,a)}\frac{1}{s}\Eb_{Q(P_\theta^{\otimes n})}\vert\hat{\eta}(Z)-\eta\vert & \geq \frac{1}{s\vert\Theta'\vert}\sum_{\theta\in\Theta'} \sum_{j=1}^d\Eb_{\otimes_{i=1}^nQ^{i}_jP_{\theta_j}}\vert \hat{\eta}_j(Z^{1}_j,\ldots,Z^{n}_j )-\eta_j\vert\\
&=\frac{1}{s\vert\Theta'\vert}\sum_{j=1}^d \left(\sum_{\theta\in\Theta':\theta_j=0}\Eb_{\otimes_{i=1}^nQ^{i}_jP_0}(\hat{\eta}_j)+\sum_{\theta\in\Theta':\theta_j=a}\Eb_{\otimes_{i=1}^nQ^{i}_jP_a}(1-\hat{\eta}_j)\right)\\
&=\frac{1}{s}\sum_{j=1}^d \left(\left(1-\frac{s}{d}\right)\Eb_{\otimes_{i=1}^nQ^{i}_jP_0}(\hat{\eta}_j)+\frac{s}{d}\Eb_{\otimes_{i=1}^nQ^{i}_jP_a}(1-\hat{\eta}_j)\right)\\
&\geq \frac{1}{s}\sum_{j=1}^d\inf_{T\in[0,1]}\left(\left(1-\frac{s}{d}\right)\Eb_{\otimes_{i=1}^nQ^{i}_jP_0}(T)+\frac{s}{d}\Eb_{\otimes_{i=1}^nQ^{i}_jP_a}(1-T)\right).
\end{align*}
Set
$$
L^*_j=\inf_{T\in[0,1]}\left(\left(1-\frac{s}{d}\right)\Eb_{\otimes_{i=1}^nQ^{i}_jP_0}(T)+\frac{s}{d}\Eb_{\otimes_{i=1}^nQ^{i}_jP_a}(1-T)\right).
$$
Since $Q^{i}_j$ provides $\alpha_j$-differential privacy, the channel probabilities $Q^{i}_j(\cdot \mid x)$  have densities $z\mapsto q^{i}_j(z\mid x)$ with respect to some measure $\mu^{i}_j$.
Therefore, $dQ^{i}_jP_0(z)=m^{i}_{j,0}(z)\dd \mu^{i}_j(z)$, and $dQ^{i}_jP_a(z)=m^{i}_{j,a}(z)\dd \mu^{i}_j(z)$, where $m^{i}_{j,b}(z)=\int_\Rb q^{i}_j(z\mid x)dP_b(x)$, $b\in\{0,a\}$.
Thus, for $b\in\{0,a\}$, it holds $$
d(\otimes_{i=1}^nQ^{i}_jP_b)(y_1,\ldots,y_n)=[\prod_{i=1}^n m^{i}_{j,b}(y_i)]\dd\mu_j(y_1,\ldots,y_n),
$$ where $\mu_j=\mu^1_j\otimes\cdots\otimes\mu^n_j$.
According to Theorem \ref{BayesianNeaymanPearson}, the infimum $L^*_j$  is thus attained for $T=T^*_j$ given by
$$T^*_j(Y_1,\ldots,Y_n)=I\left(\frac{s}{d}\prod_{i=1}^nm^{i}_{j,a}(Y_i)>\left(1-\frac{s}{d}\right)\prod_{i=1}^nm^{i}_{j,0}(Y_i)\right).$$
Set $A_j=\{(y_1,\ldots,y_n)\in\Rb^n : \frac{s}{d}\prod_{i=1}^nm^{i}_{j,a}(y_i)>\left(1-\frac{s}{d}\right)\prod_{i=1}^nm^{i}_{j,0}(y_i)\}$.
\begin{align*}
\sup_{\theta\in\Theta_d^+(s,a)}\frac{1}{s}\Eb_{Q(P_\theta^{\otimes n})}\vert\hat{\eta}(Z)-\eta\vert
& \geq \frac{1}{s}\sum_{j=1}^d\left[ \left(1-\frac{s}{d}\right)\int_{A_j} [\prod_{i=1}^nm^{i}_{j,0}(y_i)]\dd\mu_j(y_1,\ldots,y_n) \right.\\
& \hspace{1cm}\left.+\frac{s}{d}\int_{A_j^C} [\prod_{i=1}^nm^{i}_{j,a}(y_i)]\dd\mu_j(y_1,\ldots,y_n)\right]\\
&=\frac{1}{s}\sum_{j=1}^d\int_{\Rb^n} \min\left\{\left(1-\frac{s}{d}\right)\prod_{i=1}^nm^{i}_{j,0}(y_i),\frac{s}{d} \prod_{i=1}^nm^{i}_{j,a}(y_i)\right\}\dd\mu_j(y_1,\ldots,y_n)\\
&\geq \left(1-\frac{s}{d}\right)\cdot\frac{1}{d}\sum_{j=1}^d\left( \int_{\Rb^n}\sqrt{\left( \prod_{i=1}^nm^{i}_{j,0}(y_i)\right)\left( \prod_{i=1}^nm^{i}_{j,a}(y_i)\right)}\dd\mu_j(y_1,\ldots,y_n) \right)^2\\
&\geq \left(1-\frac{s}{d}\right)\cdot\frac{1}{d}\sum_{j=1}^d\exp\left(-\KL\left(\otimes_{i=1}^nQ^{i}_jP_0 ,\otimes_{i=1}^nQ^{i}_jP_a \right) \right),
\end{align*}
where the two last inequalities follow from lemma \ref{LBlemma1} and lemma \ref{LBlemma2}.
Using the tensorization property of the Kullback-Leibler divergence we obtain
\begin{align*}
\sup_{\theta\in\Theta_d^+(s,a)}\frac{1}{s}\Eb_{Q(P_\theta^{\otimes n})}\vert\hat{\eta}(Z)-\eta\vert &\geq \left(1-\frac{s}{d}\right)\cdot\frac{1}{d}\sum_{j=1}^d\exp\left(-\sum_{i=1}^n\KL\left(Q^{i}_jP_0,Q^{i}_jP_a \right) \right)\\
&\geq \left(1-\frac{s}{d}\right)\cdot\frac{1}{d}\sum_{j=1}^d\exp\left(-4n (e^{\alpha/d}-1)^2\TV(P_0,P_a)^2\right)\\
&=\left(1-\frac{s}{d}\right)\exp\left(-4n (e^{\alpha/d}-1)^2\TV(P_0,P_a)^2\right),
\end{align*}
where the second inequality is a direct consequence of the strong data processing inequality in Theorem 1 of \cite{DuchiJordanWainwright2018MinimaxOptimalProcedure} showing the contraction property of privacy: $$\KL(QP_0, QP_a) + \KL(QP_a, QP_0) \leq (4 \wedge e^{2\alpha/d}) (e^{\alpha/d} - 1)^2 TV(P_0,P_a)^2$$ if $Q$ is $\alpha/d-$DP.
Since this result holds for all $Q\in \Qc_\alpha^{CL}$ and all separable selector $\hat{\eta}$, we obtain
$$
\inf_{Q\in \Qc_{\alpha}^{CL}}\inf_{\hat{\eta}\in\Tc}\sup_{\theta\in\Theta_d^+(s,a)}\frac{1}{s}\Eb_{Q(P_\theta^{\otimes n})}\vert\hat{\eta}-\eta\vert\geq \left(1-\frac{s}{d}\right)\exp\left(-4n (e^{\alpha/d}-1)^2\TV(P_0,P_a)^2\right).
$$
Note that the $TV$ distance is invariant to a scale parameter, thus $TV(P_0,P_a)$ can be calculated as the $TV$ distance between the distribution of $\xi_1$ and the same one shifted by $a/\sigma$.
The inequality $TV(P_0,P_a)\leq 1$, Pinsker's inequality and Inequality \eqref{eq:upper_bound_KL} of Lemma \ref{lem:Bound_KL} below, give
$$
TV(P_0,P_a)\leq \sqrt{\frac{\KL(P_0,P_a)}{2}}\leq \frac{a\sqrt{c_+}}{2\sigma},
$$
which implies the statement of Theorem \ref{Thrm Lower Bound Scenario 1}.

\begin{lemma}
\label{lem:Bound_KL}Consider that the measure $P^{\xi_1}$ of the noise coordinates is $c-$strongly log-concave
on $\mathbb{R}$, with density $p=\exp(-\phi)$, the convex function
$\phi$ thus being $c-$strongly convex for some constant $c>0$.
Recall that the measure $P_{0}$ is the distribution of the scaled noise coordinate $\sigma \xi_1$ and that $P_{a}$ is the image of $P_{0}$ by the translation
of $a$. It holds
\begin{equation}
{\rm KL}(P_{0},P_{a})\geq\frac{ca^{2}}{2\sigma^2}.\label{eq:lower_bound_KL}
\end{equation}
If, on the other hand, we assume that the measure $P^{\xi_1}$ has a density $p=\exp(-\phi)$ converging to zero at infinity, with $\phi$ being two times continuously differentiable and satisfying Inequality \eqref{eq:upper_bound_hessian_intro} for a constant $c_+>0$, that gives a uniform upper bound of the curvature of $\phi$ by the constant $c_+$, then it holds
\begin{equation}
{\rm KL}(P_{0},P_{a})\leq\frac{c_{+}a^{2}}{2\sigma^2}.\label{eq:upper_bound_KL}
\end{equation}
If $P^{\xi_1}$ is $c-$strongly log-concave and its potential achieves \eqref{eq:upper_bound_hessian_intro}
for a positive constant $c_+$, then Inequality \eqref{eq:upper_bound_KL} holds true. \end{lemma}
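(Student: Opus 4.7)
The plan is to reduce both bounds to estimating the integral
$\int p(y)[\phi(y-b)-\phi(y)]\,dy$, where $b = a/\sigma$ and $p = e^{-\phi}$ is the density of $\xi_1$. This follows from the change of variables $y = x/\sigma$ in the definition of the KL divergence between $P_0$ (density $\sigma^{-1}p(\cdot/\sigma)$) and its translate $P_a$. Since log-concave densities vanish at infinity, integration by parts yields
$\int p(y)\phi'(y)\,dy = -\int p'(y)\,dy = 0$, which will let me discard the linear term in any first-order expansion of $\phi(y-b) - \phi(y)$.

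For the lower bound, I would invoke the subdifferential characterization of $c$-strong convexity: at every point $y$ where $\phi$ is differentiable, which is almost every $y$ by convexity of $\phi$, one has
$\phi(y-b) \geq \phi(y) - b\phi'(y) + \tfrac{c}{2}b^2$. Integrating against $p$ kills the linear term by the preceding observation and produces $\mathrm{KL}(P_0, P_a) \geq cb^2/2 = ca^2/(2\sigma^2)$. Note that this argument does not rely on $\phi$ being $C^2$, which matches the hypothesis of the first statement.

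For the upper bound under the $C^2$ and $c_+$-curvature hypotheses, I would use Taylor's formula with integral remainder,
$\phi(y-b) - \phi(y) = -b\phi'(y) + b^2 \int_0^1(1-s)\phi''(y-sb)\,ds$. Integrating against $p$, the linear term vanishes as above, and Fubini combined with the pointwise bound $\phi'' \leq c_+$ yields $\mathrm{KL}(P_0, P_a) \leq c_+ b^2/2 = c_+ a^2/(2\sigma^2)$. The third assertion of the lemma (strong log-concavity plus \eqref{eq:upper_bound_hessian_intro} without explicit $C^2$ regularity) can be handled either by applying the dual subdifferential argument to the convex function $y \mapsto \tfrac{c_+}{2}y^2 - \phi(y)$, or by mollifying $\phi$ and passing to the limit via dominated convergence.

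The main potential obstacle lies in making the cancellation of the linear term fully rigorous at the lowest regularity for which each statement is formulated: one must verify integrability of $\phi'$ against $p$ and justify the integration by parts at the level of absolutely continuous functions. Both points follow from the fact that log-concave densities have finite exponential moments, so $|\phi'(y)|p(y)$ is integrable, and from $p(y) \to 0$ as $|y| \to \infty$, which is standard for log-concave $p$. Once these technical points are secured, the two bounds reduce to a one-line Taylor or subgradient expansion combined with Fubini.
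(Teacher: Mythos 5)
Your proposal is correct and follows essentially the same route as the paper: change variables so that $\mathrm{KL}(P_0,P_a)=\int p(y)\bigl[\phi(y-b)-\phi(y)\bigr]\,dy$ with $b=a/\sigma$, expand $\phi(y-b)-\phi(y)$ to second order, observe that the linear term $-b\int p\,\phi'= b\int p'=0$ vanishes since $p\to 0$ at infinity, and keep the curvature term. The one small improvement on your side is that you obtain the lower bound directly from the subgradient inequality at a.e.\ $y$, bypassing the mollification step the paper invokes to reduce to $\phi\in C^2$ — that is cleaner, since strong convexity gives the pointwise inequality $\phi(y-b)\ge\phi(y)-b\phi'(y)+\tfrac{c}{2}b^2$ at every point of differentiability without any smoothing. (You might also note that the paper's intermediate display carries a spurious $1/\sigma^2$ prefactor before the change of variables is made, though the final bound is of course correct; your version keeps the bookkeeping tidy.)
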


Note that Lemma \ref{lem:Bound_KL} is tight in the sense that if
$P_{0}$ is Gaussian with variance $\sigma^{2}$, then ${\rm KL}(P_{0},P_{a})=a^{2}/(2\sigma^{2})$
and we have equality in both bounds (\ref{eq:lower_bound_KL}) and (\ref{eq:upper_bound_KL}), with $c=c_{+}=1$.
\begin{proof}
Consider first the case of a $c-$strongly log-concave density $p$.By standard approximation arguments (see for instance \cite[Proposition 5.5]{SauWell14}), we can assume without loss of generality that the potential $\phi$
is two times continuously differentiable. Consequently, $c-$strong
convexity of $\phi$ is equivalent to $\phi^{\prime\prime}(x)\geq c$
for all $x\in\mathbb{R}$. Using Taylor expansion, this gives that
for any $x\in\mathbb{R}$,
\[
\phi(x-a)\geq\phi(x)-a\phi^{\prime}(x)+c\frac{a^{2}}{2}.
\]
This gives,
\begin{align*}
{\rm KL}(P_{0},P_{a}) & =\frac{1}{\sigma^2}\int_{\mathbb{R}}[\phi(x-a)-\phi(x)]\exp(-\phi(x))dx\\
 & \geq-\frac{a}{\sigma^2}\int_{\mathbb{R}}\phi^{\prime}(x)\exp(-\phi(x))dx+c_{-}\frac{a^{2}}{2\sigma^2}\int_{\mathbb{R}}\exp(-\phi(x))dx\\
 & =c_{-}\frac{a^{2}}{2\sigma^2}.
\end{align*}
Hence, (\ref{eq:lower_bound_KL}) is proved. In order to prove (\ref{eq:upper_bound_KL}), just remark that  the regularity of $\phi$ and the upper bound on its curvature yield $\sup_{x\in \mathbb{R}}\phi^{\prime\prime}(x)\leq c_{+}$. Hence, by Taylor expansion again,
\[
\phi(x-a)\leq\phi(x)-a\phi^{\prime}(x)+c_{+}\frac{a^{2}}{2}.
\]
Analogous computations now give ${\rm KL}(P_{0},P_{a})\leq c_{+}a^{2}/(2\sigma^2)$, which is \eqref{eq:upper_bound_KL}.

Finally, if $p$ is strongly log-concave, then it tends to zero at infinity. By convolution by Gaussians, we can also assume without loss of generality that $\phi$ is two times continuously differentiable (\cite[Proposition 5.5]{SauWell14}). Hence, if in addition $p$ satisfies \eqref{eq:upper_bound_hessian_intro}, then it achieves the conditions that lead to \eqref{eq:upper_bound_KL}. This concludes the proof of Lemma \ref{lem:Bound_KL}.
\end{proof}

\subsection{Some auxiliary results for the upper bounds}\label{App Auxiliary results upper bounds scenario 1}

First recall that since the vector $\xi=(\xi_1,...,\xi_d)$ is $c$-strongly log-concave - as it has independent $c$-strongly log-concave coordinates -, then it achieves the following sub-Gaussian concentration inequality for Lipschitz functions (see Proposition 2.18 in \cite{Ledoux01}): for any $L$-Lipschitz function $F:\mathbb{R}^d \rightarrow \mathbb{R}$, and any $r\geq 0$,
\begin{equation}
    \mathbb{P}(F(\xi)-\mathbb{E}[F(\xi)]\geq r)\leq \exp(-cr^2/(2L^2)).\label{ineq_Gauss_Lip}
\end{equation}
Furthermore, the celebrated Cafarelli's contraction theorem \cite{Caff00} state that the Brenier transport map pushing forward a Gaussian distribtion to a strongly log-concave measure with the same Gaussian factor is a contraction. As a result, one can derive the following Mill's type bound for the deviations the coordinate $\xi_1$ (\cite{Cordero02}, Proposition 2): for any $r>0$,
\begin{equation}
    \mathbb{P}(|\xi_1|\geq r)\leq 2(1-\Phi(\sqrt{c}r))\leq \sqrt{\frac{2}{\pi}}\frac{\exp(-cr^2/2)}{\sqrt{c}r},\label{ineq_Mills}
\end{equation}
where $\Phi$ is the standard Gaussian cumulative distribution function. Another useful fact is that, as $\xi_1$ is log-concave, it is unimodal (see \cite{SauWell14}) and as $\xi_1$ is also symmetric, the maximum of its density $p(x)=\exp(-\phi_0(x))\exp(-cx^2/2)$ is attained at its median $0$. Note that as $\phi_0$ is convex and symmetric, the maximum of the function $\exp(-\phi_0(x))$ is also attained at $0$. In addition, by a result of Bobkov and Ledoux \cite{BobLed19} (see also \cite{SauWell14} Proposition 5.2), as $0$ is the median of $\xi_1$, it holds $p(0)=\exp(-\phi_0(0))\leq 1/(\sqrt{2}\sigma_{\xi_1})\leq 1/\sqrt{2}$. Putting things together we obtain that for any $x\in \mathbb{R}$,
\begin{equation}
    p(x)\leq \frac{1}{\sqrt{2}}\exp(-cx^2/2).\label{upper_bound_p}
\end{equation}

\begin{lemma}\label{Lemme Troncation Sous Gaussienne}
For all $a\geq 0$ and all $\tau>0$, by Hoeffding inequality we have
$$
\Pb\left( \frac{1}{n}\sum_{i=1}^n\left(\sgn[a+\sigma\xi^{i}_j]- \Eb\left[\sgn[a+\sigma\xi^{i}_j] \right] \right) \geq \tau \right)
\leq \exp(- n\frac{\tau^2}{2}).
$$ 
%is a sub-Gaussian random variable with parameter at most $\frac{1}{\sqrt{n}}$.
\end{lemma}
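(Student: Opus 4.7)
The statement is a direct application of Hoeffding's inequality to bounded i.i.d.\ random variables, so the plan is essentially a one-line invocation together with a bookkeeping check on the range of the summands.

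First, I would observe that for each fixed coordinate $j$ and each $i\in\llbr 1,n\rrbr$, the random variable $U_i:=\sgn[a+\sigma\xi^i_j]$ takes values in $\{0,1\}$ by the very definition of $\sgn$ used in the paper (namely $\sgn[x]=1$ if $x\geq 0$ and $\sgn[x]=0$ if $x<0$). Since the $\xi^i_j$ are i.i.d.\ as $i$ varies, the $U_i$ are i.i.d.\ Bernoulli variables (with success parameter $\Pb(a+\sigma\xi^1_j\geq 0)$, though the exact value will not matter for the concentration argument). Consequently, the centered variables $Y_i:=U_i-\Eb[U_i]$ are i.i.d., mean-zero, and bounded in the interval $[-1,1]$, so in particular each $Y_i$ lies in an interval of length at most $2$.

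Next, I would apply the classical one-sided Hoeffding inequality to the sum $\sum_{i=1}^n Y_i$: for bounded centered independent variables with $Y_i\in[a_i,b_i]$,
\[
\Pb\!\left(\sum_{i=1}^n Y_i\geq n\tau\right)\leq \exp\!\left(-\frac{2(n\tau)^2}{\sum_{i=1}^n (b_i-a_i)^2}\right).
\]
With $b_i-a_i\leq 2$ for each $i$, the denominator is at most $4n$, producing the exponent $-n\tau^2/2$, which is precisely the stated bound. Dividing the sum by $n$ inside the probability is only a cosmetic change.

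There is no real obstacle here: the only subtlety is to be consistent about the range we assign to the centered Bernoulli variables. A tighter range $[-\Eb U_1,1-\Eb U_1]$ of length $1$ would in fact yield the sharper exponent $-2n\tau^2$, but the weaker bound stated in the lemma (treating $Y_i\in[-1,1]$) is sufficient for the uses downstream and is what I would record.
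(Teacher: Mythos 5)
Your proof is correct and follows essentially the same route as the paper, which offers no argument beyond invoking Hoeffding's inequality for bounded i.i.d.\ variables. One remark about your aside on the ``sharper'' exponent $-2n\tau^2$: it hinges on taking the paper's displayed definition $\sgn[x]\in\{0,1\}$ at face value, but the paper is internally inconsistent on this point. Elsewhere it uses $\Eb[\sgn[\sigma\xi^i_j]]=0$ and $\Eb[\sgn[a+\sigma\xi_1]]=\Pb(\xi_1\geq-a/\sigma)-\Pb(\xi_1<-a/\sigma)$, and in the privacy proof it bounds $|\sgn[x']-\sgn[x]|\leq 2$; all of these presuppose the $\pm 1$-valued sign. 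Under that (evidently intended) convention, $U_i=\sgn[a+\sigma\xi^i_j]$ lies in $\{-1,1\}$, so the centered $Y_i$ lives in an interval of length exactly $2$, and Hoeffding gives precisely $\exp(-n\tau^2/2)$ with no slack at all. So the exponent in the lemma is tight for the intended sign function, and your proposed improvement would only apply if $\sgn$ were truly $\{0,1\}$-valued. Your application of Hoeffding with range $2$ is correct in either case and reproduces the stated bound, so the proof stands; just be aware that the purported slack is an artifact of the paper's typo in the definition of $\sgn$ rather than a genuine improvement.
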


%\begin{lemma}\label{Lemme Troncation Sous Gaussienne}
%For all $a\geq 0$, $\frac{1}{n}\sum_{i=1}^n\left([a+\sigma\xi^{i}_j]_T- \Eb\left[[a+\sigma\xi^{i}_j]_T \right] \right)$ is a sub-Gaussian random variable with parameter at most $\min\left\{\frac{T}{\sqrt{n}},\frac{\sigma}{\sqrt{cn}}\right\}$.
%\end{lemma}
%\begin{proof}
%First, observe that $[a+\sigma \xi_j^{i}]_T\in[-T,T]$ almost surely.
%Then, according to Exercise 2.4 in \cite{wainwright2019highDimStat} the random variable $[a+\sigma \xi_j^{i}]_T-\Eb\left[[a+\sigma \xi_j^{i}]_T  \right]$ is sub-Gaussian with parameter at most $T$ for all $i\in\llbr 1,n\rrbr$.
%Since $(\xi_j^{i})_{i=1,\ldots,n}$ are independent, we obtain that $\frac{1}{n}\sum_{i=1}^n\left([a+\sigma\xi^{i}_j]_T- \Eb\left[[a+\sigma\xi^{i}_j]_T \right] \right)$ is sub-Gaussian with parameter at most $T/\sqrt{n}$.
%We now prove that it is also sub-Gaussian with parameter at most $\sigma/\sqrt{cn}$.
%To this aim, define $g:x\in\Rb^n\mapsto \frac{1}{n}\sum_{i=1}^n[a+\sigma x_i]_T\in\Rb$.
%It holds
%$$
%\vert g(x)-g(y)\vert \leq \frac{\sigma}{\sqrt{n}} \Vert x-y\Vert_2 \quad \forall x,y \in\Rb^n.
%$$
%Set $Y=(\xi_j^{1},\ldots,\xi_j^{n})$.
%According to the inequality (\ref{ineq_Gauss_Lip}), $g(Y)-\Eb[g(Y)]$ is sub-Gaussian with parameter at most $\sigma/\sqrt{nc}$.
%\end{proof}

We now recall Bernstein's inequality (cf. \cite{boucheron2013concentration} Corollary 2.11).
\begin{theorem}
Let $Y_1,\ldots,Y_n$ be independent real valued random variables. Assume that there exist some positive numbers $v$ and $u$ such that
\begin{equation}
\sum_{i=1}^n\mathbb{E}[Y_i^2]\leqslant v,
\end{equation}
and for all integers $m\geqslant 3$
\begin{equation}
\sum_{i=1}^n \mathbb{E}[\vert Y_i\vert^m]\leqslant \frac{m!}{2}vu^{m-2}.
\end{equation}
Let $S=\sum_{i=1}^n(Y_i-\mathbb{E}[Y_i])$, then for every positive $t$
\begin{equation}
\label{BernsteinsInequality}
\mathbb{P}\left(S\geqslant t\right)\leqslant  \exp\left(-\frac{t^2}{2(v+ut)}\right).
\end{equation}
\end{theorem}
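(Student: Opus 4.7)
The plan is to apply the classical Chernoff--Cram\'er method: control the tail $\mathbb{P}(S\geq t)$ via the exponential moment $\mathbb{E}[e^{\lambda S}]$, bound this moment generating function using the Bernstein-type moment growth hypothesis, and then optimize over $\lambda>0$. Setting $Z_i := Y_i-\mathbb{E}[Y_i]$, the first step is to apply Markov's inequality to $e^{\lambda S}$ and use independence of the $Y_i$, which for any $\lambda\in(0,1/u)$ gives
\begin{equation*}
\mathbb{P}(S\geq t) \;\leq\; e^{-\lambda t}\,\mathbb{E}[e^{\lambda S}] \;=\; e^{-\lambda t}\prod_{i=1}^n \mathbb{E}[e^{\lambda Z_i}].
\end{equation*}

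The second step is to control each factor $\mathbb{E}[e^{\lambda Z_i}]$. Since $\mathbb{E}[Z_i]=0$, Taylor expansion gives $\mathbb{E}[e^{\lambda Z_i}] = 1 + \sum_{m\geq 2}\lambda^m\mathbb{E}[Z_i^m]/m! \leq 1 + \sum_{m\geq 2}\lambda^m\mathbb{E}[|Z_i|^m]/m!$. After transferring the moment condition from $|Y_i|$ to the centered $|Z_i|$ by the $L^m$ triangle inequality (with the harmless multiplicative constants absorbed into $v$ and $u$), summation over $i$ combined with the theorem's hypothesis yields
\begin{equation*}
\sum_{i=1}^n\sum_{m\geq 2}\frac{\lambda^m\mathbb{E}[|Z_i|^m]}{m!} \;\leq\; \frac{v}{2}\sum_{m\geq 2}(u\lambda)^{m-2}\lambda^2 \;=\; \frac{v\lambda^2/2}{1-u\lambda},
\end{equation*}
the geometric series being summable precisely because $\lambda<1/u$. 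Applying $1+x\leq e^x$ factor-by-factor then gives $\prod_i \mathbb{E}[e^{\lambda Z_i}] \leq \exp\bigl((v\lambda^2/2)/(1-u\lambda)\bigr)$.

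The third step is the Cram\'er optimization. Combining the two previous bounds,
\begin{equation*}
\mathbb{P}(S\geq t) \;\leq\; \exp\!\left(-\lambda t + \frac{v\lambda^2/2}{1-u\lambda}\right) \qquad \text{for all } \lambda\in(0,1/u).
\end{equation*}
The explicit choice $\lambda^\star := t/(v+ut)$ lies in $(0,1/u)$ and gives $1-u\lambda^\star = v/(v+ut)$; substituting yields exactly the exponent $-t^2/(2(v+ut))$, which is \eqref{BernsteinsInequality}.

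The main technical obstacle is the geometric-series bookkeeping in the MGF bound: the specific normalizations $m!/2$ and $u^{m-2}$ in the moment hypothesis are calibrated precisely so that the series collapses to the clean expression $v\lambda^2/(2(1-u\lambda))$, and a careful transfer from the moments of $|Y_i|$ to those of $Z_i$ is needed to preserve the constants. Once this MGF estimate is secured, the final step is a one-line optimization, and the sub-optimal but closed-form choice $\lambda^\star=t/(v+ut)$ is what delivers the stated explicit exponent, rather than the slightly sharper but less transparent Legendre transform minimizer.
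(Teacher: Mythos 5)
The paper does not prove this statement; it is quoted verbatim as Corollary~2.11 of Boucheron--Lugosi--Massart and used as a black box, so there is no ``paper's proof'' to compare with. Your overall Chernoff--Cram\'er strategy and the final optimization $\lambda^\star = t/(v+ut)$ are exactly right, but the second step has a genuine gap.

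The hypotheses bound the moments of $|Y_i|$, not of the centered variables $Z_i = Y_i - \mathbb{E}Y_i$, and the transfer you propose is \emph{not} harmless. Minkowski/Jensen give at best $\mathbb{E}[|Z_i|^m] \leq 2^m\,\mathbb{E}[|Y_i|^m]$, and the $2^m$ factor depends on $m$: after summation it forces $v \to 4v$ and $u \to 2u$ in the moment bound, so your argument would deliver $\exp\bigl(-t^2/(2(4v+2ut))\bigr)$, strictly weaker than the claimed $\exp\bigl(-t^2/(2(v+ut))\bigr)$. Since the whole point of the specific normalization $\frac{m!}{2}vu^{m-2}$ is to reproduce these exact constants, ``absorbing'' them defeats the statement. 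There is also no way to replace $2^m$ by an $m$-independent constant in general, so this cannot be patched by tracking constants more carefully along the same route.

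The standard way to avoid centered moments entirely is to apply $\log x \leq x-1$ to the \emph{uncentered} MGF before subtracting the linear term:
\begin{equation*}
\log\mathbb{E}\bigl[e^{\lambda Z_i}\bigr]
= \log\mathbb{E}\bigl[e^{\lambda Y_i}\bigr] - \lambda\,\mathbb{E}[Y_i]
\;\leq\; \mathbb{E}\bigl[e^{\lambda Y_i}\bigr] - 1 - \lambda\,\mathbb{E}[Y_i]
= \sum_{m\geq 2}\frac{\lambda^m\,\mathbb{E}[Y_i^m]}{m!}
\;\leq\; \sum_{m\geq 2}\frac{\lambda^m\,\mathbb{E}[|Y_i|^m]}{m!}.
\end{equation*}
The linear term cancels exactly, and the hypotheses then give, after summing over $i$,
\begin{equation*}
\log\prod_{i=1}^n\mathbb{E}\bigl[e^{\lambda Z_i}\bigr]
\;\leq\; \frac{v\lambda^2}{2}\sum_{k\geq 0}(u\lambda)^k
= \frac{v\lambda^2}{2(1-u\lambda)}
\qquad (0<\lambda<1/u),
\end{equation*}
with no extraneous constants. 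From there your optimization step is correct and yields the stated bound.
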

Note that if $v\leq ut$ then \eqref{BernsteinsInequality} yields $\Pb(S\geq t)\leq \exp(-t/(4u))$.
If $ut\leq v$ then \eqref{BernsteinsInequality} yields $\Pb(S\geq t)\leq \exp(-t^2/4v)$.

We will apply this to get concentration bounds for the average of i.i.d. Laplace distributed random variables that check the assumptions of the Theorem.
\subsection{Proof of Proposition \ref{Prop Upper Bound on the risk for large a Scenario 1}}\label{App proof Upper Bound Large a scenario 1}

It holds
\begin{align*}
\vert \hat{\eta}^+-\eta\vert&=\sum_{j : \eta_j=0}\hat{\eta}^+_j+\sum_{j : \eta_j=1}(1-\hat{\eta}^+_j)\\
&=\sum_{j : \eta_j=0}I\left(\frac{1}{n}\sum_{i=1}^n \sgn[\sigma\xi^{i}_j]+\frac{2d}{n\alpha}\sum_{i=1}^nW^{i}_j\geq \tau\right)\\
&\hspace{1.5cm}+\sum_{j : \eta_j=1}I\left(\frac{1}{n}\sum_{i=1}^n \sgn[\theta_j+\sigma\xi^{i}_j] +\frac{2 d}{n\alpha}\sum_{i=1}^nW^{i}_j < \tau\right).
\end{align*}
Thus,
\begin{align*}
\Eb\left[\frac{1}{s}\vert\hat{\eta}^+-\eta\vert \right]
&=\frac{1}{s}\sum_{j : \eta_j=0}\underbrace{\Pb\left(\frac{1}{n}\sum_{i=1}^n \sgn[\sigma\xi^{i}_j]+\frac{2d}{n\alpha}\sum_{i=1}^nW^{i}_j\geq \tau\right)}_{=T_{1,j}}\\
&\hspace{1.5cm}+\frac{1}{s}\sum_{j : \eta_j=1}\underbrace{\Pb\left(\frac{1}{n}\sum_{i=1}^n \sgn[\theta_j+\sigma\xi^{i}_j] +\frac{2d}{n\alpha}\sum_{i=1}^nW^{i}_j < \tau\right)}_{=T_{2,j}}.
\end{align*}
We first study $T_{1,j}$.
It holds
$$
T_{1,j} \leq \Pb\left(\frac{1}{n}\sum_{i=1}^n \sgn[\sigma \xi^{i}_j]\geq \frac{\tau}{2} \right)+\Pb\left(\sum_{i=1}^nW^{i}_j\geq \frac{\tau n \alpha}{4d} \right).
$$
Note that $\Eb\left[\sgn[\sigma\xi_j^{i}] \right]=0$ by symmetry of $\xi_j^i$.
Using Lemma \ref{Lemme Troncation Sous Gaussienne} to bound from above the first term and Bernstein's inequality \eqref{BernsteinsInequality} with $v=2n$ and $u=1$ to bound from above the second term, we obtain if $\tau\alpha/(8d)<1$
$$
T_{1,j}\leq \exp\left(-\frac{n\tau^2}{2^3}\right)+\exp\left(-\frac{\tau^2n\alpha^2}{2^7 d^2}  \right).
$$
Since $x\mapsto \sgn[x]$ is a non-decreasing function and since $\theta_j\geq a$ for all $j$ such that $\eta_j=1$, it holds
\begin{align*}
T_{2,j}&\leq\Pb\left(\frac{1}{n}\sum_{i=1}^n \sgn[a+\sigma\xi^{i}_j] +\frac{2d}{n\alpha}\sum_{i=1}^nW^{i}_j < \tau\right)\\
&=\Pb\left(\frac{1}{n}\sum_{i=1}^n\left( \sgn[a+\sigma\xi^{i}_j]- \Eb\left[ \sgn[a+\sigma\xi^{i}_j] \right] \right)+\Eb\left[\sgn[a+\sigma\xi^{1}_j] \right]+\frac{2d}{n\alpha}\sum_{i=1}^nW^{i}_j < \tau\right)\\
&=\Pb\left(-\frac{1}{n}\sum_{i=1}^n\left(\sgn[a+\sigma\xi^{i}_j]- \Eb\left[\sgn[a+\sigma\xi^{i}_j] \right] \right)-\frac{2d}{n\alpha}\sum_{i=1}^nW^{i}_j > \Eb\left[\sgn[a+\sigma\xi^{1}_j]\right]-\tau \right).
\end{align*}
%Now, if $\xi\sim\Nc(0,1)$ then
%\begin{align*}
%\Eb\left[[a+\sigma \xi]_T\right]&=T\Pb\left( \xi>\frac{T-a}{\sigma}\right)-T\Pb\left( \xi<\frac{-T-a}{\sigma}\right)+\Eb\left[(a+\sigma \xi)I(\vert a+\sigma \xi\vert \leq T) \right]\\
%&=T\Pb\left( \xi>\frac{T-a}{\sigma}\right)-T\Pb\left( \xi<\frac{-T-a}{\sigma}\right)+\int_{\frac{-T-a}{\sigma}}^{\frac{T-a}{\sigma}}(a+\sigma x)\frac{1}{\sqrt{2\pi}}e^{-x^2/2}\dd x\\
%&\geq T\Pb\left( \xi>\frac{T-a}{\sigma}\right)-T\Pb\left( \xi<\frac{-T-a}{\sigma}\right)+a\Pb\left(\frac{-T-a}{\sigma}\leq \xi\leq\frac{T-a}{\sigma} \right) \\
%&\hspace{1cm}+\sigma\cdot\left( \frac{-T-a}{\sigma}\right) \cdot\Pb\left(\frac{-T-a}{\sigma}\leq \xi\leq\frac{T-a}{\sigma} \right)\\
%&=T\left[\Pb\left( \xi>\frac{T-a}{\sigma}\right)-\Pb\left( \xi\leq\frac{T-a}{\sigma}\right)\right]=T\left[1-2\Phi\left(\frac{T-a}{\sigma}\right)  \right],
%\end{align*}
As $\xi_1$ is symmetric and absolutely continuous, we have %$[u]_T \geq T \cdot I(u > T) -T\cdot I (u \leq T)$ for all real number $u$ to get
\begin{align*}
\Eb\left[\sgn[a+\sigma \xi_1]\right]
&= \Pb \left( \xi_1 \geq -\frac{a}{\sigma}\right)-\Pb \left( \xi_1 < -\frac{a}{\sigma}\right)\\
&= 1-2 \Pb \left( \xi_1 > \frac{a}{\sigma}\right).
\end{align*}
Using (\ref{ineq_Mills}) we further get 
\begin{align*}
\Eb\left[\sgn[a+\sigma \xi_1]\right]
& \geq 2 \Phi \left( \sqrt{c}\frac{a}{\sigma}\right)-1
 \geq 2 \Phi(2 \sqrt{c})-1=:C_1,
\end{align*}
for $a/\sigma \geq 2$, with $\Phi$ the c.d.f. of the standard Gaussian distribution.

Thus, if $a\geq 2\sigma $, it holds
\begin{align*}
T_{2,j}&\leq \Pb\left(-\frac{1}{n}\sum_{i=1}^n\left(\sgn[a+\sigma\xi^{i}_j]- \Eb\left[\sgn[a+\sigma\xi^{i}_j] \right] \right)-\frac{2d}{n\alpha}\sum_{i=1}^nW^{i}_j > C_1-\tau\right)\\
&\leq \Pb\left(-\frac{1}{n}\sum_{i=1}^n\left(\sgn[a+\sigma\xi^{i}_j]- \Eb\left[\sgn[a+\sigma\xi^{i}_j] \right] \right) > \frac{C_1-\tau}{2}\right)\\
&\hspace{5cm} + \Pb\left(\sum_{i=1}^n\left(-W^{i}_j \right)> \frac{n\alpha(C_1 -\tau)}{4d}\right).
\end{align*}
We can now bound from above the first term using lemma \ref{Lemme Troncation Sous Gaussienne} and the second term using Bernstein's inequality.
This gives, if $C_1\geq \tau$ and $\alpha(C_1-\tau)/(8d)\leq 1$
$$
T_{2,j}\leq \exp\left(-\frac{n(C_1-\tau)^2}{2^3 }\right)+\exp\left(-\frac{(C_1-\tau)^2n\alpha^2}{2^7 d^2}  \right).
$$
This ends the proof of \eqref{Eq Upper bound large a scenario 1 theta+}.
We now prove \eqref{Eq Upper bound large a scenario 1 theta}.
If $\theta\in\Theta_d(s,a)$, we use the estimator $\hat{\eta}$ instead of $\hat{\eta}^+$ and it holds
\begin{align*}
\Eb\left[\frac{1}{s}\vert\hat{\eta}-\eta\vert \right]
&=\frac{1}{s}\sum_{j : \eta_j=0}\underbrace{\Pb\left(\left\vert\frac{1}{n}\sum_{i=1}^n \sgn[\sigma\xi^{i}_j]
+\frac{2d}{n\alpha}\sum_{i=1}^nW^{i}_j\right\vert\geq \tau\right)}_{=\tilde{T}_{1,j}}\\
&\hspace{1.5cm}+\frac{1}{s}\sum_{j : \eta_j=1}\underbrace{\Pb\left(\left\vert\frac{1}{n}\sum_{i=1}^n \sgn[\theta_j+\sigma\xi^{i}_j] +\frac{2d}{n\alpha}\sum_{i=1}^nW^{i}_j\right\vert < \tau\right)}_{=\tilde{T}_{2,j}}.
\end{align*}
We first study $\tilde{T}_{1,j}$.
It holds
\begin{align*}
 \tilde{T}_{1,j}&\leq \Pb\left( \left\vert\frac{1}{n}\sum_{i=1}^n \sgn[\sigma\xi^{i}_j] \right\vert+\frac{2d}{n\alpha}\left\vert\sum_{i=1}^nW^{i}_j\right\vert\geq \tau \right)\\
 &\leq \Pb\left(\left\vert\frac{1}{n}\sum_{i=1}^n \sgn[\sigma \xi^{i}_j] \right\vert\geq \frac{\tau}{2} \right)+\Pb\left(\left\vert\sum_{i=1}^nW^{i}_j\right\vert\geq \frac{\tau n \alpha}{4d} \right)\\
 &\leq 2\left[\exp\left(-\frac{n\tau^2}{2^3 }\right)+\exp\left(-\frac{\tau^2n\alpha^2}{2^7 d^2}  \right)\right],
\end{align*}
if $\tau\alpha/(8d)<1$, where we have used the two-sided versions of the concentration inequalities we used to prove \eqref{Eq Upper bound large a scenario 1 theta+}.
We now study $\tilde{T}_{2,j}$.
For all $j$ such that $\eta_j=1$, it holds
\begin{align*}
\tilde{T}_{2,j}&=\Pb\left(\left\vert\frac{1}{n}\sum_{i=1}^n\left( \sgn[\theta_j+\sigma\xi^{i}_j]- \Eb\left[\sgn[\theta_j+\sigma\xi^{i}_j] \right] \right)+\Eb\left[\sgn[\theta_j+\sigma\xi^{1}_j] \right]+\frac{2d}{n\alpha}\sum_{i=1}^nW^{i}_j\right\vert < \tau\right)\\
&\leq \Pb\left(\left\vert \Eb\left[\sgn[\theta_j+\sigma\xi^{1}_j] \right] \right\vert -\left\vert\frac{1}{n}\sum_{i=1}^n\left(\sgn[\theta_j+\sigma\xi^{i}_j]- \Eb\left[\sgn[\theta_j+\sigma\xi^{i}_j] \right] \right)+\frac{2d}{n\alpha}\sum_{i=1}^nW^{i}_j\right\vert < \tau\right)\\
&= \Pb\left(\left\vert\frac{1}{n}\sum_{i=1}^n\left( \sgn[\theta_j+\sigma\xi^{i}_j]- \Eb\left[\sgn[\theta_j+\sigma\xi^{i}_j] \right] \right)+\frac{2d}{n\alpha}\sum_{i=1}^nW^{i}_j\right\vert > \left\vert \Eb\left[\sgn[\theta_j+\sigma\xi^{1}_j] \right] \right\vert-\tau \right)
\end{align*}
Now, observe that
$$
\left\vert \Eb\left[ \sgn[\theta_j+\sigma\xi^{1}_j] \right] \right\vert 
\geq  \Eb\left[ \sgn[\theta_j+\sigma\xi^{1}_j] \right] 
\geq  \Eb\left[ \sgn[a+\sigma\xi^{1}_j] \right],
$$
if $\theta_j\geq a$ since $x\mapsto \sgn[x]$ is non-decreasing, and if $\theta_j\leq -a$ we have
\begin{align*}
&\left\vert \Eb\left[ \sgn[\theta_j+\sigma\xi^{1}_j] \right] \right\vert
\geq  -\Eb\left[\sgn[\theta_j+\sigma\xi^{1}_j] \right]
\geq  -\Eb\left[\sgn[-a+\sigma\xi^{1}_j] \right]  \\
& = -\Eb\left[\sgn[-a -\sigma\xi^{1}_j] \right]
=\Eb\left[ \sgn[a+\sigma\xi^{1}_j] \right],
\end{align*}
where we have used that $x\mapsto \sgn[x]$ is a non-decreasing and odd function and that $-\xi^{1}_j$ and $\xi^{1}_j$ have the same distribution.
Moreover, we have seen in the proof of \eqref{Eq Upper bound large a scenario 1 theta+} that it holds
$$
\Eb\left[ \sgn[a+\sigma \xi] \right]\geq 2\Phi\left(2 \sqrt{c}\right)-1=: C_1 ,
$$
where $\Phi$ denotes the cumulative distribution function of the Gaussian distribution.
Thus, if $a\geq 2\sigma $, it holds $\Eb\left[\sgn[a+\sigma \xi^{1}_j]\right]\geq C_1$ for all $j$ such that $\eta_j=1$, and
\begin{align*}
\tilde{T}_{2,j}&\leq \Pb\left(\left\vert\frac{1}{n}\sum_{i=1}^n\left( \sgn[\theta_j+\sigma\xi^{i}_j]
- \Eb\left[ \sgn[\theta_j+\sigma\xi^{i}_j] \right] \right)+\frac{2d}{n\alpha}\sum_{i=1}^nW^{i}_j\right\vert > C_1-\tau \right)\\
&\leq \Pb\left(\left\vert\frac{1}{n}\sum_{i=1}^n\left( \sgn[\theta_j+\sigma\xi^{i}_j] - \Eb\left[\sgn[\theta_j+\sigma\xi^{i}_j] \right] \right)\right\vert+\left\vert\frac{2d}{n\alpha}\sum_{i=1}^nW^{i}_j\right\vert > C_1-\tau \right)\\
&\leq \Pb\left(\left\vert\frac{1}{n}\sum_{i=1}^n\left(\sgn[\theta_j+\sigma\xi^{i}_j]- \Eb\left[\sgn[\theta_j+\sigma\xi^{i}_j] \right] \right)\right\vert > \frac{C_1-\tau}{2}\right)\\
&\hspace{5cm} + \Pb\left(\left\vert\sum_{i=1}^n W^{i}_j\right\vert> \frac{n\alpha(C_1-\tau)}{4d}\right).
\end{align*}
Using the two-sided version of the concentration inequalities that we used to bound $T_{2,j}$ in the proof of \eqref{Eq Upper bound large a scenario 1 theta+}, we obtain if $C_1 > \tau$ and $\alpha(C_1 -\tau)/(8d)\leq 1$
$$
\tilde{T}_{2,j}\leq 2\left[ \exp\left(-\frac{n(C_1-\tau)^2}{2^3} \right)+\exp\left(-\frac{(C_1-\tau)^2n\alpha^2}{2^7 d^2}  \right) \right].
$$
This ends the proof of \eqref{Eq Upper bound large a scenario 1 theta}.

\subsection{Proof of Corollary \ref{Corollary Sufficient Conditions for AFR large a scenario 1}}\label{App Corollary Upper Bound Large a scenario 1}

Let us prove \eqref{Eq AFR large a scenario 1}.
Note that if the assumptions of Corollary \ref{Corollary Sufficient Conditions for AFR large a scenario 1} are satisfied, and if $\tau=C_1/2$ then the assumptions of Proposition \ref{Prop Upper Bound on the risk for large a Scenario 1} are also satisfied and for all $a\geq 2\sigma$ we have
\begin{align*}
\sup_{\theta\in\Theta}\Eb\left[\frac{1}{s}\vert\hat{\eta}^+-\eta\vert \right]&\leq 2\cdot\frac{d}{s}\left[\exp\left(-\frac{C_1^2n}{2^5}\right)+\exp\left(-\frac{C_1^2n\alpha^2}{2^9 d^2}  \right)\right]\\
&=2\left\{\exp\left(\log\left(\frac{d}{s} \right)-\frac{C_1^2n}{2^5}\right)+\exp\left(\log\left(\frac{d}{s} \right)-\frac{C_1^2n\alpha^2}{2^9 d^2}  \right)\right\}\\
&=2\exp\left(-\frac{n\alpha^2}{d^2}\left[\frac{C_1^2d^2}{2^5\alpha^2}-\frac{\log(d/s)}{n\alpha^2/d^2}\right]\right)\\
&+2\exp\left(-\frac{n\alpha^2}{d^2}\left[\frac{C_1^2}{2^9}-\frac{\log(d/s)}{n\alpha^2/d^2}\right]\right).
\end{align*}
The two terms appearing in the last inequality both tend to $0$ as $d\rightarrow +\infty$ under the assumptions of Corollary \ref{Corollary Sufficient Conditions for AFR large a scenario 1}, which gives \eqref{Eq AFR large a scenario 1}.
The proof of \eqref{Eq ER large a scenario 1} is similar.

\subsection{Proof of Proposition \ref{Prop Upper Bound for small a scenario 1}}\label{App Proof Upper Bound Small a scenario 1}

The beginning of the proof is similar to the proof of Proposition \ref{Prop Upper Bound on the risk for large a Scenario 1}.
It holds
\begin{align*}
\Eb\left[\frac{1}{s}\vert\hat{\eta}^+-\eta\vert \right]&=\frac{1}{s}\sum_{j : \eta_j=0}\underbrace{\Pb\left(\frac{1}{n}\sum_{i=1}^n \sgn[\sigma\xi^{i}_j] +\frac{2d}{n\alpha}\sum_{i=1}^nW^{i}_j\geq \tau\right)}_{=T_{1,j}}\\
&\hspace{1.5cm}+\frac{1}{s}\sum_{j : \eta_j=1}\underbrace{\Pb\left(\frac{1}{n}\sum_{i=1}^n \sgn[\theta_j+\sigma\xi^{i}_j] +\frac{2d}{n\alpha}\sum_{i=1}^nW^{i}_j < \tau\right)}_{=T_{2,j}},
\end{align*}
and we have
$$
T_{1,j}\leq \exp\left(-\frac{n\tau^2}{2^3} \right)+\exp\left(-\frac{\tau^2n\alpha^2}{2^7 d^2}  \right)
$$
if $\tau\alpha/(8d)<1$, and
$$
T_{2,j}\leq\Pb\left(-\frac{1}{n}\sum_{i=1}^n\left(\sgn[a+\sigma\xi^{i}_j]- \Eb\left[\sgn[a+\sigma\xi^{i}_j] \right] \right)-\frac{2d}{n\alpha}\sum_{i=1}^nW^{i}_j > \Eb\left[\sgn[a+\sigma\xi^{1}_j]\right]-\tau \right).
$$
Now, we bound from below $\Eb\left[\sgn[a+\sigma \xi]\right]$ in a different way than in the proof of Proposition \ref{Prop Upper Bound on the risk for large a Scenario 1} by the tighter bound:
\begin{align*}
\Eb\left[ \sgn[a+\sigma \xi_1]\right]&= 2 \Pb(0<\xi_1 \leq \frac a\sigma) \geq 2 \frac a\sigma p(\frac a \sigma)\geq 2 \frac a\sigma p(2),
\end{align*}
for $a/\sigma <2$, as the pdf $p$ of $\xi_1$ is $c-$strongly log-concave and symmetric and thus uni-modal at 0 and decreasing on $(0,\infty)$.

Thus, %if $T\geq a+\sigma\sqrt{\frac 2c \log\left( \max\left\{ e, \frac{4\sigma}{ca\sqrt{2}} \right\} \right)}$, then $\Eb\left[[a+\sigma \xi]_T\right]\geq a/2$, and
\begin{align*}
T_{2,j}&\leq \Pb\left(-\frac{1}{n}\sum_{i=1}^n\left(\sgn[a+\sigma\xi^{i}_j]- \Eb\left[\sgn[a+\sigma\xi^{i}_j] \right] \right)-\frac{2d}{n\alpha}\sum_{i=1}^nW^{i}_j > 2\frac{a}{\sigma}p(2)-\tau\right)\\
&\leq \Pb\left(-\frac{1}{n}\sum_{i=1}^n\left(\sgn[a+\sigma\xi^{i}_j]- \Eb\left[\sgn[a+\sigma\xi^{i}_j] \right] \right) > \frac a \sigma p(2)-\frac \tau 2\right)\\
&\hspace{5cm} + \Pb\left(\sum_{i=1}^n\left(-W^{i}_j \right)> \frac{n\alpha(a/\sigma p(2)-\tau/2)}{2d}\right).
\end{align*}
We can now bound from above the first term using lemma \ref{Lemme Troncation Sous Gaussienne} and the second term using Bernstein's inequality.
This gives, if $\tau < 2 a/\sigma p(2) $ and $\alpha(a/\sigma p(2)-\tau/2)/(4d)\leq 1$
$$
T_{2,j}\leq \exp\left(-\frac{n(a/\sigma p(2)-\tau/2)^2}{2^3}\right)
+\exp\left(-\frac{(a/\sigma p(2)-\tau/2)^2n\alpha^2}{2^5 d^2}  \right).
$$

\subsection{Proof of Corollary \ref{Corollary Sufficient Conditions for AFR small a}}\label{App Corollary upper bound small a scenario 1}

Let prove \eqref{Eq AFR small a scenario 1}.
The chosen value of $\tau = a/\sigma \cdot p(2)$  satisfies the assumptions of Proposition \ref{Prop Upper Bound for small a scenario 1} for $d$ large enough and yield
\begin{align*}
\sup_{\theta\in\Theta_d^+(s,a)}\Eb\left[\frac{1}{s}\vert\hat{\eta}^+-\eta\vert \right]
&\leq \frac{d}{s}\left[\exp\left(-\frac{na^2}{2^3 \sigma^2} p^2(2)\right)+\exp\left(-\frac{n\alpha^2 a^2}{2^{7} \sigma^2 d^2} p^2(2) \right)\right]\\
&=\exp\left(\log\left(\frac{d}{s} \right)-\frac{na^2}{2^3\sigma^2} p^2(2)\right)+\exp\left(\log\left(\frac{d}{s} \right)-\frac{n\alpha^2 a^2}{2^{7}\sigma^2 d^2} p^2(2) \right)\\
&\leq 2\exp\left(\log\left(\frac{d}{s} \right)-\frac{n\alpha^2 a^2}{2^{7}  \sigma^2 d^2}  \right).
\end{align*}
Conclude using that $a \gg \sigma d/\sqrt{n \alpha^2} \sqrt{\log(d/s)}$. 
The proof of \eqref{Eq ER small a scenario 1} is similar.

\section{Proofs of Section \ref{Section Scenario 2}}

\subsection{Proof of Proposition \ref{Prop alpha DP scenario 2}}\label{App Proof alpha DP scenario 2}

Note that it is sufficient to prove that $\tilde{Z}^{i}$ is an $\alpha$-LDP view of $X^{i}$.
Indeed, if $\tilde{Z}^{i}$ is an $\alpha$-LDP view of $X^{i}$ then it holds for all $z\in \Zc$ and  $x,x'\in\Rb^d$ (we omit the superscript $i$)
\begin{align*}
\frac{\Pb\left( Z=z\mid X=x\right)}{\Pb\left( Z=z\mid X=x'\right)}&=\frac{\sum_{\tilde{z}\in\{-B,B\}^d}\Pb\left( Z=z\mid \tilde{Z}=\tilde{z}, X=x \right)\Pb\left(\tilde{Z}=\tilde{z} \mid X=x  \right)}{\sum_{\tilde{z}\in\{-B,B\}^d}\Pb\left( Z=z\mid \tilde{Z}=\tilde{z}, X=x' \right)\Pb\left(\tilde{Z}=\tilde{z} \mid X=x ' \right)}\\
&=\frac{\sum_{\tilde{z}\in\{-B,B\}^d}\Pb\left( Z=z\mid \tilde{Z}=\tilde{z} \right)\Pb\left(\tilde{Z}=\tilde{z} \mid X=x  \right)}{\sum_{\tilde{z}\in\{-B,B\}^d}\Pb\left( Z=z\mid \tilde{Z}=\tilde{z} \right)\Pb\left(\tilde{Z}=\tilde{z} \mid X=x ' \right)}\\
&\leq e^\alpha,
\end{align*}
where we have used that $Z$ is independent from $X$ conditionally to $\tilde{Z}$ and the fact that $\Pb\left(\tilde{Z}=\tilde{z} \mid X=x  \right)\leq e^\alpha \Pb\left(\tilde{Z}=\tilde{z} \mid X=x ' \right)$ for all $\tilde{z}\in\{-B,B\}^d$ if $\tilde{Z}$ is an $\alpha$-LDP view of $X$.
So, let's prove that $\tilde{Z}^{i}$ is an $\alpha$-LDP view of $X^{i}$.
In what follows, we omit once again the superscript $i$.
We have to prove that for all $\tilde{z}\in\{-B,B\}^d$ and all $x,x'\in\Rb^d$ it holds
$$
\frac{\Pb\left(\tilde{Z}=\tilde{z} \mid X=x \right)}{\Pb\left(\tilde{Z}=\tilde{z} \mid X=x' \right)}\leq e^\alpha.
$$
Let $\tilde{z}\in\{-B,B\}^d$ and  $x\in\Rb^d$.
It holds
\begin{align*}
\Pb\left(\tilde{Z}=\tilde{z} \mid X=x \right)&=\sum_{\tilde{x}\in\{-1,1\}^d}\Pb\left(\tilde{Z}=\tilde{z} \mid X=x, \tilde{X}=\tilde{x} \right)\cdot \Pb\left( \tilde{X}=\tilde{x}\mid X=x \right)\\
&=\sum_{\tilde{x}\in\{-1,1\}^d}\Pb\left(\tilde{Z}=\tilde{z} \mid \tilde{X}=\tilde{x} \right)\cdot \Pb\left( \tilde{X}=\tilde{x}\mid X=x \right),\\
\end{align*}
and since $Y$ and $\tilde{X}$ are independent we have
\begin{align*}
\Pb\left(\tilde{Z}=\tilde{z} \mid \tilde{X}=\tilde{x} \right)&=\Pb\left(\tilde{Z}=\tilde{z} \mid \tilde{X}=\tilde{x},Y=1 \right)\cdot \Pb\left(Y=1 \right)\\
&\hspace{1cm}+\Pb\left(\tilde{Z}=\tilde{z} \mid \tilde{X}=\tilde{x}, Y=0 \right)\cdot \Pb\left(Y=0 \right)\\
&=\pi_\alpha\Pb\left(\tilde{Z}=\tilde{z} \mid \tilde{X}=\tilde{x},Y=1 \right)+(1-\pi_\alpha)\Pb\left(\tilde{Z}=\tilde{z} \mid \tilde{X}=\tilde{x}, Y=0 \right).
\end{align*}
Moreover, since for $\tilde{x}\in\{-1,1\}^d$
\begin{multline*}
\Card\left(\left\{ \tilde{z}\in\left\{-B,B\right\}^d \, \mid \, \langle \tilde{z}, \tilde{x} \rangle> 0 \text{ or } (\langle \tilde{z}, \tilde{x}\rangle=0 \text{ and }\tilde{z}_1=B\tilde{x}_1)\right\} \right) \\
=  \Card\left(\left\{ \tilde{z}\in\left\{-B,B\right\}^d \, \mid \, \langle \tilde{z}, \tilde{x} \rangle< 0 \text{ or } (\langle \tilde{z}, \tilde{x}\rangle=0 \text{ and }\tilde{z}_1=-B\tilde{x}_1)\right\} \right)  = 2^{d-1},
\end{multline*}
it holds
$$
\Pb\left(\tilde{Z}=\tilde{z} \mid \tilde{X}=\tilde{x},Y=1 \right)=
\left\lbrace \begin{array}{ll}
0 &\text{ if } \langle \tilde{z}, \tilde{x} \rangle<0 \text{ or } \left( \langle \tilde{z}, \tilde{x} \rangle=0 \text{ and }\tilde{z}_1=-B\tilde{x}_1\right)\\
\frac{1}{2^{d-1}} & \text{ if }\langle \tilde{z}, \tilde{x}\rangle>0 \text{ or } \left(\langle \tilde{z}, \tilde{x} \rangle=0 \text{ and }\tilde{z}_1=B\tilde{x}_1 \right),
\end{array}\right.
$$
and
$$
\Pb\left(\tilde{Z}=\tilde{z} \mid \tilde{X}=\tilde{x},Y=0 \right)=
\left\lbrace \begin{array}{ll}
\frac{1}{2^{d-1}} &\text{ if } \langle \tilde{z}, \tilde{x} \rangle<0 \text{ or } \left( \langle \tilde{z}, \tilde{x} \rangle=0 \text{ and }\tilde{z}_1=-B\tilde{x}_1\right)\\
0  & \text{ if } \langle \tilde{z}, \tilde{x}\rangle>0 \text{ or } \left(\langle \tilde{z}, \tilde{x} \rangle=0 \text{ and }\tilde{z}_1=B\tilde{x}_1 \right) .
\end{array}\right.
$$
We thus have
$$
\Pb\left(\tilde{Z}=\tilde{z} \mid \tilde{X}=\tilde{x} \right)=
\left\lbrace \begin{array}{ll}
\frac{1-\pi_\alpha}{2^{d-1}} &\text{ if } \langle \tilde{z}, \tilde{x} \rangle <0 \text{ or } \left( \langle \tilde{z}, \tilde{x} \rangle=0 \text{ and }\tilde{z}_1=-B\tilde{x}_1\right)\\
\frac{\pi_\alpha}{2^{d-1}}  & \text{ if } \langle \tilde{z}, \tilde{x} \rangle >0 \text{ or } \left(\langle \tilde{z}, \tilde{x} \rangle=0 \text{ and }\tilde{z}_1=B\tilde{x}_1 \right),
\end{array}\right.
$$
and, if we set
$$
A_{\tilde{z}}=\left\{\tilde{x}\in\{-1,1\}^d  : \langle \tilde{z}, \tilde{x} \rangle >0 \text{ or } \left(\langle \tilde{z}, \tilde{x} \rangle=0 \text{ and }\tilde{z}_1=B\tilde{x}_1 \right) \right\}
$$
and
$$
C_{\tilde{z}}=\left\{\tilde{x}\in\{-1,1\}^d  : \langle \tilde{z}, \tilde{x} \rangle <0 \text{ or } \left(\langle \tilde{z}, \tilde{x} \rangle=0 \text{ and }\tilde{z}_1=-B\tilde{x}_1 \right) \right\},
$$
 we obtain
$$
\Pb\left(\tilde{Z}=\tilde{z} \mid X=x \right)=\frac{\pi_\alpha}{2^{d-1}}\sum_{\tilde{x}\in A_{\tilde{z}}}\Pb\left( \tilde{X}=\tilde{x}\mid X=x \right)+\frac{1-\pi_\alpha}{2^{d-1}}\sum_{\tilde{x}\in C_{\tilde{z}}}\Pb\left( \tilde{X}=\tilde{x}\mid X=x \right).
$$
Consequently, it holds for all $\tilde{z}\in\{-B,B\}^d$ and all $x\in\Rb^d$,
$$
\frac{\min\{\pi_\alpha, 1-\pi_\alpha\}}{2^{d-1}}\leq \Pb\left(\tilde{Z}=\tilde{z} \mid X=x \right)\leq \frac{\max\{\pi_\alpha, 1-\pi_\alpha\}}{2^{d-1}},
$$
where we have used that $A_{\tilde{z}}\sqcup C_{\tilde{z}}=\{-1,1\}^d$ and $\sum_{\tilde{x}\in \{-1,1\}^d}\Pb\left( \tilde{X}=\tilde{x}\mid X=x \right)=1$.
We finally obtain for all $\tilde{z}\in\{-B,B\}^d$ and all $x,x'\in\Rb^d$,
$$
\frac{\Pb\left(\tilde{Z}=\tilde{z} \mid X=x \right)}{\Pb\left(\tilde{Z}=\tilde{z} \mid X=x' \right)}\leq \frac{\max\{\pi_\alpha, 1-\pi_\alpha\}}{\min\{\pi_\alpha, 1-\pi_\alpha\}}=\frac{\pi_\alpha}{1-\pi_\alpha}=e^\alpha.
$$

\subsection{Proof of Proposition \ref{Prop Expectation of the private data scenario 2}}\label{App Proof expectation of the private data scenario 2}

Let $x\in\Rb^d$.
We first compute $\Eb\left[\tilde{Z}\mid X=x  \right]$.
It holds
\begin{align*}
\Eb\left[\tilde{Z} \mid X=x \right]&=\sum_{\tilde{x}\in\{-1,1\}^d}\Pb\left( \tilde{X}=\tilde{x}\mid X=x \right)\cdot \Eb\left[\tilde{Z} \mid X=x, \tilde{X}=\tilde{x} \right] \\
&=\sum_{\tilde{x}\in\{-1,1\}^d}\Pb\left( \tilde{X}=\tilde{x}\mid X=x \right)\cdot \Eb\left[\tilde{Z}\mid \tilde{X}=\tilde{x} \right],
\end{align*}
and since $Y$ and $\tilde{X}$ are independent we have
\begin{align*}
\Eb\left[\tilde{Z} \mid \tilde{X}=\tilde{x} \right]&=\Pb\left(Y=1 \right)\cdot\Eb\left[\tilde{Z} \mid \tilde{X}=\tilde{x},Y=1 \right]+\Pb\left(Y=0 \right)\cdot \Eb\left[\tilde{Z} \mid \tilde{X}=\tilde{x}, Y=0 \right] \\
&=\pi_\alpha\Eb\left[\tilde{Z} \mid \tilde{X}=\tilde{x},Y=1 \right]+(1-\pi_\alpha)\Eb\left[\tilde{Z}=z \mid \tilde{X}=\tilde{x}, Y=0 \right].
\end{align*}
Define
\begin{align*}
&A_{\tilde{x}}:=\left\{\tilde{z}\in\left\{-B,B\right\}^d \, \mid \, \langle \tilde{z}, \tilde{x} \rangle >0 \text{ or } \left(\langle \tilde{z}, \tilde{x} \rangle=0 \text{ and }\tilde{z}_1=B\tilde{x}_1 \right)  \right\},\\
&C_{\tilde{x}}:=\left\{\tilde{z}\in\left\{-B,B\right\}^d \, \mid \, \langle \tilde{z}, \tilde{x} \rangle <0 \text{ or } \left(\langle \tilde{z}, \tilde{x} \rangle=0 \text{ and }\tilde{z}_1=-B\tilde{x}_1 \right)   \right\}.
\end{align*}
Conditionnally on $\left\{ \tilde{X}=\tilde{x},Y=1  \right\}$, it holds $Z\sim \Uc\left(A_{\tilde{x}} \right)$.
Thus,
\begin{align*}
\Eb\left[\tilde{Z} \mid \tilde{X}=\tilde{x},Y=1 \right]=\sum_{\tilde{z}\in A_{\tilde{x}}}\Pb\left( \tilde{Z}=\tilde{z} \mid \tilde{X}=\tilde{x}, Y=1 \right)\tilde{z}=\frac{1}{\Card(A_{\tilde{x}})}\sum_{\tilde{z}\in A_{\tilde{x}}}\tilde{z}.
\end{align*}
Similarly,
\begin{align*}
\Eb\left[\tilde{Z} \mid \tilde{X}=\tilde{x},Y=0 \right]&=\frac{1}{\Card(C_{\tilde{x}})}\sum_{\tilde{z}\in C_{\tilde{x}}}\tilde{z}=\frac{1}{\Card(C_{\tilde{x}})}\sum_{\tilde{z}\in A_{\tilde{x}}}(-\tilde{z})\\
&=-\Eb\left[\tilde{Z} \mid \tilde{X}=\tilde{x},Y=1 \right],
\end{align*}
where we have used $\Card(C_{\tilde{x}})=\Card(A_{\tilde{x}})$.
We thus obtain
$$
\Eb\left[\tilde{Z} \mid \tilde{X}=\tilde{x} \right]=\frac{2\pi_\alpha-1}{\Card(A_{\tilde{x}})}\sum_{\tilde{z}\in A_{\tilde{x}}}\tilde{z},
$$
and, using that $\Card(A_{\tilde{x}})=2^{d-1}$ for all $\tilde{x}\in\{-1,1\}^d$ we obtain
$$
\Eb\left[\tilde{Z} \mid X=x \right]=\frac{2\pi_\alpha-1}{2^{d-1}}\sum_{\tilde{x}\in\{-1,1\}^d}\left[\Pb\left( \tilde{X}=\tilde{x}\mid X=x \right)\cdot \sum_{\tilde{z}\in A_{\tilde{x}}}\tilde{z}\right].
$$
We now compute $\sum_{\tilde{z}\in A_{\tilde{x}}}\tilde{z}$ for all $\tilde{x}\in \{-1,1\}^d$.
Note that for $\tilde{z}\in\{-B,B\}^d$ and $\tilde{x}\in \{-1,1\}^d$, $\langle \tilde{z},\tilde{x}\rangle$ is a sum of $d$ terms, each equal to $-B$ or $B$.
If $a$ denotes the number of elements of this sum equal to $B$ and $b$ denotes the number of elements of this sum equal to $-B$, then it holds $a+b=d$ and $\langle \tilde{z},\tilde{x}\rangle=aB-bB=B(d-2b)$.
Thus we can only have $\langle \tilde{z},\tilde{x}\rangle=kB$, with $k\in \llbr -d,d\rrbr$ and $\vert k\vert$ has the same parity as $d$.
We thus have
\begin{equation}\label{Proof Expectation of the private data scenario 2, sum z when d odd}
\sum_{\tilde{z}\in A_{\tilde{x}}}\tilde{z}=\sum_{p=0}^{(d-1)/2}\sum_{\{\tilde{z}\in\{-B,B\}^d : \langle \tilde{z},\tilde{x}\rangle =(2p+1)B\}}z,
\end{equation}
if $d$ is odd, and
\begin{equation}\label{Proof Expectation of the private data scenario 2, sum z when d even}
\sum_{\tilde{z}\in A_{\tilde{x}}}\tilde{z}=\sum_{p=1}^{d/2}\displaystyle\sum_{\substack{ \tilde{z}\in\{-B,B\}^d: \\ \langle \tilde{z},\tilde{x}\rangle =2p\cdot B}}\tilde{z}+ \sum_{\substack{ \tilde{z}\in\{-B,B\}^d: \\ \langle \tilde{z},\tilde{x}\rangle=0 \\ \tilde{z}_1=B\tilde{x}_1}}\tilde{z},
\end{equation}
if $d$ is even.
Now, observe that for all $\tilde{x}\in\{-1,1\}^d$, for all $j\in\llbr 1,d\rrbr$ and for all $k\in\{0,\ldots,d\}$ with the same parity as $d$, it holds
\begin{equation}\label{Proof Expectation of the private data scenario 2, Intermediary result}
\sum_{\substack{ \tilde{z}\in\{-B,B\}^d: \\ \langle \tilde{z},\tilde{x}\rangle =kB}}\tilde{z}_j= B\left[ \binom{d-1}{\frac{d+k}{2}-1}-\binom{d-1}{\frac{d+k}{2}} \right]\tilde{x}_j.
\end{equation}
Indeed, for all $\tilde{z}\in\{-B,B\}^d$, for all $\tilde{x}\in\{-1,1\}^d$, and for all $k\in\{0,\ldots,d\}$ with the same parity as $d$, it holds
$$
\langle \tilde{z},\tilde{x}\rangle =k\cdot B \iff
\left\lbrace \begin{array}{ll}
\tilde{z}_j=B\tilde{x}_j &\text{ for $\frac{d+k}{2}$ elements }  j\in\llbr 1,d\rrbr\\
\tilde{z}_j=-B\tilde{x}_j  & \text{ for $\frac{d-k}{2}$ elements } j\in\llbr 1,d\rrbr.
\end{array}\right.
$$
Setting $D_{k,\tilde{x}}=\{\tilde{z}\in\{-B,B\}^d : \langle \tilde{z},\tilde{x}\rangle =k\cdot B\}$, it thus holds
\begin{align*}
\sum_{\tilde{z}\in D_{k,\tilde{x}}}\tilde{z}_j&=\sum_{\tilde{z}\in D_{k,\tilde{x}}}B\tilde{x}_j\1\left(\tilde{z}_j=B\tilde{x}_j \right)-\sum_{z\in D_{k,\tilde{x}} }B\tilde{x}_j\1\left(\tilde{z}_j=-B\tilde{x}_j \right)\\
&=B\left[\Card\left(\tilde{z}\in D_{k,\tilde{x}} : \tilde{z}_j=B\tilde{x}_j \right) -\Card\left(\tilde{z}\in D_{k,\tilde{x}} : \tilde{z}_j=-B\tilde{x}_j \right) \right] \tilde{x}_j\\
&=B\left[ \binom{d-1}{\frac{d+k}{2}-1}-\binom{d-1}{\frac{d+k}{2}} \right]\tilde{x}_j.
\end{align*}
We now end the proof of Proposition \ref{Prop Expectation of the private data scenario 2} when $d$ is odd.
Combining \eqref{Proof Expectation of the private data scenario 2, Intermediary result} with \eqref{Proof Expectation of the private data scenario 2, sum z when d odd}, we obtain for $d$ odd
 $$
\sum_{z\in A_{\tilde{x}}}\tilde{z}=B\binom{d-1}{\frac{d-1}{2}}\tilde{x},
$$
and the choice of $B$ yields
\begin{align*}
\Eb\left[\tilde{Z} \mid X=x \right]&=\frac{2\pi_\alpha-1}{2^{d-1}}B\binom{d-1}{\frac{d-1}{2}}\sum_{\tilde{x}\in\{-1,1\}^d}\Pb\left(\tilde{X}=\tilde{x} \mid X=x \right)\cdot \tilde{x}\\
&=\Eb\left[\tilde{X}\mid X=x  \right]
\end{align*}
Since for all $j\in\llbr 1,d\rrbr$ it holds
$$
\Eb\left[ \tilde{X}_j \mid X=x \right]=\sgn[x_j],
$$
we obtain for $d$ odd
$$
\Eb\left[Z \mid X=x \right]=\Eb\left[\tilde{Z} \mid X=x \right]=\Eb\left[\tilde{X}\mid X=x  \right]=f(x),
$$
which proves Proposition \ref{Prop Expectation of the private data scenario 2} when $d$ is odd.
From now on, we assume that $d$ is even.
Combining \eqref{Proof Expectation of the private data scenario 2, Intermediary result} with  \eqref{Proof Expectation of the private data scenario 2, sum z when d even}, we obtain
$$
\sum_{\tilde{z}\in A_{\tilde{x}}}\tilde{z}=B\binom{d-1}{\frac{d}{2}}\tilde{x} + \sum_{\substack{ \tilde{z}\in\{-B,B\}^d: \\ \langle \tilde{z},\tilde{x}\rangle=0 \\ \tilde{z}_1=B\tilde{x}_1}}\tilde{z}.
$$
Now, observe that for $\tilde{z}\in\{-B,B\}^d$ and $\tilde{x}\in\{-1,1\}^d$ it holds $\langle \tilde{z},\tilde{x}\rangle=0$ if and only if $\tilde{z}_j=B\tilde{x}_j$ for exactly $d/2$ subscripts $j\in\llbr 1,d\rrbr$ and $\tilde{z}_j=-B\tilde{x}_j$ for exactly $d/2$ subscripts $j\in\llbr 1,d\rrbr$.
We thus have
\begin{align*}
\sum_{\substack{ \tilde{z}\in\{-B,B\}^d: \\ \langle \tilde{z},\tilde{x}\rangle=0 \\ \tilde{z}_1 =B\tilde{x}_1}}\tilde{z}_1 &=B\tilde{x}_1\cdot\Card\left(\left\{ \tilde{z}\in\{-B,B\}^d : \langle \tilde{z},\tilde{x}\rangle=0 \text{ and }\tilde{z}_1=B\tilde{x}_1 \right\} \right)\\
&=B\binom{d-1}{\frac{d}{2}-1}\tilde{x}_1,
\end{align*}
and for $j\geq 2$ it holds
\begin{align*}
\sum_{\substack{ \tilde{z}\in\{-B,B\}^d: \\ \langle \tilde{z},\tilde{x}\rangle=0 \\ \tilde{z}_1 =B\tilde{x}_1}}\tilde{z}_j &= B\tilde{x}_j\left[\Card\left(\left\{ \tilde{z}\in\{-B,B\}^d : \langle \tilde{z},\tilde{x}\rangle=0, \tilde{z}_1=B\tilde{x}_1, \tilde{z}_j=B\tilde{x}_j \right\} \right)\right.\\
&\hspace{2cm}\left. -\Card\left(\left\{ \tilde{z}\in\{-B,B\}^d : \langle \tilde{z},\tilde{x}\rangle=0, \tilde{z}_1=B\tilde{x}_1, \tilde{z}_j=-B\tilde{x}_j \right\} \right) \right]\\
&=B\left[\binom{d-2}{\frac{d}{2}-2}-\binom{d-2}{\frac{d}{2}-1}  \right]\tilde{x}_j.
\end{align*}
We thus obtain
$$
\sum_{\tilde{z}\in A_{\tilde{x}}}\tilde{z}_j=
\begin{cases}
B\binom{d}{\frac{d}{2}}\tilde{x}_1 & \text{ if } j=1\\
B\left[\binom{d-1}{\frac{d}{2}}+\binom{d-2}{\frac{d}{2}-2} -\binom{d-2}{\frac{d}{2}-1}  \right]\tilde{x}_j & \text{ if } j\in\llbr 2,d\rrbr
\end{cases}.
$$
The choice
$$
B=\frac{2^{d-1}}{2\pi_\alpha-1}\cdot \frac{(\frac{d}{2}-1)!\frac{d}{2}!}{(d-2)!(d-2)}
$$
then yields
\begin{align*}
\Eb\left[\tilde{Z}_j \mid X=x \right]&=
\begin{cases}
\frac{(2\pi_\alpha-1)B}{2^{d-1}}\binom{d}{\frac{d}{2}}\sum_{\tilde{x}\in\{-1,1\}^d}\tilde{x}_1\Pb\left( \tilde{X}=\tilde{x}\mid X=x \right) & \text{ if } j=1\\
\frac{(2\pi_\alpha-1)B}{2^{d-1}}\cdot \frac{(d-2)!(d-2)}{(\frac{d}{2}-1)!\frac{d}{2}!} \sum_{\tilde{x}\in\{-1,1\}^d}\tilde{x}_j\Pb\left( \tilde{X}=\tilde{x}\mid X=x \right) & \text{ if } j\in\llbr 2,d\rrbr
\end{cases}\\
&=\begin{cases}
\frac{2(d-1)}{d-2}\sum_{\tilde{x}\in\{-1,1\}^d}\tilde{x}_1\Pb\left( \tilde{X}=\tilde{x}\mid X=x \right) & \text{ if } j=1\\
\sum_{\tilde{x}\in\{-1,1\}^d}\tilde{x}_j\Pb\left( \tilde{X}=\tilde{x}\mid X=x \right) & \text{ if } j\in\llbr 2,d\rrbr
\end{cases}.
\end{align*}
Thus, it holds $\Eb\left[Z_j \mid X=x \right]=\sum_{\tilde{x}\in\{-1,1\}^d}\Pb\left( \tilde{X}=\tilde{x}\mid X=x \right)\tilde{x}_j$ for all $j\in\llbr 1,d \rrbr$, and
$$
\Eb[Z\mid X=x]=\sum_{\tilde{x}\in\{-1,1\}^d}\Pb\left( \tilde{X}=\tilde{x}\mid X=x \right)\tilde{x}=\Eb\left[\tilde{X}\mid X=x\right]=f(x).
$$

\subsection{Asymptotic analysis of the value $K_d$ defined in \eqref{Kd}}\label{App Proof Asymptotic Property Kd}

\begin{lemma}\label{Lem Asymptotic Property Kd}
The value $K_d$ defined in (\ref{Kd}) behaves asymptotically in $d$ as
$$
K_d\underset{d\rightarrow \infty}{\sim} \sqrt{\frac{\pi}{2}}\sqrt{d}.
$$
In particular, it holds $K_d\lesssim  \sqrt{d}$ for $d$ large enough.
\end{lemma}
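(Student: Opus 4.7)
The plan is to treat the two parity cases of $d$ separately, reduce each expression for $1/K_d$ to a central binomial coefficient, and then apply Stirling's approximation in the form $\binom{2m}{m} \sim 4^m/\sqrt{\pi m}$ as $m \to \infty$.

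First I would dispatch the odd case. Write $d = 2m+1$, so that
\[
\frac{1}{K_d} = \frac{1}{2^{2m}} \binom{2m}{m}.
\]
Stirling's asymptotic $\binom{2m}{m} \sim 4^m/\sqrt{\pi m}$ immediately gives $1/K_d \sim 1/\sqrt{\pi m} = 1/\sqrt{\pi(d-1)/2}$, hence $K_d \sim \sqrt{\pi d/2}$.

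For the even case I would set $d = 2m$ and rewrite
\[
\frac{1}{K_d} = \frac{(2m-2)!\,(2m-2)}{2^{2m-1}\,(m-1)!\,m!} = \frac{2(m-1)}{2^{2m-1}\,m}\,\binom{2m-2}{m-1},
\]
by factoring $\binom{2m-2}{m-1} = (2m-2)!/((m-1)!)^{2}$ and pulling out the extra factor of $m$ in the denominator. Applying Stirling to $\binom{2m-2}{m-1} \sim 4^{m-1}/\sqrt{\pi(m-1)}$ yields
\[
\frac{1}{K_d} \sim \frac{2(m-1) \cdot 4^{m-1}}{2^{2m-1}\,m\,\sqrt{\pi(m-1)}} = \frac{\sqrt{m-1}}{m\sqrt{\pi}} \sim \frac{1}{\sqrt{\pi m}} = \frac{1}{\sqrt{\pi d/2}},
\]
so again $K_d \sim \sqrt{\pi d/2} = \sqrt{\pi/2}\,\sqrt{d}$.

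Combining the two cases, the equivalence $K_d \sim \sqrt{\pi/2}\,\sqrt{d}$ holds as $d\to\infty$, and in particular $K_d \lesssim \sqrt{d}$ for $d$ large enough. There is no real obstacle here: the proof is a direct computation once the $(d-2)!\,(d-2)/((d/2-1)!\,(d/2)!)$ block is recognised as $m/(m-1) \cdot \binom{2m-2}{m-1}$ (up to constants) in the even case. The only mild care needed is to keep track of the shift by $1$ in the odd case ($m$ versus $(d-1)/2$) and the extra $(d-2)/m = 2(m-1)/m \to 2$ factor in the even case, both of which contribute only to constants that combine to produce the same limit $\sqrt{\pi/2}$.
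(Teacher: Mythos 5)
Your proof is correct and takes essentially the same route as the paper, namely Stirling's approximation applied to each parity case of $d$; the only difference is that you package the factorials into a central binomial coefficient and invoke the standard asymptotic $\binom{2m}{m}\sim 4^{m}/\sqrt{\pi m}$, which makes the bookkeeping a little cleaner than the paper's direct expansion of each factorial.
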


The proof relies on Stirling's approximation.
We first deal with the case where $d$ is odd.
In this case, Stirling's approximation yields
$$
K_d=2^{d-1}\frac{\left[\left(\frac{d-1}{2} \right)!\right]^2}{(d-1)!}\underset{d\rightarrow \infty}{\sim} 2^{d-1}\cdot \pi(d-1)\left(\frac{d-1}{2e} \right)^{d-1}\cdot \left[\sqrt{2\pi (d-1)}\left(\frac{d-1}{e} \right)^{d-1}  \right]^{-1}.
$$
The right-hand side of this asymptotic equivalence is equal to $\sqrt{\pi/2}\sqrt{d-1}$.
We thus obtain $K_d\underset{d\rightarrow \infty}{\sim}\sqrt{\pi/2}\sqrt{d}$.

We now assume that $d$ is even.
in this case, Stirling's approximation yields
$$
K_d= \frac{2^{d-1}(\frac{d}{2}-1)!\frac{d}{2}!}{(d-2)!(d-2)}\underset{d\rightarrow \infty}{\sim}\frac{2^{d-1}}{d-2}\cdot \pi\sqrt{(d-2)d}\left(\frac{d-2}{2e} \right)^{\frac{d}{2}-1}\left( \frac{d}{2e}\right)^{\frac{d}{2}}\cdot \left[ \sqrt{2\pi(d-2)}\left(\frac{d-2}{e} \right)^{d-2}\right]^{-1}
$$
The right-hand side of this asymptotic equivalence is equal to
$$
\frac{\sqrt{\pi}}{e\sqrt{2}}\sqrt{d}(d-2)^{-\frac{d}{2}}d^{\frac{d}{2}}=\frac{\sqrt{\pi}}{e\sqrt{2}}\sqrt{d}\exp\left(-\frac{d}{2}\log\left(1-\frac{2}{d} \right) \right)\underset{d\rightarrow \infty}{\sim}\sqrt{\frac{\pi}{2}}\sqrt{d},
$$
which ends the proof.

\subsection{Proof of Proposition \ref{Prop Upper Bound Large a scenario 2}}\label{App Proof upper bound large a scenario 2}

The proof is similar to the one we made in the Coordinate Local case (Proposition \ref{Prop Upper Bound on the risk for large a Scenario 1}).
However, in the Coordinate Global case, for all $j\in\llbr 1,d\rrbr$ the $(Z^{i}_j)_i$ are bounded random variables, which will enable us to use Hoeffding's inequality instead of Lemma \ref{Lemme Troncation Sous Gaussienne} and Bernstein's inequality.

Writting
$$
\vert \hat{\eta}^+-\eta\vert=\sum_{j : \eta_j=0}\hat{\eta}^+_j+\sum_{j : \eta_j=1}(1-\hat{\eta}^+_j),
$$
we have
$$
\Eb\left[\frac{1}{s}\vert\hat{\eta}^+-\eta\vert \right]=\frac{1}{s}\sum_{j : \eta_j=0}\underbrace{\Pb\left(\frac{1}{n}\sum_{i=1}^nZ^{i}_j\geq \tau\right)}_{=T_{1,j}}+\frac{1}{s}\sum_{j : \eta_j=1}\underbrace{\Pb\left(\frac{1}{n}\sum_{i=1}^nZ^{i}_j < \tau\right)}_{=T_{2,j}}.
$$
We first study $T_{1,j}$.
For $j$ satisfying $\eta_j=0$, it holds
$$
\Eb\left[ Z^{i}_j \right]=\Eb\left[\Eb\left[ Z^{i}_j \mid X^{i}\right]\right]=\Eb\left[ \sgn[X^{i}_j] \right]=\Eb\left[ \sgn[\sigma\xi^{i}_j] \right]=0,
$$
where we have used Proposition \ref{Prop Expectation of the private data scenario 2} and the fact that the distribution of the random variable $\xi_j^i$ is symmetric.
Thus, Hoeffding's inequality yields
$$
T_{1,j}=\Pb\left(\sum_{i=1}^n(Z^{i}_j-\Eb[ Z^{i}_j])\geq n\tau\right)\leq \exp\left(-\frac{n\tau^2}{2B^2} \right).
$$
We now study $T_{2,j}$.
Let $j\in\llbr 1,d\rrbr$ such that $\eta_j=1$.
It holds
\begin{align*}
T_{2,j}&=\Pb\left(\frac{1}{n}\sum_{i=1}^n\left(Z^{i}_j- \Eb\left[Z^{i}_j \right] \right)+\frac{1}{n}\sum_{i=1}^n\Eb\left[Z^{i}_j \right]< \tau\right)\\
&=\Pb\left(\frac{1}{n}\sum_{i=1}^n\left(-Z^{i}_j- \Eb\left[-Z^{i}_j \right] \right)>\Eb\left[Z^{1}_j \right] - \tau\right).
\end{align*}
Proposition \ref{Prop Expectation of the private data scenario 2} gives
$$
\Eb\left[Z^{1}_j \right] =\Eb\left[\sgn[X^{1}_j ]\right] =\Eb\left[\sgn[\theta_j+\sigma \xi_j^1 ]\right]\geq \Eb\left[\sgn[a+\sigma \xi_j^1 ]\right],
$$
and  we have proved in Appendix \ref{App proof Upper Bound Large a scenario 1} that it holds
$$
\Eb\left[\sgn[a+\sigma \xi_1]\right]\geq 2\Phi\left(\sqrt{c}\frac{a}{\sigma}\right)-1,
$$
where $\Phi$ denotes the standard Gaussian cumulative distribution function.
Thus, if $a\geq 2\sigma$, it holds $\Eb\left[\sgn[a+\sigma \xi^{1}_j]\right]\geq C_1$ with $C_1=2\Phi(2\sqrt{c})-1$, and
\begin{align*}
T_{2,j}&\leq \Pb\left(\frac{1}{n}\sum_{i=1}^n\left(-Z^{i}_j- \Eb\left[-Z^{i}_j \right] \right)>C_1 - \tau\right)\\
&\leq \exp\left(-\frac{n(C_1-\tau)^2}{2B^2} \right)
\end{align*}
according to Hoeffding's inequality if $C_1-\tau>0$.
This yields
$$
\Eb\left[\frac{1}{s}\vert\hat{\eta}^+-\eta\vert \right]\leq \frac{d-\vert S\vert}{s}\exp\left(-\frac{n\tau^2}{2B^2 }\right)+\frac{\vert S\vert}{s}\exp\left(-\frac{n(C_1-\tau)^2}{2B^2}\right).
$$
The proof of the second statement of Proposition \ref{Prop Upper Bound Large a scenario 2} is straightforward.

\subsection{Proof of Proposition \ref{Prop Upper Bound Small a scenario 2}}\label{App Proof upper bound small a scenario 2}

The beginning of the proof is similar to the proof of Proposition \ref{Prop Upper Bound Large a scenario 2}.
It holds
$$
\Eb\left[\frac{1}{s}\vert\hat{\eta}^+-\eta\vert \right]=\frac{1}{s}\sum_{j : \eta_j=0}\underbrace{\Pb\left(\frac{1}{n}\sum_{i=1}^nZ^{i}_j\geq \tau\right)}_{=T_{1,j}}+\frac{1}{s}\sum_{j : \eta_j=1}\underbrace{\Pb\left(\frac{1}{n}\sum_{i=1}^nZ^{i}_j < \tau\right)}_{=T_{2,j}},
$$
with
$$
T_{1,j}\leq \exp\left(-\frac{n\tau^2}{2B^2} \right),
$$
and
$$
T_{2,j}\leq \Pb\left(\frac{1}{n}\sum_{i=1}^n\left(-Z^{i}_j- \Eb\left[-Z^{i}_j \right] \right)>\Eb\left[\sgn[a+\sigma \xi_j^{i}] \right] - \tau\right).
$$
Moreover, we have proved in Appendix \ref{App Proof Upper Bound Small a scenario 1} that  $\Eb\left[\sgn[a+\sigma \xi_1]\right]\geq 2p(2)a/\sigma$ for $a/\sigma<2$.
Thus, if $\tau< 2p(2)a/\sigma$, Hoeffding's inequality yields
$$
T_{2,j}\leq \Pb\left(\frac{1}{n}\sum_{i=1}^n\left(-Z^{i}_j- \Eb\left[-Z^{i}_j \right] \right)>\frac{2p(2)a}{\sigma} - \tau\right)\leq \exp\left(-\frac{n(2p(2)a/\sigma-\tau)^2}{2B^2} \right).
$$

\subsection{Proof of Proposition \ref{Prop Lower Bound scenario 2}}\label{App Proof Lower bound scenario 2}

For $i=1,\ldots, d$, define the vector $\omega_i\in\{0,1\}^d$ by $\omega_{i,j}=1$ if $j=i$, $\omega_{i,j}=0$ if $j\neq i$ and define $P_{\omega_i}$ as the multivariate distribution of the random vector $X=a\omega_i+\sigma\xi$.
For $i\neq j$ it holds
$$
\vert \eta(P_{\omega_i})- \eta(P_{\omega_j})\vert = \vert \omega_i-\omega_j\vert =2.
$$
The private Fano method (Proposition 2 in \cite{DuchiJordanWainwright2018MinimaxOptimalProcedure}) thus yields
$$
\inf_{Q\in \Qc_{\alpha}}\inf_{\hat{\eta}\in\Tc}\sup_{\theta\in\Theta_d^+(s,a)}\Eb_{Q(P_\theta^{\otimes n})}\vert\hat{\eta}-\eta\vert\geq \frac{1}{2}\left\{ 1-\frac{n(e^\alpha-1)^2}{d\log(d)}\left[ \sup_{\gamma\in \Bb_\infty(\Rb^d)}\sum_{i=1}^d(\varphi_{\omega_i}(\gamma))^2 \right] -\frac{\log(2)}{\log(d)}\right\},
$$
with
$$
\Bb_\infty(\Rb^d)=\left\{\gamma\in L_\infty(\Rb^d) \mid \Vert\gamma\Vert_\infty\leq 1 \right\},
$$
$$
\varphi_{\omega_i}(\gamma)=\int_{\Xc}\gamma(x)(d P_{\omega_i}(x)-d\bar{P}(x))=\int_{\Rb^d}\gamma(x)(f_{\omega_i}(x)-\bar{f}(x))dx,
$$
where $f_{\omega_i}$ is the density of $P_{\omega_i}$ and $\bar{f}=(1/d)\sum_{i=1}^df_{\omega_i}$.
We have
\begin{align*}
\sum_{i=1}^d \left( \varphi_{\omega_i}(\gamma)\right)^2&=\sum_{i=1}^d\left(\int_{\Rb^d}\gamma(x)(f_{\omega_i}(x)-\bar{f}(x))dx \right)\left(\int_{\Rb^d}\gamma(y)(f_{\omega_i}(y)-\bar{f}(y))dy \right)\\
&= \int_{\Rb^d}\gamma(x)\left[\int_{\Rb^d}\left(\sum_{i=1}^d(f_{\omega_i}(x)-\bar{f}(x))(f_{\omega_i}(y)-\bar{f}(y)) \right)\gamma(y) dy \right]dx.
\end{align*}
Let $\bar{p}$ denote the density of the random vector $\sigma \xi$.
If $\gamma$ belongs to $ \Bb_\infty(\Rb^d)$ then it also belongs to $ L_2(\Rb^d, dq)$ and, moreover, $\Vert \gamma\Vert_{L_2(\Rb^d, d\bar{p})}\leq 1$. We can write
\begin{align*}
\sum_{i=1}^d \left( \varphi_{\omega_i}(\gamma)\right)^2&=\int_{\Rb^d}\gamma(x)\left[\int_{\Rb^d}\left(\sum_{i=1}^d\frac{f_{\omega_i}(x)-\bar{f}(x)}{\bar{p}(x)}\cdot\frac{f_{\omega_i}(y)-\bar{f}(y)}{\bar{p}(y)} \right)\gamma(y)\bar{p}(y) dy \right]\bar{p}(x)dx\\
&= \langle \gamma, K\gamma \rangle_{L_2(\Rb^d, d\bar{p})},
\end{align*}
where
$$
\begin{array}{ccccc}
K & : & L_2(\Rb^d, d\bar{p}) & \to & L_2(\Rb^d, d\bar{p})\\
 & & \gamma & \mapsto & \int_{\Rb^d}\left(\sum_{i=1}^d\frac{f_{\omega_i}-\bar{f}}{\bar{p}}(\cdot)\cdot\frac{f_{\omega_i}(y)-\bar{f}(y)}{\bar{p}(y)} \right)\gamma(y)\bar{p}(y) dy \\
\end{array}
$$
%Indeed, condition \eqref{eq:cond_phi_bound_scenario} ensures that
For any $\omega\in \{0,1\}^d$, $f_{\omega}\in L_2(\Rb^d, d\bar{p})$. Note that we can rewrite
$$
K\gamma= \sum_{i=1}^d\left[\left\langle \frac{f_{\omega_i}-\bar{f}}{\bar{p}},\gamma \right\rangle_{L_2(\Rb^d, d\bar{p})}\cdot \frac{f_{\omega_i}-\bar{f}}{\bar{p}}\right].
$$
This expression implies that  $K$ is an operator of finite rank (it is thus a compact operator), $K$ is self-adjoint, and $\langle K\gamma, \gamma\rangle \geq 0$ for all $\gamma \in L_2(\Rb^d, d\bar{p})$. In particular, the last point implies that the eigenvalues of $K$ are non-negative.
We have
\begin{align*}
\sup_{\gamma\in\Bb_\infty(\Rb^d)}\sum_{i=1}^d \left( \varphi_{\omega_i}(\gamma)\right)^2 &\leq  \sup_{\{\gamma\in L_2(\Rb^d, d\bar{p}): \Vert \gamma\Vert_{L_2(\Rb^d, d\bar{p})}^2\leq 1\}}\langle \gamma, K\gamma \rangle_{L_2(\Rb^d, d\bar{p})}\\
&=\sup_{\{\gamma\in L_2(\Rb^d, d\bar{p}): \Vert \gamma\Vert_{L_2(\Rb^d, d\bar{p})}^2=1\}}\langle \gamma, K\gamma \rangle_{L_2(\Rb^d, d\bar{p})}\\
&=\sup_{\{\gamma\in L_2(\Rb^d, d\bar{p}): \Vert \gamma\Vert_{L_2(\Rb^d, d\bar{p})}^2=1\}}\left\vert\langle  \gamma, K\gamma \rangle_{L_2(\Rb^d, d\bar{p})}\right\vert\\
&=\Vert K\Vert,
\end{align*}
where the last equality follows from the fact that $\left(L_2(\Rb^d, d\bar{p}), \langle \cdot,\cdot\rangle_{L_2(\Rb^d, d\bar{p})}\right)$ is an Hilbert space and $K$ is self-adjoint.
Since $K$ is also compact and since the eigenvalues of $K$ are non-negative it follows
$$
\sup_{\gamma\in\Bb_\infty(\Rb^d)}\sum_{i=1}^d \left( \varphi_{\omega_i}(\gamma)\right)^2\leq \Vert K\Vert =\max \{ \vert \lambda \vert : \lambda \in VP(T)\}=\max \{  \lambda  : \lambda \in VP(T)\},
$$
where $VP(T)$ is the set of all the eigenvalues of $K$.
It remains to compute this maximum.
By definition, $\lambda$ is an eigenvalue of $K$ if $\lambda I-K$ is not injective.
For $\lambda\neq 0$, the Fredholm alternative for compact self-adjoint operators (see for instance  \cite{Hirsch_Lacombe_analyse_fonctionnelle}) implies that $\lambda I-K$ is not injective if and only if $\lambda I-K$ is not surjective.
Thus, the non-zero eigenvalues of $K$ are the values of $\lambda \in\Rb^*$ such that the operator $\lambda I-K$ is not surjective.
For $\lambda\in\Rb$,  let $A_{\lambda}$ be the matrix with coefficients
$$
(A_\lambda)_{ij}=\left\langle \frac{f_{\omega_i}-\bar{f}}{\bar{p}},\frac{f_{\omega_j}-\bar{f}}{\bar{p}}\right\rangle_{L_2(\Rb^d, d\bar{p})}-\lambda \delta_{ij}, \quad i,j\in\llbr 1,d\rrbr,
$$
where $\delta$ is the Kronecker delta.
The following result proves that if $\lambda$ is a non-zero eigenvalue of $K$ then it holds $\Det(A_\lambda)= 0$.
\begin{lemma}
Let $\lambda\in\Rb$, $\lambda\neq 0$. If $\Det(A_\lambda)\neq 0$ then $\lambda I-K$ is surjective.
\end{lemma}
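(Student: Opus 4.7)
The strategy is to exploit the finite rank structure of $K$ to reduce the surjectivity of $\lambda I - K$ on $L_2(\mathbb{R}^d,d\bar{p})$ to a $d\times d$ linear system whose matrix is exactly $A_\lambda$ (up to sign). Set $e_i := (f_{\omega_i}-\bar{f})/\bar{p}\in L_2(\mathbb{R}^d,d\bar{p})$ for $i=1,\ldots,d$, so that the operator defined above rewrites as
$$
K\gamma = \sum_{i=1}^d \langle e_i, \gamma\rangle_{L_2(\mathbb{R}^d, d\bar{p})}\, e_i,
$$
and in particular $\mathrm{Range}(K)\subset \mathrm{span}(e_1,\ldots,e_d)$.

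Fix $g\in L_2(\mathbb{R}^d,d\bar{p})$; I want to produce $\gamma\in L_2(\mathbb{R}^d,d\bar{p})$ with $\lambda \gamma - K\gamma = g$. Since $\lambda\neq 0$, any such $\gamma$ must satisfy $\gamma = \lambda^{-1}(g + K\gamma)$, and the second term lies in $\mathrm{span}(e_1,\ldots,e_d)$. I therefore look for $\gamma$ of the form
$$
\gamma = \lambda^{-1}\Bigl(g + \sum_{i=1}^d c_i\, e_i\Bigr),
$$
for unknown coefficients $c=(c_1,\ldots,c_d)^\top \in \mathbb{R}^d$. Plugging this ansatz into $\lambda\gamma - K\gamma = g$ and identifying the coefficient of each $e_i$ reduces to the compatibility conditions $c_i = \langle e_i,\gamma\rangle_{L_2(\mathbb{R}^d, d\bar{p})}$, i.e., taking the inner product with $e_j$ yields, for every $j\in\llbr 1,d\rrbr$,
$$
\lambda\, c_j \;-\; \sum_{i=1}^d \langle e_i,e_j\rangle_{L_2(\mathbb{R}^d, d\bar{p})}\, c_i \;=\; \langle e_j, g\rangle_{L_2(\mathbb{R}^d, d\bar{p})} .
$$

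In matrix form this is $-A_\lambda\, c = v$, where $v_j = \langle e_j, g\rangle_{L_2(\mathbb{R}^d, d\bar{p})}$ and I have used symmetry $\langle e_i, e_j\rangle = \langle e_j, e_i\rangle$. By the hypothesis $\det(A_\lambda)\neq 0$, the matrix $-A_\lambda$ is invertible, so this linear system admits a (unique) solution $c\in\mathbb{R}^d$. Inserting this $c$ into the ansatz produces a $\gamma\in L_2(\mathbb{R}^d,d\bar{p})$ with $\lambda\gamma - K\gamma = g$; as $g$ was arbitrary, $\lambda I - K$ is surjective.

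The argument is entirely bookkeeping once the finite rank structure of $K$ is made explicit; there is no genuine obstacle. The only point requiring care is matching conventions: the ansatz forces the matrix of the reduced system to be the Gram--shifted matrix with entries $\lambda\delta_{ij}-\langle e_i,e_j\rangle$, which is $-A_\lambda$, so nonvanishing of $\det(A_\lambda)$ is indeed what is needed.
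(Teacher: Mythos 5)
Your proof is correct and is essentially the same argument as in the paper: exploit the finite-rank structure of $K$, posit a preimage $\gamma$ of the form $\lambda^{-1}(g+\sum_i c_i e_i)$, and reduce to the invertible $d\times d$ system governed by $A_\lambda$ (the paper writes the coefficients with the opposite sign, $\xi=-c$, but that is cosmetic). The only cosmetic imprecision is the phrase ``identifying the coefficient of each $e_i$,'' which hints at linear independence of the $e_i$ that is neither assumed nor needed; what you actually do is impose $c_i=\langle e_i,\gamma\rangle$ as a \emph{sufficient} set of conditions and verify that the resulting $\gamma$ solves the equation, exactly as the paper does.
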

\begin{proof}
To lighten the notation, set $\langle \cdot,\cdot\rangle_{2,\bar{p}}=\langle \cdot,\cdot\rangle_{L_2(\Rb^d, d\bar{p})}$.
Let $\lambda\in\Rb$, $\lambda\neq 0$ and assume that $\Det(A_\lambda)\neq 0$.
We prove that for all $g\in L_2(\Rb^d,d\bar{p})$ there exists $\gamma\in L_2(\Rb^d,d\bar{p})$ such that $g=(\lambda I-K)\gamma$.
Consider $g\in L_2(\Rb^d,d\bar{p})$.
Since $\Det(A_\lambda)\neq 0$, the matrix $A_\lambda$ is invertible and for all $v\in \Rb^d$ there exists $\xi\in\Rb^d$ such that $v=A_\lambda \xi$. In particular, for
$$
v=\left( \left\langle \frac{f_{\omega_1}-\bar{f}}{\bar{p}},g\right\rangle_{2,\bar{p}},\ldots, \left\langle\frac{f_{\omega_d}-\bar{f}}{\bar{p}},g\right\rangle_{2,\bar{p}}\right)^T,
$$
there exists $\xi\in\Rb^d$ such that $v=A_\lambda \xi$, that is
$$
\left\langle \frac{f_{\omega_i}-\bar{f}}{\bar{p}},g\right\rangle_{2,\bar{p}}=(A_\lambda\xi)_i=\sum_{j=1}^d \left\langle \frac{f_{\omega_i}-\bar{f}}{\bar{p}},\frac{f_{\omega_j}-\bar{f}}{\bar{p}}\right\rangle_{2,\bar{p}}\xi_j-\lambda \xi_i
$$
for all $i\in\llbr 1,d\rrbr$.
Define
$$
\gamma=\frac{1}{\lambda}g-\frac{1}{\lambda}\sum_{j=1}^d\xi_j\frac{f_{\omega_j}-\bar{f}}{\bar{p}}.
$$
We have
\begin{align*}
(\lambda I-K)\gamma&=\lambda \gamma-K\gamma\\
&=g-\sum_{i=1}^d\xi_i\frac{f_{\omega_i}-\bar{f}}{\bar{p}}-\sum_{i=1}^d\left[\left\langle \frac{f_{\omega_i}-\bar{f}}{\bar{p}},\gamma \right\rangle_{L_2(\Rb^d, d\bar{p})}\cdot \frac{f_{\omega_i}-\bar{f}}{\bar{p}}\right]\\
&=g-\sum_{i=1}^d\underbrace{\left[\xi_i+  \frac{1}{\lambda}\left\langle \frac{f_{\omega_i}-\bar{f}}{\bar{p}},g\right\rangle_{2,\bar{p}}-\frac{1}{\lambda}\sum_{j=1}^d \xi_j\left\langle \frac{f_{\omega_i}-\bar{f}}{\bar{p}},\frac{f_{\omega_j}-\bar{f}}{\bar{p}}\right\rangle_{2,\bar{p}} \right]}_{=0}\frac{f_{\omega_i}-\bar{f}}{\bar{p}}\\
&=g,
\end{align*}
which concludes the proof of the Lemma.
\end{proof}
We now find the values of $\lambda$ for which we have $\Det(A_\lambda)= 0$.
To do so, we first make explicit the coefficients of $A_\lambda$.
It holds
\begin{align*}
\left\langle \frac{f_{\omega_i}-\bar{f}}{\bar{p}},\frac{f_{\omega_j}-\bar{f}}{\bar{p}}\right\rangle_{2,\bar{p}}&=\left\langle \frac{f_{\omega_i}}{\bar{p}},\frac{f_{\omega_j}}{\bar{p}}\right\rangle_{2,\bar{p}}-\left\langle \frac{f_{\omega_i}}{\bar{p}},\frac{\bar{f}}{\bar{p}}\right\rangle_{2,\bar{p}}-\left\langle \frac{\bar{f}}{\bar{p}},\frac{f_{\omega_j}}{\bar{p}}\right\rangle_{2,\bar{p}}+\left\langle \frac{\bar{f}}{\bar{p}},\frac{\bar{f}}{\bar{p}}\right\rangle_{2,\bar{p}}\\
&=\left\langle \frac{f_{\omega_i}}{\bar{p}},\frac{f_{\omega_j}}{\bar{p}}\right\rangle_{2,\bar{p}}-\frac{1}{d}\sum_{k=1}^d \left\langle \frac{f_{\omega_i}}{\bar{p}},\frac{f_{\omega_k}}{\bar{p}}\right\rangle_{2,\bar{p}}-\frac{1}{d}\sum_{k=1}^d\left\langle \frac{f_{\omega_k}}{\bar{p}},\frac{f_{\omega_j}}{\bar{p}}\right\rangle_{2,\bar{p}}\\
&\hspace{2,5cm}+\frac{1}{d^2}\sum_{k=1}^d\sum_{l=1}^d\left\langle \frac{f_{\omega_k}}{\bar{p}},\frac{f_{\omega_l}}{\bar{p}}\right\rangle_{2,\bar{p}}.
\end{align*}
{Furthermore, due to the independence of the coordinates of the vector $\xi$, the scalar products $\left\langle \frac{f_{\omega_k}}{\bar{p}},\frac{f_{\omega_l}}{\bar{p}}\right\rangle_{2,\bar{p}}$ can only take two values. More precisely, recall that $P_0$ denotes the distribution of the random variable $\sigma\xi_1$ and $P_a$ the distribution of the random variable $a+\sigma\xi_1$, we get
\begin{align*}
\left\langle \frac{f_{\omega_i}}{\bar{p}},\frac{f_{\omega_j}}{\bar{p}}\right\rangle_{2,\bar{p}}
&=
\begin{cases}1 + \chi^2 (P_0,P_a)
%=  \int_{\mathbb{R}} \frac{p^2_a(y)}{p_0(y)}dy
& \text{if } j=i\\
1 & \text{if } j\neq i.
\end{cases}
\end{align*}

We thus obtain
$$
\left\langle \frac{f_{\omega_i}-\bar{f}}{\bar{p}},\frac{f_{\omega_j}-\bar{f}}{\bar{p}}\right\rangle_{2,\bar{p}}=
\begin{cases}
\left(1-\frac{1}{d} \right)\chi^2 (P_0,P_a) & \text{if } j=i\\
-\frac{1}{d} \chi^2(P_0,P_a) & \text{if } j\neq i.
\end{cases}
$$
Write
$$
C_1= \left(1-\frac{1}{d} \right)\chi^2(P_0,P_a)  ,
$$
and
$$
C_2=- \frac{1}{d} \chi^2(P_0,P_a).
$$
The matrix $A_\lambda$ has its diagonal elements equal to $C_1-\lambda$ and the other coefficients equal to $C_2$.
Operations on the rows and columns of $A_\lambda$ yield
\begin{align*}
\Det(A_\lambda)&= \left(C_1+(d-1)C_2-\lambda  \right)\left(C_1-C_2-\lambda \right)^{d-1}\\
&=-\lambda\left(\chi^2(P_0,P_a) -\lambda \right)^{d-1}
\end{align*}
Thus, the operator $K$ has only one non-zero eigenvalue and it is equal to  $\chi^2(P_0,P_a)$.
We finally obtain
\begin{align*}
\inf_{Q\in \Qc_{\alpha}}\inf_{\hat{\eta}\in\Tc}\sup_{\theta\in\Theta_d^+(s,a)}\Eb_{Q(P_\theta^{\otimes n})}\vert\hat{\eta}-\eta\vert&\geq \frac{1}{2}\left(1-\frac{n(e^\alpha-1)^2}{d\log(d)}
\chi^2(P_0,P_a) -\frac{\log(2)}{\log(d)}\right)\\
&\geq \frac{1}{4}\left(1- \frac{2n(e^\alpha-1)^2}{d\log(d)} \chi^2(P_0,P_a) \right),
\end{align*}
if $d \geq 4$.
To conclude with the proof of Proposition \ref{Prop Lower Bound scenario 2}, just use Lemma \ref{lem:bound_chi2} below.
}

\begin{lemma}
\label{lem:bound_chi2}Consider that the measure $P^{\xi_1}$ of the noise coordinates has a density $p=\exp(-\phi)$, where the potential
$\phi$ is two times continuously differentiable and has a curvature that is bounded from above by a constant $c_+$ as in \eqref{eq:upper_bound_hessian_intro}. Then it holds
\begin{equation}\label{eq:lem_chi2}
    \chi^{2}\left(P_{0},P_{a}\right)\leq\exp\left(c_+\left(\frac{a}{\sigma}\right)^{2}\right)-1.
\end{equation}
If the density $p$ is log-concave, with a potential with curvature bounded above by $c_+$ as in (3), then Inequality (\ref{eq:lem_chi2}) holds without assuming the differentiability of $\phi$. 
\end{lemma}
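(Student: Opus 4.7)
The plan is to bound $\chi^2(P_0,P_a)+1$ by reducing it to an exponential moment and then invoking the upper bound $\phi''\leq c_+$ (the twice-differentiable equivalent of \eqref{eq:upper_bound_hessian_intro}) twice: once via a pointwise Taylor estimate, and once through a sub-Gaussian bound for $\phi'(Y)$ under $Y\sim p$. After a change of variable $y\mapsto y/\sigma$ and writing $b:=a/\sigma$, one obtains
\[
\chi^{2}(P_{0},P_{a})+1=\int\frac{p(y)^{2}}{p(y-b)}\, dy=\Eb_{Y\sim p}\!\left[e^{\phi(Y-b)-\phi(Y)}\right],
\]
and the mean-value form of Taylor's theorem combined with $\phi''\leq c_+$ gives, for every $y$ and every $b$,
\[
\phi(y-b)-\phi(y)\;\leq\;-b\,\phi'(y)+\tfrac{c_+ b^{2}}{2}.
\]
Pulling the deterministic factor out of the expectation reduces the claim to the sub-Gaussian estimate
\[
f(b):=\Eb_{Y\sim p}\!\left[e^{-b\phi'(Y)}\right]\leq e^{c_{+}b^{2}/2}.
\]

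I expect this sub-Gaussian estimate to be the main technical step. The natural strategy is to differentiate under the integral, $f'(b)=-\Eb[\phi'(Y)e^{-b\phi'(Y)}]$, and then integrate by parts using the identity $\phi'(y)e^{-\phi(y)}=-(e^{-\phi(y)})'$. The boundary terms vanish because $\phi''\leq c_+$ forces $|\phi'(y)|$ to grow at most linearly in $y$, while $p(y)=e^{-\phi(y)}$ decays fast enough at infinity to kill $e^{-b\phi'(y)-\phi(y)}$. This yields the key identity
\[
f'(b)=b\,\Eb\!\left[\phi''(Y)\,e^{-b\phi'(Y)}\right]\leq b\,c_{+}\,f(b)\qquad\text{for }b\geq 0,
\]
and by the same manipulation the reverse inequality for $b\leq 0$ (obtained by multiplying the bound $\phi''e^{-b\phi'}\leq c_+ e^{-b\phi'}$ by the negative number $b$). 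In either case, integrating $(\log f)'\leq c_+ b$ starting from $f(0)=1$ gives $f(b)\leq e^{c_+ b^{2}/2}$, and combining with the previous display yields $\chi^{2}(P_{0},P_{a})+1\leq e^{c_{+}(a/\sigma)^{2}}$, which is \eqref{eq:lem_chi2}. As a sanity check, in the standard Gaussian case where $c_+=1$ and $\phi'(y)=y$, both the Taylor step and the sub-Gaussian step become equalities and one recovers the exact identity $\chi^{2}(P_{0},P_{a})+1=e^{(a/\sigma)^{2}}$, consistent with the remark made after the statement of Proposition \ref{Prop Lower Bound scenario 2}.

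For the second statement of the lemma, where $p$ is log-concave with potential satisfying \eqref{eq:upper_bound_hessian_intro} but not assumed differentiable, I will follow the same approximation recipe used in the proof of Lemma~\ref{lem:Bound_KL}: convolve $p$ with a Gaussian of small variance $\varepsilon$ to produce a smooth log-concave density whose potential remains twice continuously differentiable and still satisfies an upper curvature bound by $c_+$ (see \cite[Proposition 5.5]{SauWell14}), apply the already established inequality \eqref{eq:lem_chi2} to the smooth approximants, and let $\varepsilon\downarrow 0$, using Fatou's lemma on the integrand $p_\varepsilon(y)^2/p_\varepsilon(y-b)$ (which converges pointwise to $p(y)^2/p(y-b)$) to transfer the upper bound to the limit. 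The main technical obstacle throughout is justifying the differentiation under the integral and the integration by parts in the analysis of $f$; both rest on the decay of $p$ and the at-most-linear growth of $\phi'$ inherited from the upper curvature bound $\phi''\leq c_+$.
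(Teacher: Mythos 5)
Your decomposition $\chi^2(P_0,P_a)+1 = \Eb_{Y\sim p}\bigl[e^{\phi(Y-b)-\phi(Y)}\bigr] \le e^{c_+ b^2/2}\,\Eb_{Y\sim p}\bigl[e^{-b\phi'(Y)}\bigr]$ is correct, and the plan of combining a pointwise Taylor bound with a sub-Gaussian moment-generating-function estimate for $\phi'(Y)$ is sound. The paper instead performs two Taylor expansions around the single point $x-\bar a$ to obtain the pointwise inequality $\phi(x)-2\phi(x-\bar a)\le -\phi(x-2\bar a)+c_+\bar a^2$ and then integrates directly; after the change of variable $y=x-\bar a$, $b=\bar a$, this is exactly what one gets by unfolding your two steps, so the two arguments coincide conceptually, though the paper's version stays purely algebraic throughout.

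There is, however, a gap in your proposed proof of the sub-Gaussian estimate. You differentiate $f(b)=\Eb[e^{-b\phi'(Y)}]$, integrate by parts using $\phi'(y)e^{-\phi(y)}=-(e^{-\phi(y)})'$, and dismiss the boundary contribution on the grounds that ``$\phi''\le c_+$ forces $|\phi'(y)|$ to grow at most linearly.'' That claim is false: $\phi''\le c_+$ is a one-sided curvature bound and gives no lower bound on $\phi''$, hence no control on how rapidly $\phi'$ may decrease. (Linear growth of $|\phi'|$ would follow from a two-sided bound $|\phi''|\le c_+$, or from strong convexity $\phi''\ge c_->0$, which is assumed elsewhere in the paper but not in this lemma.) Without such control, neither the vanishing of $e^{-b\phi'(y)}e^{-\phi(y)}$ at $\pm\infty$ nor the differentiation under the integral sign is granted by the lemma's hypotheses, and the Herbst-type step does not close as written.

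The fix is short and keeps your two-step structure. Taylor at $y$ in the opposite direction gives, for every $y$ and $b$,
\[
\phi(y+b)-\phi(y)-b\phi'(y)\le \tfrac{c_+ b^2}{2},
\qquad\text{equivalently}\qquad
-b\phi'(y)-\phi(y)\ \le\ -\phi(y+b)+\tfrac{c_+ b^2}{2}.
\]
Exponentiating and integrating over $\Rb$ yields $\Eb\bigl[e^{-b\phi'(Y)}\bigr]\le e^{c_+ b^2/2}\int e^{-\phi(y+b)}\,dy=e^{c_+ b^2/2}$ with no boundary terms and under exactly the lemma's assumptions. Combining this with your first Taylor step reconstitutes the paper's pointwise inequality and hence its proof. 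Your approximation argument for the non-smooth log-concave case (Gaussian smoothing and a limiting argument) matches the paper's.
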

Note that Lemma \ref{lem:bound_chi2} is sharp in the sense that in the Gaussian case, $c_+=1$ holds and Inequality \eqref{eq:lem_chi2} turns out to be an equality. Note also that log-concavity is actually not needed in Lemma \ref{lem:bound_chi2}, since we only require an upper bound on the curvature of the potential $\phi$.
\begin{proof}
Denote $\bar{a}=a/\sigma$. It suffices to show the following inequality,
\begin{equation}
\int_{\mathbb{R}}\frac{p^{2}(x-\bar{a})}{p(x)}dx\leq\exp\left(c_{+}\bar{a}^{2}\right),\label{eq:goal_chi2}
\end{equation}
where we recall that $p$ is the density of $\xi_1$. It holds
\begin{align*}
\int_{\mathbb{R}}\frac{p^{2}(x-\bar{a})}{p(x)}dx & =\int_{\mathbb{R}}\exp\left(\phi(x)-2\phi(x-\bar{a})\right)dx.
\end{align*}
 As $\phi$ is two times continuously differentiable, we have by Taylor
expansion, for all $x\in\mathbb{R}$,
\[
\phi(x)-\phi(x-\bar{a})\leq\bar{a}\phi^{\prime}(x-\bar{a})+c_{+}\frac{\bar{a}^{2}}{2}
\]
and
\[
\phi(x-2\bar{a})-\phi(x-\bar{a})\leq-\bar{a}\phi^{\prime}(x-\bar{a})+c_{+}\frac{\bar{a}^{2}}{2}.
\]
By adding the two previous inequalities, we get
\[
\phi(x)-2\phi(x-\bar{a})\leq-\phi(x-2\bar{a})+c_{+}\bar{a}^{2}.
\]
This gives
\begin{align*}
\int_{\mathbb{R}}\frac{p^{2}(x-\bar{a})}{p(x)}dx & =\int_{\mathbb{R}}\exp\left(\phi(x)-2\phi(x-\bar{a})\right)dx\\
 & \leq\exp\left(c_{+}\bar{a}^{2}\right)\int_{\mathbb{R}}\left(\exp\left(-\phi(x-2\bar{a})\right)\right)dx\\
 & =\exp\left(c_{+}\bar{a}^{2}\right).
\end{align*}
We proved \eqref{eq:goal_chi2}. In the case where $p$ is log-concave, it can be suitably approximated by infinitely differentiable densities, via the use of convultions with Gaussian random variables,
which completes the proof of Lemma \ref{lem:bound_chi2}.
\end{proof}

\printbibliography

\end{document}